\newcommand{\nocontentsline}[3]{}
\newcommand{\tocless}[2]{\bgroup\let\addcontentsline=\nocontentsline#1{#2}\egroup}
\theoremstyle{plain}
\theoremstyle{definition}
\theoremstyle{remark}
\numberwithin{equation}{section} \theoremstyle{corollary}
\begin{document}

\title{Global Hopf bifurcation for  differential-algebraic equations with   state dependent delay}
\author{Qingwen Hu}


\institute{Qingwen Hu \at
Department of Mathematical Sciences,  The University of Texas at Dallas,
800 West Campbell Road, Richardson, Texas, 75080 USA,\\
              \email{qingwen@utdallas.edu}           
}

\date{Received: date / Accepted: date}

\maketitle

\begin{abstract}

 We develop a global Hopf bifurcation theory for differential equations with a state-dependent delay  governed by an algebraic equation, using the $S^1$-equivariant degree. We apply the global Hopf bifurcation theory to  a model of genetic regulatory dynamics with threshold type state-dependent delay vanishing at the stationary state, for a description of the global continuation of the periodic oscillations. 

\keywords{state-dependent delay\and     Hopf bifurcation\and differential-algebraic equations \and $S^1$-equivariant degree\and regulatory dynamics}
\end{abstract}

\section{Introduction}\label{Sec3}
Consider the following  system of  differential-algebraic equations (DAEs) with state-dependent delay,
\begin{align}\label{SDDE-general-original}
\left\{
\begin{aligned}
\dot{x}(t) & =f(x(t),\,x(t-\tau(t)),\,\sigma),\\
 \tau(t)&=g(x(t),\,x(t-\tau(t)),\,\sigma),
\end{aligned}
\right.
\end{align} 
where  we assume that
\begin{itemize}
\item[\,\,S1)] The map $f$: $\mathbb{R}^N\times\mathbb{R}^N\times\mathbb{R}\ni (\theta_1,\theta_2,\sigma) \rightarrow f(\theta_1,\theta_2,\sigma)\in\mathbb{R}^N$
is   $C^2$ (twice continuously differentiable).
\item[\,\,S2)] The  map $g$: 
$\mathbb{R}^N\times\mathbb{R}^N\times\mathbb{R}\ni (\gamma_1,\,\gamma_2,\,\sigma)\rightarrow g(\gamma_1,\,\gamma_2,\,\sigma)\in\mathbb{R}$ is $C^2$.

\item[(S3)] $(\frac{\partial}{\partial\theta_1}+\frac{\partial}{\partial\theta_2})f(\theta_1,\,\theta_2,\,\sigma)|_{\sigma=\sigma_0,\,\theta_1=\theta_2=x_{\sigma_0}}$
is nonsingular, where  $\sigma_0\in\mathbb{R}$,
$(x_{\sigma_0},\tau_{\sigma_0})$ (or, for simplicity,
$(x_{\sigma_0},\tau_{\sigma_0},\,\sigma_0)$) is a stationary state
of (\ref{SDDE-general-original}). That is,
\[
f(x_{\sigma_0},\,x_{\sigma_0},\,\sigma_0)=0, \quad
g(x_{\sigma_0},\,x_{\sigma_0},\,\sigma_0)=\tau_{\sigma_0}.
\]
\end{itemize}
(S3) implies that there exists $\epsilon_0>0$ and a
$C^1$-smooth curve
$(\sigma_0-\epsilon_0,\,\sigma_0+\epsilon_0)\ni\sigma\mapsto
(x_{\sigma},\,\tau_{\sigma})\in\mathbb{R}^{N+1}$ such that
$(x_{\sigma},\,\tau_{\sigma})$ is the unique stationary state of
(\ref{SDDE-general-original}) in a small neighborhood of
$(x_{\sigma_0},\,\tau_{\sigma_0})$ for $\sigma $ close to
$\sigma_0.$   In the following, we write $\partial_i f=\frac{\partial}{\partial \theta_i}f$ for $i=1,\,2$, and similarly we define $\partial_i g$ for $i=1,\,2$.

 The state-dependent delay of system (\ref{SDDE-general-original}) arises in several applications. To mention a few, in  the model of turning processes  \cite{Turi-1},   the delay $\tau$ is the time duration for one around of cutting; In the echo control model \cite{Walther-echo}, the state-dependent delay is the echo traveling time between the object's  positions  when the sound is emitted and received. See \cite{HKWW} for a review. To model diffusion processes in genetic regulatory dynamics with time delay, we considered in \cite{Hu-diffusion} the following system:
\begin{align}\label{SDDE-general-1}
\left\{
\begin{aligned}
\frac{\mathrm{d}x(t)}{\mathrm{d}t} & =-\mu_m x(t)+f_0(y(t-\tau(t))),\\
  \frac{\mathrm{d}y(t)}{\mathrm{d}t} & =-\mu_p y(t)+ g_0(x(t-\tau(t))),\\
 \tau(t)&=\epsilon_0+c(x(t)-x(t-\tau(t))),
\end{aligned}
\right.
\end{align}where $f_0,\,g_0: \mathbb{R}\rightarrow\mathbb{R}$ are three times continuously differentiable functions; $\mu_m$, $\mu_p$, $c$ and $\epsilon_0$ are positive constants. The time delay   $\tau(t)=\epsilon_0+c(x(t)-x(t-\tau(t)))$ models the homogenization time of the substances produced in the regulatory processes. Since the equation for $\tau$ can be written as
\begin{align}
\int_{t-\tau(t)}^t\frac{1-c\dot{x}(s)}{\epsilon_0}\mathrm{d}s=1,\label{SDDE-eqn-5}
\end{align}
we call $\tau$ a threshold type state-dependent delay and we have shown in \cite{Hu-diffusion} that   using the time transformation  $t\mapsto\int_{0}^t(1-c\dot{x}(s))\mathrm{d}s$ system~(\ref{SDDE-general-1}) can be transformed into a system with constant delay and distributed delay under certain conditions. 
In such a case, the theory we developed in \cite{HU-JDE-1} is applicable to system~(\ref{SDDE-general-1}) for a local and global  Hopf bifurcation theory. However, if $\epsilon_0=0$ in (\ref{SDDE-general-1}) and the integral equation for $\tau$ becomes
\begin{align}
\int_{t-\tau(t)}^t (1-c\dot{x}(s))\mathrm{d}s=0,\label{SDDE-eqn-degenrate}
\end{align}
which cannot be employed to remove the state-dependent delay using the time transformation  $t\mapsto \int_{0}^t(1-c\dot{x}(s))\mathrm{d}s$. Thus the global  Hopf bifurcation theory developed in  \cite{HU-JDE-1} is no longer applicable. We remark that if we obtain a differential equation of  $\tau $ from $\tau(t)=\epsilon_0+c(x(t)-x(t-\tau))$ by taking derivatives on both sides, the resulting system will have a foliation of equilibria and at least one zero eigenvalue. The global  Hopf bifurcation theory developed in  \cite{MR2644135} is not applicable  either.  With these facts, we are motivated to develop  a global Hopf bifurcation theory for   system~(\ref{SDDE-general-original}) and apply it to an extended three dimensional Goodwin's model with state-dependent delay where the delay vanishes at equilibrium. (See system~(\ref{SDDE-hes1}) at section~\ref{Regulatory-Model}
for a brief description of the model.)

To start the discussion,  we denote by
$C(\mathbb{R};\,\mathbb{R}^N)$ the normed space of bounded continuous
functions from $\mathbb{R}$ to $\mathbb{R}^N$ equipped with the
usual supremum  norm $\|x\|=\sup_{t\in\mathbb{R}}|x(t)|$ for $x\in
C(\mathbb{R};\,\mathbb{R}^N)$, where $|\cdot|$ denotes the Euclidean
norm.  We also denote by
$C^1(\mathbb{R};\,\mathbb{R}^N)$ the normed space of continuously differentiable
bounded functions with bounded derivatives from $\mathbb{R}$ to $\mathbb{R}^N$ equipped with the
usual $C^1$  norm 
\[
 \|x\|_{C^1}=\max\{\sup_{t\in\mathbb{R}}|x(t)|,\,\sup_{t\in\mathbb{R}}|\dot{x}(t)|\}\] for  $ x\in
C^1(\mathbb{R};\,\mathbb{R}^N).$

 We wish to drop the part of the algebraic equation in (\ref{SDDE-general-original}) for the application of $S^1$-equivariant degree. We have
\begin{lemma}\label{Lemma-2-1} Assume that (S2) holds. The following statements are true:
\begin{enumerate}
\item[$i)$]
For every  $(x,\,\sigma)\in C(\mathbb{R};\mathbb{R}^N)\times\mathbb{R}$, where $x$ is periodic, there exists a function  $\tau: \mathbb{R}\rightarrow\mathbb{R}$ such that  $\tau(t)=g(x(t),\,x(t-\tau(t)),\,\sigma).$ 
\item[$ii)$] Let $x_0$ be a constant function in $C(\mathbb{R};\mathbb{R}^N).$  There exists an open ball  $O(\epsilon)\subset C^1(\mathbb{R};\mathbb{R}^N)$ centered at  $x_0$, $\epsilon>0$ such that for every periodic $x\in O(\epsilon)$ with period $p>0$,  there exists a unique periodic  $\tau\in C^1(\mathbb{R};\mathbb{R})$ with period $p$ such that  $\tau(t)=g(x(t),\,x(t-\tau(t)),\,\sigma),\,t\in\mathbb{R}.$
\end{enumerate}
\end{lemma}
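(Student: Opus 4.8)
The plan is to treat both parts as fixed-point statements for the operator $T_{x,\sigma}$ sending a candidate delay function $\tau$ to the new function $t\mapsto g(x(t),x(t-\tau(t)),\sigma)$, and to obtain $\tau$ as a fixed point.

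For part $i)$, I would argue pointwise. Fix $t\in\mathbb R$. Since $x$ is periodic (hence bounded), the range of $x$ is contained in a compact set $K\subset\mathbb R^N$, and by (S2) the scalar function $s\mapsto g(x(t),x(t-s),\sigma)$ is continuous and bounded on $\mathbb R$, say taking values in a bounded interval $[m,M]$. Thus the continuous map $s\mapsto g(x(t),x(t-s),\sigma)$ carries the interval $[m,M]$ into itself, and by the intermediate value theorem (or Brouwer in dimension one) it has a fixed point $s=\tau(t)$. Defining $\tau(t)$ to be such a fixed point for each $t$ gives a function $\tau:\mathbb R\to\mathbb R$ with $\tau(t)=g(x(t),x(t-\tau(t)),\sigma)$; no regularity of $\tau$ in $t$ is claimed, so measurable selection issues can be sidestepped by simply choosing any value.

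For part $ii)$, I would use the contraction mapping theorem to get existence, uniqueness, continuity and periodicity simultaneously. At the constant function $x_0$ we have $g(x_0(t),x_0(t-\tau),\sigma)=g(x_0,x_0,\sigma)=:\tau_*$, independent of $t$ and of $\tau$, so $\partial_2 g$ multiplied by $\dot x_0=0$ shows the ``derivative'' of $T_{x_0,\sigma}$ in $\tau$ vanishes. By continuity of $\partial_2 g$ (from (S2)) and of the $C^1$ norm, there is $\epsilon>0$ and a radius $r>0$ so that for every $x$ in the ball $O(\epsilon)\subset C^1$ centered at $x_0$ and every $\tau$ with $\|\tau\|_{C^1}\le r$, the Lipschitz constant of $\tau\mapsto g(x(t),x(t-\tau(t)),\sigma)$ in the sup norm is at most, say, $\tfrac12$, uniformly in $t$; shrinking $\epsilon$ further we also keep $T_{x,\sigma}$ mapping the closed ball of radius $r$ in $C$ into itself. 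Hence $T_{x,\sigma}$ has a unique fixed point $\tau\in C(\mathbb R;\mathbb R)$. Periodicity: if $x$ has period $p$, then $T_{x,\sigma}$ commutes with the shift $S_p:\phi(\cdot)\mapsto\phi(\cdot+p)$, so $S_p\tau$ is also a fixed point in the same ball, whence $S_p\tau=\tau$ by uniqueness, i.e. $\tau$ has period $p$. Finally, $C^1$-regularity of $\tau$ follows by implicit differentiation: from $\tau(t)=g(x(t),x(t-\tau(t)),\sigma)$ and $x\in C^1$, the candidate derivative is
\[
\dot\tau(t)=\frac{\partial_1 g\cdot\dot x(t)+\partial_2 g\cdot\dot x(t-\tau(t))}{1+\partial_2 g\cdot\dot x(t-\tau(t))},
\]
the denominator being bounded away from $0$ on $O(\epsilon)$ (again by shrinking $\epsilon$, since $\partial_2 g\cdot\dot x$ is small), and one checks this is consistent, i.e. $\tau$ is genuinely $C^1$ with bounded derivative; boundedness of $\dot\tau$ is immediate from the formula.

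The main obstacle is bookkeeping in part $ii)$: making precise the choice of the single $\epsilon$ (and the auxiliary radius $r$) so that, simultaneously, $T_{x,\sigma}$ is a self-map of a closed ball in $C$, is a uniform contraction there, and has the denominator $1+\partial_2 g\cdot\dot x(t-\tau(t))$ bounded away from zero — all uniformly in $t\in\mathbb R$ and in $x\in O(\epsilon)$. This is where the $C^1$ topology on $x$ is essential: it is precisely the smallness of $\dot x$ near the constant $x_0$ that kills the $\tau$-dependence of $g$ through the second argument and yields the contraction; in the mere $C^0$ topology the argument fails, which is consistent with the well-known loss of smoothness for state-dependent delay in the sup norm.
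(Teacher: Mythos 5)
Your proposal is correct and rests on exactly the same two estimates as the paper: for part $i)$ the paper also argues pointwise, intersecting the graph of $a\mapsto g(x(t),x(t-a),\sigma)$ (continuous and bounded because $x$ is periodic) with the diagonal, which is your intermediate-value/self-map argument in different clothing. For part $ii)$ the organization differs slightly. You package existence, uniqueness and periodicity into one application of the contraction mapping principle for $T_{x,\sigma}$, exploiting that the Lipschitz constant of $\tau\mapsto g(x(\cdot),x(\cdot-\tau(\cdot)),\sigma)$ in the sup norm is at most $\sup|\partial_2 g|\cdot\|\dot x\|_\infty\le L\epsilon<1$. The paper instead gets existence and $C^1$-regularity in one stroke from the implicit function theorem applied to $F(a,t)=a-g(x(t),x(t-a),\sigma)$ (using $\partial F/\partial a=1+\partial_2 g\cdot\dot x(t-a)$ close to $1$), and then proves uniqueness separately by the integral mean value theorem — the resulting inequality $\sup|\tau-\tau_0|\le L\epsilon\,\sup|\tau-\tau_0|$ is literally your contraction estimate; periodicity then follows from uniqueness in both treatments. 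Two small remarks. First, your contraction gives uniqueness a priori only within the invariant ball; to match the lemma's claim you should note that the same Lipschitz bound holds for \emph{any} two solutions, since $x(t-\tau(t))$ always lies in the range of $x$, so uniqueness is in fact unrestricted (as in the paper). Second, your $C^1$ step (``one checks this is consistent'') is the only soft spot: a continuous fixed point is not automatically differentiable, and the clean way to close this is precisely the paper's route — apply the implicit function theorem to $F(a,t)=0$ to get local $C^1$ solutions and identify them with your fixed point via uniqueness; boundedness of $\dot\tau$ then follows from your (correct) formula. Neither point is a real gap, just bookkeeping you already flagged.
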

\begin{proof} Fix an arbitrary $t\in\mathbb{R}$ and let $a=\tau(t)$. Consider the graphs of $h=a$ and $h=g(x(t),\,x(t-a),\,\sigma)$ in the $h$-$a$ plane.  The graphs must have an intersection since $x\in C(\mathbb{R};\mathbb{R}^N)$ is periodic and $h=g(x(t),\,x(t-a),\,\sigma)$ is continuous and bounded with respect to $a$. Since $t$ is arbitrary,  there exists a function  $\tau: \mathbb{R}\rightarrow\mathbb{R}$ with $a=\tau(t)$ such that  $\tau(t)=g(x(t),\,x(t-\tau(t)),\,\sigma).$ 

Define $F:\mathbb{R}^2\rightarrow\mathbb{R}$ by $F(a,\,t)=a-g(x(t),\,x(t-a),\,\sigma)$ where  $x\in C^1(\mathbb{R};\mathbb{R}^N)$ is periodic. By (S2) $F$ is continuously differentiable in  $(a,\,t)\in\mathbb{R}$. Moreover, $a=\tau(t)$ is such that $F(a,\,t)=0$. Note that   we have
\[
\frac{\partial F}{\partial a}=1+\partial_2 g(x(t),\,x(t-a),\,\sigma)\dot{x}(t-a).
\]
Since by (S2) $\partial_2 g(\gamma_1,\,\gamma_2,\,\sigma)$ is continuous, there exists open ball   $O(\epsilon)\subset C^1(\mathbb{R};\mathbb{R}^N)$ near $x_{\sigma}$ with $\epsilon>0$, such that for every  $x\in O(\epsilon)$ with $x$ periodic, we have $\frac{\partial F}{\partial a}\neq 0$. Indeed, we can choose $\epsilon>0$ small enough so that $\frac{\partial F}{\partial a}$ assumes values in a small neighborhood of 1 in $\mathbb{R}$.
 By the Implicit Function Theorem, the solution $a=\tau(t)$ of $a=g(x(t),\,x(t-a),\,\sigma)$ for $a$ is continuously differentiable with respect to $t$. Moreover, 
 by taking derivatives on both sides of $\tau(t)=g(x(t),\,x(t-\tau(t)),\,\sigma)$ we know that $\dot{\tau}$  is bounded in $\mathbb{R}$. That is, $\tau\in C^1(\mathbb{R};\mathbb{R})$.

 Next we show that we can choose $\epsilon>0$ small enough so that $\tau$ is unique. Suppose not. Then for every $\epsilon>0$, there exists $x\in O(\epsilon)$ with period $p$ such that $\tau$ is not unique. That is, there exists $\tau_0\neq \tau$ such that $\tau_0(t)=g(x(t),\,x(t-\tau_0(t)),\,\sigma).$
 Let $\epsilon_1>0$  and $L>0$ be such that
 \[
 L=\sup_{x,\,y\in O(\epsilon_1)}\sup_{t\in\mathbb{R}}|\partial_2g(x(t),\,y(t),\,\sigma)|.
 \]
 
  Then by the Integral Mean Value Theorem, for $x\in O(\epsilon)\subset O(\epsilon_1)$, we have 
 \begin{align*}
 0< &\sup_{t\in\mathbb{R}} |\tau(t)-\tau_0(t)|\\
 =& \sup_{t\in\mathbb{R}}|g(x(t),\,x(t-\tau(t)),\,\sigma)-g(x(t),\,x(t-\tau_0(t)),\,\sigma)|\\
 =& \sup_{t\in\mathbb{R}}\left|\int_0^1\partial_2g(x(t),\,x(t)+s(x(t-\tau(t))-x(t-\tau_0(t))),\,\sigma)\mathrm{d}s(x(t-\tau(t))-x(t-\tau_0(t)))\right|\\
 =& \sup_{t\in\mathbb{R}}\left|\int_0^1\partial_2g(x(t),\,x(t)+s(x(t-\tau(t))-x(t-\tau_0(t))),\,\sigma)\mathrm{d}s\right|\\
  &\times\sup_{t\in\mathbb{R}}\left|\int_0^1\dot{x}(t+\theta(\tau(t)-\tau_0(t)))\mathrm{d}\theta (\tau(t)-\tau_0(t))\right|\\
  \leq & L \epsilon \sup_{t\in\mathbb{R}} |\tau(t)-\tau_0(t)|,
 \end{align*}which leads to $L\epsilon\geq 1$ for every $\epsilon\in (0,\,\epsilon_1)$. This is a contradiction. Therefore, $\tau$ is uniquely determined by $x$ with $\tau(t)=g(x(t),\,x(t-\tau(t)),\,\sigma),\,t\in\mathbb{R}$.
 
 Lastly, we show that $\tau$ is  $p$-periodic. Indeed,  we have for every $t\in\mathbb{R}$,
 \[
 \tau(t+p)=g(x(t+p),\,x(t+p-\tau(t+p)),\,\sigma)=g(x(t),\,x(t-\tau(t+p)),\,\sigma),
 \]which combined with $\tau(t)=g(x(t),\,x(t-\tau(t)),\,\sigma)$ and the uniqueness of the solution for $\tau$ leads to $\tau(t)=\tau(t+p),\,t\in\mathbb{R}$.
 \qed
\end{proof}

By  Lemma~\ref{Lemma-2-1}, we notice that if $x\in C(\mathbb{R};\mathbb{R}^N)$ is periodic, the function $\tau$ satisfying $\tau(t)=g(x(t),\,x(t-\tau(t)),\,\sigma)$ is not necessarily continuous and neither is $f(x(t),\,x(t-\tau(t)),\,\sigma)$, while continuity is crucial for applying topological degree theory for a Hopf bifurcation. However, if $x\in C^1(\mathbb{R};\mathbb{R}^N)$ is periodic and is in a small neighborhood of a constant function, $\tau$ is continuously differentiable. The complexity is caused by the implicitly given $\tau$ in the algebraic equation of system~(\ref{SDDE-general-original}). If we replace the delayed term $x(t-\tau(t))$ with $x(t-\tau_\sigma)$ in the algebraic equation where $\tau_\sigma$ is the stationary state of $\tau$, we obtain the following system  with state-dependent delay,
\begin{align}\label{SDDE-general}
\left\{
\begin{aligned}
\dot{x}(t) & =f(x(t),\,x(t-\tau),\,\sigma),\\
 \tau(t)&=g(x(t),\,x(t-\tau_\sigma),\,\sigma),
\end{aligned}
\right.
\end{align} where $\tau$ is continuous if $x$ is continuous.  We notice that system~(\ref{SDDE-general}) share the same set of stationary states of system~(\ref{SDDE-general-original}) and it has interest on its own right since it also represents a  class of differential-algebraic equations with state-dependent delay. Due to the similarities between  systems~(\ref{SDDE-general})  and (\ref{SDDE-general-original}), we are interested to develop global Hopf bifurcation theories for both systems, while for systems~(\ref{SDDE-general}) we use the state space of $C(\mathbb{R};\mathbb{R}^N)$ and  for system~(\ref{SDDE-general-original}) we use $C^1(\mathbb{R};\mathbb{R}^N)$. Moreover, we show that if system~(\ref{SDDE-general}) undergoes Hopf bifurcation at $(x_{\sigma_0},\,\tau_{\sigma_0})$, then system~(\ref{SDDE-general-original}) also undergoes Hopf bifurcation at the same bifurcation point. Namely, we show that systems~(\ref{SDDE-general}) and (\ref{SDDE-general-original}) share the same set of Hopf bifurcation points.

We organize the remaining part of the paper as following: Using the framework for a Hopf bifurcation theory established in  \cite{MR2644135},  we develop a local Hopf bifurcation theory for  system~(\ref{SDDE-general}) in  Section~\ref{local-bifurcations} and for   system~(\ref{SDDE-general-original}) in  Section~\ref{local-bifurcations-new}. We develop global Hopf bifurcation theories for both systems~(\ref{SDDE-general-original}) and (\ref{SDDE-general})  in Section~\ref{global-bifurcation}. In Section~\ref{Regulatory-Model} we apply the developed local and global Hopf bifurcation theories to the prototype system~(\ref{SDDE-general-1}) with $\epsilon_0=0$.  We conclude the discussion in Section~\ref{Conclude}.

\section{Local Hopf bifurcation for system~(\ref{SDDE-general})}\label{local-bifurcations}

We begin with definitions of notations.  We denote by $V=C_{2\pi}(\mathbb{R};\,\mathbb{R}^{N})$ the space of $2\pi$-periodic continuous functions from  $\mathbb{R}$ to $\mathbb{R}^{N}$ equipped with the supremum norm. We denote by $C_{2\pi}^1(\mathbb{R};\,\mathbb{R}^{N})$ the Banach space of $2\pi$-periodic and continuously differentiable functions equipped with the $C^1$ norm.  

Note that if $x\in C(\mathbb{R};\mathbb{R}^N)$ is $p$-periodic, then  $\tau(t)=g(x(t),\,x(t-\tau_\sigma),\,\sigma),\,t\in\mathbb{R}$ is continuous and $p$-periodic. We call $x$ a solution if   $(x,\,\tau)$ satisfies system~(\ref{SDDE-general}). For a stationary state $x_0$ of system~(\ref{SDDE-general}) with the
parameter $\sigma _0$, we say that $(x_0,\,\sigma_0)$ is a Hopf
bifurcation point of system (\ref{SDDE-general}), if there exist a sequence
$\{(x_k,\,\sigma_k,\,T_k)\}_{k=1}^{+\infty}\subseteq C(\mathbb{R};
\mathbb{R}^{N})\times\mathbb{R}^2$ and $T_0> 0$ such that
$$
\lim_{k\rightarrow+\infty}\|(x_k,\,\sigma_k,\,
T_k)-(x_0,\,\sigma_0,\,T_0)\|_{C(\mathbb{R};
\mathbb{R}^{N})\times\mathbb{R}^2}=0,
$$
and $(x_k,\,\sigma_k)$ is a nonconstant $T_k$-periodic solution of
system (\ref{SDDE-general}).

Due to the nature of the same approach of using the $S^1$-equivariant degree, the presentation of the remaining part of this section is similar to that of \cite{MR2644135}, even though the systems in question are different. We   study  Hopf
bifurcation of (\ref{SDDE-general}) through the system obtained through the formal linearization \cite{cooke1996problem}. Namely, we freeze the state-dependent delay in system~(\ref{SDDE-general}) at its stationary state  and linearize the resulting differential equation of $x$ with constant delay at the stationary state.
For $\sigma\in
(\sigma_0-\epsilon_0,\,\sigma_0+\epsilon_0)$,  the following system is called the formal linearization of  system (\ref{SDDE-general}) at the stationary point
$x_{\sigma}$:
\begin{align}\label{eq-3-1}
\dot{x}(t) =  
\partial_1 f(\sigma)  \left( 
 x(t)-x_\sigma 
\right) + 
\partial_2 f(\sigma) \left(  x(t-\tau_\sigma)-x_\sigma
\right),
\end{align}
where
\begin{align*}
\partial_1 f(\sigma)&:=\partial_1 f(x_\sigma,
\,\tau_\sigma,\,\sigma), \,
\partial_2 f(\sigma):=\partial_2 f(x_\sigma,
\,\tau_\sigma,\,\sigma),\,\tau_\sigma=g(x_\sigma,\,x_\sigma,\,\sigma).
\end{align*}
Letting  $x(t)=e^{\omega t}\cdot C+x_\sigma$ with $C\in\mathbb{R}^{N}$, we obtain the following characteristic equation of the linear system corresponding to the inhomogeneous linear system (\ref{eq-3-1}),
\begin{align}\label{eq-3-2}
\det\Delta_{(x_\sigma,\,\sigma)}(\omega)=0,
\end{align}
where $\Delta_{(x_\sigma,\,\sigma)}(\omega)$ is an $N\times N$
complex matrix defined by
\begin{align}\label{character-matrix}
\Delta_{(x_\sigma,\,\sigma)}(\omega)=\omega I- \partial_1 f(\sigma) - \partial_2 f(\sigma)  e^{-\omega\tau_{\sigma}}.
\end{align}
A solution $\omega_0$ to the characteristic equation (\ref{eq-3-2}) is called
a characteristic value of the stationary state
$(x_{\sigma_0},\,\sigma_0)$. If  zero is not a characteristic value of $(x_{\sigma_0},\,\sigma_0)$, $(x_{\sigma_0},\, \sigma_0)$ is said to be a nonsingular stationary state. We say that $(x_{\sigma_0},\,\sigma_0)$ is a \textit{center} if the set of nonzero purely imaginary characteristic values of $(x_{\sigma_0},\,\sigma_0)$  is nonempty and discrete.  $(x_{\sigma_0},\,\sigma_0)$ is called an \emph{isolated center} if it is the only center in some neighborhood of $(x_{\sigma_0},\,\sigma_0)$ in $\mathbb{R}^{N}\times\mathbb{R}$.

If $(x_{\sigma_0}, \,\sigma_0)$ is
an isolated center of (\ref{eq-3-1}), then there exist $\beta_0>0$ and
 $\delta\in (0,\,\epsilon_0)$  such that
\[ \det \Delta_{ (x_{\sigma_0}, \,\sigma_0)}(i\beta_0)=0,
\]
and
\begin{align}\label{character-pure}
 \det \Delta_{ (x_{\sigma}, \,\sigma)}(i\beta)\neq 0,
\end{align}
 for any  $\sigma\in (\sigma_0-\delta,\,\sigma_0+\delta)$ and any $\beta\in (0,\,+\infty)\setminus\{\beta_0\}$.
Hence, we can
choose constants $\alpha_0=\alpha_0(\sigma_0,\,\beta_0)>0$ and
$\varepsilon=\varepsilon(\sigma_0,\,\beta_0)>0$ such that the
closure of the set $ \Omega:=(0,\,\alpha_0)\times
(\beta_0-\varepsilon,\,\beta_0+\varepsilon)\subset\mathbb{R}^2\cong\mathbb{C}$
contains no other zero of $\det \Delta_{
(x_{\sigma_0},\, \sigma_0)}(\cdot)$ in $\partial\Omega$.  We note that $\det \Delta_{ (x_{\sigma}, \, \sigma)}(\omega )$  is analytic in $\omega$ and is continuous in $\sigma$. If $\delta>0$ is small enough, then there is no zero of $\det \Delta_{(x_{{\sigma_0\pm\delta}}, \, {\sigma_0\pm\delta})}(\omega )$ in $\partial\Omega$.  So
we can define the number
\begin{align*}
\gamma_{\pm}(x_{\sigma_0},\, \sigma_0,
\,\beta_0)=\deg_B (\det \Delta_{(x_{{\sigma_0\pm\delta}}, \, {\sigma_0\pm\delta})}(\cdot ),\,\Omega),
\end{align*}
and the  crossing number of
$(x_{\sigma_0},\, \sigma_0, \,\beta_0)$  as
\begin{align}\label{crossingnumber}
\gamma (x_{\sigma_0},\,  \sigma_0,
\,\beta_0)=\gamma_{-}-\gamma_{+},
\end{align} where $\deg_B$ is the Brouwer degree in finite-dimensional spaces. See, e.g., \cite{kw}, for details. 

To formulate the Hopf bifurcation problem as a fixed point problem
in $C_{2\pi}(\mathbb{R};\mathbb{R}^N)$, we  normalize
the period of the $2\pi/\beta$-periodic solution $x$ of (\ref{SDDE-general}) and the associated $\tau\in C(\mathbb{R};\mathbb{R})$ by setting
$(x(t),\tau(t))=(y(\beta t),\,z(\beta t))$ and obtain
\begin{align}\label{eq-3-4}
\left(
\begin{array}{c}
\dot{y}(t)\\
 {z}(t)
\end{array}
\right)=\left(
\begin{array}{c}
\frac{1}{\beta}f(y(t),\,y(t-\beta z(t)),\,\sigma)\\
g(y(t),\,y(t-\beta z_\sigma),\,\sigma)
\end{array}
\right),
\end{align}where $(y_{\sigma},\,z_{\sigma})=(x_{\sigma},\,\tau_{\sigma})$.


Define $N_0: V\ni(y,\,\sigma,\,\beta) \times\mathbb{R}^2\rightarrow N_0(y,\,\sigma,\,\beta)\in V$   by
\begin{align}\label{N-0-definition}
 N_0(y,\,\sigma,\,\beta)(t)= f(y(t),\,y(t-\beta z(t)),\,\sigma). 
\end{align}Then the part of differential equations of system~(\ref{eq-3-4}) is rewritten as
\begin{align}\label{eq-3-4-new}
\dot{y}(t) = \frac{1}{\beta}N_0(y,\,\sigma,\,\beta)(t). 
\end{align}
Correspondingly, (\ref{eq-3-1}) is transformed into
\begin{align}\label{eq-3-5}
\dot{y}(t) =\frac{1}{\beta}\tilde{N}_0(y,\,\sigma,\,\beta)(t),
\end{align}
where $\tilde{N}_0:  V\ni(y,\,\sigma,\,\beta) \times\mathbb{R}^2\rightarrow \tilde{N}_0(y,\,\sigma,\,\beta)\in V$ is defined by
\begin{align*}
\tilde{N}_0(y,\,\sigma,\,\beta)(t)=  \partial_1 f(\sigma)   \left(y(t)-y_\sigma\right)
+  
\partial_2 f(\sigma) \left(y(t-\beta z_\sigma)-y_\sigma\right).
\end{align*}
We note that $y$ is $2\pi$-periodic if and only if $x$
is $(2\pi/\beta)$-periodic. 
 

Let $L_0: C_{2\pi}^1(\mathbb{R};\,\mathbb{R}^{N})\rightarrow V$ be defined by $L_0
y(t)=\dot{y}(t), t\in \mathbb{R}$ and $K: V\rightarrow \mathbb{R}^{N}$
 be defined by 
 \begin{align}\label{K-map}
 K(y)=\frac{1}{2\pi}\int_0^{2\pi}y(t) \mathrm{d} t.
 \end{align}

Define the map $\tilde{\mathcal{F}}:
V\times\mathbb{R}^2\rightarrow V$ by
\begin{align}\label{tilde-F}
&\widetilde{\mathcal{F}}(y,\,\sigma,\,\beta):=y-(L_0+K)^{-1}
\left[\frac{1}{\beta} \tilde{N}_0(y, \,\sigma,\,\beta)+K (y)\right].
\end{align}  We call the set defined by
\[
B(y_0,\,\sigma_0,\,\beta_0; r,\,\rho)=\{(y,\,\sigma,\,\beta):
\|y-y_\sigma\|<r,\,|(\sigma,\,\beta)-(\sigma_0,\,\beta_0)|<\rho\},
 \]  a {\it special neighborhood } of $\widetilde{\mathcal{F}}$, if it
   satisfies 
  \begin{enumerate}
  \item[i)] 
   $\widetilde{\mathcal{F}}(y,\,\sigma,\,\beta)\neq 0$ for every $(y,\,\sigma,\,\beta)\in \overline{B(y_0,\,\sigma_0,\,\beta_0; r,\,\rho)}$ with $|(\sigma,\,\beta)-(\sigma_0,\,\beta_0)|=\rho$ and $\|y-y_\sigma\|\neq 0$;
   \item[ii)] $(y_0,\,\sigma_0,\,\beta_0)$ is the only isolated center in 
$\overline{B(y_0,\,\sigma_0,\,\beta_0; r,\,\rho)}$. 
\end{enumerate}
Before we state and prove our local Hopf bifurcation theorem, we
cite some technical  Lemmas   from \cite{MR2644135} with necessary notational adaptions.
\begin{lemma}[\cite{MR2644135}\,]\label{bounded-inverse}
Let $L_0: C_{2\pi}^1(\mathbb{R};\,\mathbb{R}^{N})\rightarrow V$ be defined by $L_0
y(t)=\dot{y}(t), t\in \mathbb{R}$ and let $K: V\rightarrow \mathbb{R}^{N}$
 be defined at (\ref{K-map}). Then $L_0+K$ has a compact inverse
  $(L_0+K)^{-1}: V\rightarrow V$.
\end{lemma}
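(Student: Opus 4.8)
The plan is to produce $(L_0+K)^{-1}$ by an explicit integral formula and then invoke the Arzel\`a--Ascoli theorem. The conceptual point is that $L_0$ alone is \emph{not} invertible: its kernel is the space of constant functions and its range is the codimension-$N$ subspace $\{h\in V:\int_0^{2\pi}h(t)\,\mathrm{d}t=0\}$ of zero-mean functions; the averaging operator $K$ is designed to repair both defects simultaneously.

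To solve $(L_0+K)y=h$ for a given $h\in V$, I would put $c:=K(y)\in\mathbb{R}^N$, so that the equation becomes $\dot{y}(t)=h(t)-c$. Since $y$ must be $2\pi$-periodic, integrating over one period forces $\int_0^{2\pi}(h(t)-c)\,\mathrm{d}t=0$, i.e. $c=K(h)$; thus $c$ is uniquely determined by $h$. Hence $y(t)=y(0)+\int_0^t\bigl(h(s)-K(h)\bigr)\,\mathrm{d}s$, and this $y$ is automatically $C^1$ and $2\pi$-periodic. Imposing the remaining constraint $K(y)=c=K(h)$ then gives a single linear equation for the vector $y(0)\in\mathbb{R}^N$, namely $y(0)=K(h)-\frac{1}{2\pi}\int_0^{2\pi}\int_0^t\bigl(h(s)-K(h)\bigr)\,\mathrm{d}s\,\mathrm{d}t$, which is uniquely solvable. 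This shows $L_0+K:C_{2\pi}^1(\mathbb{R};\mathbb{R}^N)\to V$ is a bijection and yields a closed-form expression for $(L_0+K)^{-1}h$.

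From that formula one reads off that $(L_0+K)^{-1}$ is a bounded linear operator from $V$ into $C_{2\pi}^1(\mathbb{R};\mathbb{R}^N)$: indeed $\|\dot{y}\|_\infty\le 2\|h\|_\infty$, and estimating the iterated integral gives $\|y\|_\infty\le C\|h\|_\infty$ with $C$ independent of $h$, so $\|(L_0+K)^{-1}h\|_{C^1}\le C'\|h\|_\infty$. Finally, the inclusion $C_{2\pi}^1(\mathbb{R};\mathbb{R}^N)\hookrightarrow V$ is compact: a family bounded in the $C^1$ norm is uniformly bounded and, its derivatives being uniformly bounded, equicontinuous, hence relatively compact in $V$ by Arzel\`a--Ascoli. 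Composing the bounded map $(L_0+K)^{-1}:V\to C_{2\pi}^1(\mathbb{R};\mathbb{R}^N)$ with this compact embedding shows $(L_0+K)^{-1}:V\to V$ sends bounded sets to relatively compact sets, i.e. it is compact. I do not anticipate a genuine obstacle; the only step needing a little care is the bookkeeping showing that both constraints --- $c=K(h)$ and the linear equation for $y(0)$ --- are uniquely solvable, which is precisely the mechanism by which $L_0+K$, unlike $L_0$, becomes invertible.
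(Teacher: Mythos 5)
Your construction is correct and is essentially the standard argument for this lemma (which the paper itself does not reprove but imports from \cite{MR2644135}): one solves $(L_0+K)y=h$ explicitly, using periodicity to force $K(y)=K(h)$ and the mean-value constraint to pin down $y(0)$, obtains boundedness of $(L_0+K)^{-1}$ from $V$ into $C^1_{2\pi}(\mathbb{R};\mathbb{R}^N)$, and concludes compactness from the Arzel\`a--Ascoli embedding $C^1_{2\pi}\hookrightarrow V$. No gaps.
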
 
\begin{lemma}[\cite{MR2644135}\,]\label{bounded-N-0}
For any $\sigma\in\mathbb{R}$ and $\beta>0$, the map $N_0(\cdot,\,\sigma,\,\beta): V\rightarrow V$ defined by (\ref{N-0-definition}) is continuous.
\end{lemma}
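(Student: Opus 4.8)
The plan has two parts: first verify that $N_0(y,\sigma,\beta)\in V$ for each $y\in V$, and then prove sequential continuity of the map $y\mapsto N_0(y,\sigma,\beta)$. Fix $\sigma\in\mathbb{R}$ and $\beta>0$. For $y\in V$ write $z=z_y$ for the function occurring in the second line of (\ref{eq-3-4}), namely $z(t)=g(y(t),\,y(t-\beta z_\sigma),\,\sigma)$. The key structural point here — and the reason for passing from (\ref{SDDE-general-original}) to (\ref{SDDE-general}) — is that this $z$ is given \emph{explicitly} in terms of $y$, because the delayed argument carries the constant $z_\sigma=\tau_\sigma$ rather than $z(t)$. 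Since $y$ is continuous and $g$ is continuous by (S2), $t\mapsto z(t)$ is continuous; and $z(t+2\pi)=g(y(t+2\pi),\,y(t+2\pi-\beta z_\sigma),\,\sigma)=g(y(t),\,y(t-\beta z_\sigma),\,\sigma)=z(t)$ shows $z$ is $2\pi$-periodic. Hence $t\mapsto y(t-\beta z(t))$ is continuous and $2\pi$-periodic, and then $N_0(y,\sigma,\beta)(t)=f(y(t),\,y(t-\beta z(t)),\,\sigma)\in V$ by (S1).

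For continuity I would take $y_n\to y$ in $V$ and show $\|N_0(y_n,\sigma,\beta)-N_0(y,\sigma,\beta)\|\to0$. Set $z_n=z_{y_n}$. Since $y_n\to y$ uniformly, there is a bounded closed set $B\subset\mathbb{R}^N$ containing $y(t)$ and all $y_n(t)$ for $t\in\mathbb{R}$ and $n$ large; on $B\times B\times\{\sigma\}$ the $C^2$ maps $f$ and $g$ are uniformly continuous. Step one: from $|z_n(t)-z(t)|=|g(y_n(t),\,y_n(t-\beta z_\sigma),\,\sigma)-g(y(t),\,y(t-\beta z_\sigma),\,\sigma)|$ together with $\|y_n-y\|\to0$, uniform continuity of $g$ gives $\|z_n-z\|\to0$. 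Step two: estimate the delayed terms by
\[
\sup_{t\in\mathbb{R}}\bigl|y_n(t-\beta z_n(t))-y(t-\beta z(t))\bigr|\le \|y_n-y\|+\sup_{t\in\mathbb{R}}\bigl|y(t-\beta z_n(t))-y(t-\beta z(t))\bigr|;
\]
the first term vanishes in the limit, and the second does too, because $y$, being continuous and $2\pi$-periodic, is uniformly continuous on $\mathbb{R}$, while $\sup_{t}|\beta z_n(t)-\beta z(t)|=\beta\|z_n-z\|\to0$. Step three: apply uniform continuity of $f$ to $\bigl(y_n(t),\,y_n(t-\beta z_n(t))\bigr)\to\bigl(y(t),\,y(t-\beta z(t))\bigr)$, uniformly in $t$, to conclude.

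I expect the only non-routine step to be Step two: in $y_n(t-\beta z_n(t))$ both the outer function and the amount of shift vary with $n$, so uniform convergence $y_n\to y$ by itself is insufficient, and one must exploit uniform continuity of the \emph{limit} function $y$ — which is exactly what periodicity of the elements of $V$ buys. Steps one and three are routine bookkeeping with the uniform continuity of $g$ and $f$ on the relevant compact sets. This is, in effect, the argument of \cite{MR2644135}, transcribed to the present $N_0$.
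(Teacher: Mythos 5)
Your proof is correct; the paper itself gives no proof of this lemma (it is cited from \cite{MR2644135}), and your argument --- explicitness of $z$ from the frozen-delay algebraic equation, uniform convergence $z_n\to z$, the splitting of the delayed term using uniform continuity of the periodic limit function $y$, and uniform continuity of $f$ on a compact set --- is exactly the standard argument used there. No gaps.
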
 
\begin{lemma}[\cite{MR2644135}\,]\label{linear-eq-center}
If system $($\ref{eq-3-1}$\,)$ has a nonconstant periodic
solution  with  period $T>0$, then there exists an integer $m\geq 1,\,m\in\mathbb{N}$ such that $\pm im\,2\pi/T$ are characteristic values of the stationary state
$(x_{\sigma},\,\tau_{\sigma},\,\sigma)$.
\end{lemma}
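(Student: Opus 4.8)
\emph{Proof proposal.} The plan is to reduce (\ref{eq-3-1}) to a homogeneous linear delay equation with constant delay and then extract the characteristic values from the Fourier spectrum of the periodic solution. Set $A=\partial_1 f(\sigma)$, $B=\partial_2 f(\sigma)$, $r=\tau_\sigma$, and substitute $y(t)=x(t)-x_\sigma$ into (\ref{eq-3-1}); the affine terms cancel and $y$ solves $\dot y(t)=A\,y(t)+B\,y(t-r)$. If $x$ is $T$-periodic then so is $y$, and $y$ is nonconstant exactly when $x$ is. Since the right-hand side of this equation is continuous, $\dot y$ is continuous, so $y$ is $T$-periodic and of class $C^1$.

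Next I would use Fourier series with base frequency $\omega_0:=2\pi/T$: write $y(t)=\sum_{n\in\mathbb{Z}}c_n e^{i n\omega_0 t}$ with $c_n=\frac{1}{T}\int_0^T y(t)e^{-in\omega_0 t}\,\mathrm{d}t\in\mathbb{C}^N$, and note $c_{-n}=\overline{c_n}$ because $y$ is real-valued. By integration by parts together with $T$-periodicity, the $n$-th Fourier coefficient of $\dot y$ is $i n\omega_0 c_n$, and by the substitution $s=t-r$ the $n$-th Fourier coefficient of $t\mapsto y(t-r)$ is $e^{-in\omega_0 r}c_n$. Since $\dot y(t)=A\,y(t)+B\,y(t-r)$ for all $t$ and both sides are continuous $T$-periodic functions, their Fourier coefficients coincide, which yields for every $n\in\mathbb{Z}$
\[
\bigl(i n\omega_0 I-A-B e^{-in\omega_0 r}\bigr)c_n=\Delta_{(x_\sigma,\sigma)}(i n\omega_0)\,c_n=0 .
\]

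Finally, since $y$ is nonconstant there is some $n_0\in\mathbb{Z}\setminus\{0\}$ with $c_{n_0}\neq 0$; for this index the matrix $\Delta_{(x_\sigma,\sigma)}(i n_0\omega_0)$ has a nontrivial kernel, so $\det\Delta_{(x_\sigma,\sigma)}(i n_0\omega_0)=0$. Setting $m=|n_0|\geq 1$ and using that $A$ and $B$ are real, so that $\overline{\Delta_{(x_\sigma,\sigma)}(i m\omega_0)}=\Delta_{(x_\sigma,\sigma)}(-i m\omega_0)$, we get $\det\Delta_{(x_\sigma,\sigma)}(\pm i m\omega_0)=0$, i.e. $\pm i m\,2\pi/T$ are characteristic values of $(x_\sigma,\tau_\sigma,\sigma)$, as claimed. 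I expect the argument to be essentially routine; the only points requiring mild care are the computation of the Fourier coefficients of $\dot y$ by integration by parts (which is why one first records $y\in C^1$) and the uniqueness of Fourier coefficients for continuous periodic functions, while the delay enters only through the benign phase factor $e^{-in\omega_0 r}$ and causes no genuine difficulty.
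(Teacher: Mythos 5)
Your argument is correct and complete: reducing (\ref{eq-3-1}) to the homogeneous equation $\dot y=Ay(t)+By(t-\tau_\sigma)$, matching Fourier coefficients to get $\Delta_{(x_\sigma,\sigma)}(in\omega_0)c_n=0$, and using realness of $A,B$ to obtain the conjugate pair is exactly the standard proof. The paper itself offers no proof — it quotes the lemma from \cite{MR2644135} — and the Fourier-series argument you give is essentially the one used there, so there is nothing to fix.
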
 
For the purpose of establishing the $S^1$-degree on some special neighborhood near the stationary state, we have
\begin{lemma}\label{lemma-3-1}
Assume $($\mbox{{S1}}$\,)$--$($\mbox{{S3}}$\,)$ hold. Let $L_0$ and $K$ be as in
Lemma~\ref{bounded-inverse} and $\tilde{N}_0:V\times\mathbb{R}^2\rightarrow V$ be as in
$($\ref{eq-3-5}$\,)$.  Let $\tilde{\mathcal{F}}:
V\times\mathbb{R}^2\rightarrow V$ be defined at (\ref{tilde-F}).  If $B(y_0,\,\sigma_0,\,\beta_0; r,\,\rho)$ is a special neighborhood of $\widetilde{\mathcal{F}}$    with $0<\rho<\beta_0$, then there exists
$r'\in (0,\, r]$ such that the neighborhood
\[
B(y_0,\,\sigma_0,\,\beta_0;
r',\,\rho)=\{(u,\,\sigma,\,\beta):
\|y-y_\sigma\|<r',\,|(\sigma,\,\beta)-(\sigma_0,\,\beta_0)|<\rho\}
\] satisfies
\[
\dot{y}(t)\not\equiv \frac{1}{\beta}  f(y(t),y(t-\beta z(t)),\,\sigma)
\]
for $(y,\,\sigma,\,\beta)\in\overline{B(y_0,\,\sigma_0,\,\beta_0;
r',\,\rho)}$ with $y\neq y_\sigma$ and
$|(\sigma,\,\beta)-(\sigma_0,\beta_0)|=\rho$.
\end{lemma}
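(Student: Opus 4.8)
The plan is to argue by contradiction: recast the differential equation as a fixed point equation built from $N_0$, use the compactness of $(L_0+K)^{-1}$ from Lemma~\ref{bounded-inverse} to pass to a limit, and arrive at a zero of $\widetilde{\mathcal F}$ at a point of the parameter sphere forbidden by condition i) in the definition of a special neighborhood. First I introduce the nonlinear analogue of $\widetilde{\mathcal F}$,
\[
\mathcal F(y,\,\sigma,\,\beta):=y-(L_0+K)^{-1}\Big[\tfrac1\beta N_0(y,\,\sigma,\,\beta)+K(y)\Big],
\]
with $N_0$ as in (\ref{N-0-definition}). Since $(L_0+K)^{-1}$ inverts $y\mapsto\dot y+K(y)$ and maps $V$ into $C^1_{2\pi}(\mathbb R;\mathbb R^N)$, the equation $\mathcal F(y,\sigma,\beta)=0$ is equivalent to $\dot y(t)=\tfrac1\beta f(y(t),y(t-\beta z(t)),\sigma)$ with $z(t)=g(y(t),y(t-\beta z_\sigma),\sigma)$, and any such $y$ is automatically in $C^1_{2\pi}(\mathbb R;\mathbb R^N)$. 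If the conclusion failed, then, taking $r'=1/n$, I obtain a sequence $(y_n,\sigma_n,\beta_n)$ with $0<\|y_n-y_{\sigma_n}\|\le 1/n$, $|(\sigma_n,\beta_n)-(\sigma_0,\beta_0)|=\rho$, and $\mathcal F(y_n,\sigma_n,\beta_n)=0$. Along a subsequence, $\sigma_n\to\sigma_*$ and $\beta_n\to\beta_*\ge\beta_0-\rho>0$ with $|(\sigma_*,\beta_*)-(\sigma_0,\beta_0)|=\rho$; since $\sigma\mapsto y_\sigma$ is continuous by (S3), $y_{\sigma_n}\to y_{\sigma_*}$ and hence $y_n\to y_{\sigma_*}$ in $V$.

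I then normalize: set $w_n:=(y_n-y_{\sigma_n})/\|y_n-y_{\sigma_n}\|$, so $\|w_n\|=1$, and write $\Lambda(w,\sigma,\beta)(t):=\partial_1f(\sigma)w(t)+\partial_2f(\sigma)w(t-\beta z_\sigma)$, the linear map characterised by $\widetilde N_0(y,\sigma,\beta)=\Lambda(y-y_\sigma,\sigma,\beta)$. Using $(L_0+K)^{-1}c=c$ for constant functions $c$ and $K(y_n)-y_{\sigma_n}=\|y_n-y_{\sigma_n}\|\,K(w_n)$, the identity $\mathcal F(y_n,\sigma_n,\beta_n)=0$ rewrites as
\[
w_n=(L_0+K)^{-1}\Big[\tfrac1{\beta_n}\Lambda(w_n,\sigma_n,\beta_n)+K(w_n)+R_n\Big],\qquad R_n:=\frac{(N_0-\widetilde N_0)(y_n,\sigma_n,\beta_n)}{\beta_n\,\|y_n-y_{\sigma_n}\|}.
\]
The key step, which I expect to be the main obstacle, is to prove $R_n\to0$ in $V$. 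Writing $f$ as an integral of $\partial_1f,\partial_2f$ along the segment from $(y_n(t),y_n(t-\beta_nz_n(t)),\sigma_n)$ to $(y_{\sigma_n},y_{\sigma_n},\sigma_n)$ and using $f(y_{\sigma_n},y_{\sigma_n},\sigma_n)=0$, the quotient $\|y_n-y_{\sigma_n}\|^{-1}N_0(y_n,\sigma_n,\beta_n)$, after subtracting $\Lambda(w_n,\sigma_n,\beta_n)$, splits into two groups. The first carries factors $\partial_if(\cdot)-\partial_if(\sigma_n)$ evaluated on a segment that, because $y_n\to y_{\sigma_*}$ uniformly and $\sigma_n\to\sigma_*$, shrinks uniformly in $t$ to $(y_{\sigma_*},y_{\sigma_*},\sigma_*)$; this group tends to $0$ in $V$ by continuity of $\partial_if$ under (S1) and $\|w_n\|=1$. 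The second is $\partial_2f(\sigma_n)\big[w_n(\cdot-\beta_nz_n(\cdot))-w_n(\cdot-\beta_nz_{\sigma_n})\big]$, which I estimate using $z_n(t)-z_{\sigma_n}=g(y_n(t),y_n(t-\beta_nz_{\sigma_n}),\sigma_n)-g(y_{\sigma_n},y_{\sigma_n},\sigma_n)$, so that $\|z_n-z_{\sigma_n}\|\le C\|y_n-y_{\sigma_n}\|$ by the local Lipschitz property of $g$ under (S2), combined with the fact that $w_n$ is Lipschitz with constant $\|\dot y_n\|/\|y_n-y_{\sigma_n}\|$: multiplying these, the second group is $O\!\big(\beta_n\|\dot y_n\|\big)$, and $\|\dot y_n\|=\tfrac1{\beta_n}\sup_t|f(y_n(t),y_n(t-\beta_nz_n(t)),\sigma_n)|\to0$ because $y_n\to y_{\sigma_*}$ in $V$ and $f(y_{\sigma_*},y_{\sigma_*},\sigma_*)=0$. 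The delicate point is precisely that the normalization destroys any a priori equicontinuity of $w_n$; it is recovered only because every solution is automatically $C^1$ and $\dot y_n\to0$ uniformly, so the blow-up of the Lipschitz constant of $w_n$ is exactly compensated by the factor $\|y_n-y_{\sigma_n}\|$ in the Lipschitz bound for $z_n-z_{\sigma_n}$.

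With $R_n\to0$ established, the bracket is a bounded sequence in $V$ (since $\|w_n\|=1$ and $\beta_n$ is bounded away from $0$), so compactness of $(L_0+K)^{-1}$ yields, along a further subsequence, $w_n\to w_*$ in $V$ with $\|w_*\|=1$. Passing to the limit, using continuity of $(w,\sigma,\beta)\mapsto\tfrac1\beta\Lambda(w,\sigma,\beta)+K(w)$ and of $(L_0+K)^{-1}$, gives $w_*=(L_0+K)^{-1}\big[\tfrac1{\beta_*}\Lambda(w_*,\sigma_*,\beta_*)+K(w_*)\big]$. Since $\widetilde N_0(\cdot,\sigma,\beta)$ is affine with linear part $\Lambda(\cdot,\sigma,\beta)$ and $(L_0+K)^{-1}$ fixes constants, one checks $\widetilde{\mathcal F}(y_{\sigma_*}+u,\sigma_*,\beta_*)=u-(L_0+K)^{-1}\big[\tfrac1{\beta_*}\Lambda(u,\sigma_*,\beta_*)+K(u)\big]$ for all $u\in V$, so $\widetilde{\mathcal F}(y_{\sigma_*}+tw_*,\sigma_*,\beta_*)=t\,\widetilde{\mathcal F}(y_{\sigma_*}+w_*,\sigma_*,\beta_*)=0$ for every $t\in\mathbb R$. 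Choosing $t\in(0,r]$, the point $(y_{\sigma_*}+tw_*,\sigma_*,\beta_*)$ lies in $\overline{B(y_0,\sigma_0,\beta_0;r,\rho)}$ with $\|y-y_\sigma\|=t\neq0$ and $|(\sigma,\beta)-(\sigma_0,\beta_0)|=\rho$, while $\widetilde{\mathcal F}$ vanishes there — contradicting condition i) in the definition of a special neighborhood. Hence such an $r'\in(0,r]$ must exist.
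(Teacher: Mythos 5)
Your proof is correct, and while it follows the same broad strategy as the paper's (contradiction, normalization by $\|y_n-y_{\sigma_n}\|$, compactness, passage to a limiting linear problem at a point of the parameter sphere), it is organized differently enough to be worth comparing. The paper works at the level of the differential equation: it writes the normalized quotient $v_k$ as a solution of an integro-differential equation via the Integral Mean Value Theorem, derives from it a uniform bound on $\|\dot v_k\|$, applies Arzela--Ascoli, passes to the limit to obtain a nonconstant periodic solution of the frozen-delay linearization, and then invokes Lemma~\ref{linear-eq-center} to conclude that $(y_{\sigma^*},\sigma^*,\beta^*)$ is a center, contradicting condition ii) of the special neighborhood. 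You instead work with the fixed-point formulation, isolate the residual $R_n=(N_0-\tilde N_0)(y_n,\sigma_n,\beta_n)/(\beta_n\|y_n-y_{\sigma_n}\|)$, prove $R_n\to 0$, replace Arzela--Ascoli by compactness of $(L_0+K)^{-1}$ applied to a bounded bracket, and then use the linearity of $u\mapsto\widetilde{\mathcal F}(y_{\sigma_*}+u,\sigma_*,\beta_*)$ to produce a zero of $\widetilde{\mathcal F}$ with $\|y-y_\sigma\|\neq 0$ on the sphere $|(\sigma,\beta)-(\sigma_0,\beta_0)|=\rho$, contradicting condition i) directly and bypassing Lemma~\ref{linear-eq-center} (and the use of (S3) for nonconstancy). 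The treatment of the delay discrepancy also differs: the paper bounds $\|\dot v_k\|$ uniformly from the normalized equation and only needs $z_k\to z_{\sigma^*}$, whereas you allow $\mathrm{Lip}(w_n)=\|\dot y_n\|/\|y_n-y_{\sigma_n}\|$ to be a priori unbounded and cancel it against the Lipschitz estimate $\|z_n-z_{\sigma_n}\|=O(\|y_n-y_{\sigma_n}\|)$ together with $\|\dot y_n\|\to 0$; this is valid (and in fact the normalized equation shows $\|\dot w_n\|$ is uniformly bounded, so your caution is stronger than necessary). A further small advantage of your splitting is that the linear part $\Lambda(w_n,\sigma_n,\beta_n)$ involves only the constant delay $\beta_n z_{\sigma_n}$, so the limit passage there needs only uniform convergence of $w_n$ and uniform continuity of $w_*$, not equicontinuity of the whole sequence. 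Both arguments are sound; yours is slightly more self-contained at the final step, the paper's is slightly lighter on the hypotheses used for $g$ in the delay estimate.
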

\begin{proof}We prove by contradiction. 
Suppose the statement is not true, then for any $0<r'\leq r$, there exists
$(y,\,\sigma,\,\beta)$ such that
$0<\|y-y_\sigma\|<r',\,|(\sigma,\,\beta)-(\sigma_0,\,\beta_0)|=\rho$
and
\begin{align}\label{eq-3-13}
\dot{y}(t)=\frac{1}{\beta} 
f(y(t),y(t-\beta z(t)),\,\sigma) \textrm{ for } t\in\mathbb{R}.
\end{align}
Then there exists a sequence of nonconstant periodic solutions
$\{(y_k,\,\sigma_k,\,\beta_k)\}_{k=1}^{\infty}$
of (\ref{eq-3-13})
 such that
\begin{align}\label{eq-3-14}
&\lim_{k\rightarrow +\infty}\|y_k-y_{\sigma_k}\|=0,\,\,
|(\sigma_k,\,\beta_k)-(\sigma_0,\,\beta_0)|=\rho, \intertext{and}
&\dot{y}_k(t)=\frac{1}{\beta_k} 
f(y_k(t),y_k(t-\beta_k z_k(t)),\,\sigma_k) 
 \textrm{ for } t\in\mathbb{R},\label{eq-3-15}
\end{align}where $z_k$ is chosen according to $y_k$ in light of Lemma~\ref{Lemma-2-1} so that $(y_k,\,z_k)$ is a solution of system~(\ref{eq-3-4}).

Note that $0<\rho<\beta_0$ implies that $\beta_k\geq \beta_0-\rho>0$
for every $k\in\mathbb{N}$. Also, since the sequence
$\{\sigma_k,\,\beta_k\}_{k=1}^{\infty}$ belongs to a bounded
neighborhood of $(\sigma_0,\,\beta_0)$ in $\mathbb{R}^2$, there
exists a convergent subsequence, still denoted by
$\{(\sigma_k,\,\beta_k)\}_{k=1}^{\infty}$ for notational simplicity, that converges to
$(\sigma^*,\,\beta^*)$ so that
$|(\sigma^*,\,\beta^*)-(\sigma_0,\,\beta_0)|=\rho$ and $\beta^*>0$.  Then we have
\begin{align}\label{eq-3-16}
&\lim_{k\rightarrow +\infty}\|y_k-y_{\sigma_k}\|=0,\,\,
\lim_{k\rightarrow+\infty}|(\sigma_k,\,\beta_k)-(\sigma^*,\,\beta^*)|=0,
\intertext{and} &\dot{y}_k(t)=\frac{1}{\beta_k} 
f(y_k(t),y_k(t-\beta_k z_k(t)),\,\sigma_k)  \textrm{ for } t\in\mathbb{R}.\label{eq-3-17}
\end{align}
In the following we show that the system
\begin{align}
\dot{v}(t)=\frac{1}{\beta^*} \partial_1 f(\sigma^*) v(t)+\frac{1}{\beta^*} \partial_2 f(\sigma^*)  v(t-\beta^* z_{\sigma^*}),\label{eq-3-18}
\end{align}
has a nonconstant periodic solution which contradicts the
assumption that $(y_{\sigma_0},\,\sigma_0,\,\beta_0)$ is the only
center of (\ref{eq-3-5}) in $\overline{B(u_0,\,\sigma_0,\,\beta_0;
r,\,\rho)}$.

By (S1), $f$: $\mathbb{R}^N\times\mathbb{R}^N\times\mathbb{R}\ni (\theta_1,\theta_2,\sigma) \rightarrow f(\theta_1,\theta_2,\sigma)\in\mathbb{R}^N$
is $C^2$  in $(\theta_1,\,\theta_2)$. It follows
from the Integral Mean Value Theorem   that
\begin{align}
\dot{y}_k(t)=&\frac{1}{\beta_k}\int_0^1 \partial_1 f_k(\sigma_k,\,s)(t)  \mathrm{d}s(y_k(t)-y_{\sigma_k})\notag\\
& +\frac{1}{\beta_k}\int_0^1 
\partial_2 f_k(\sigma_k,\,s)(t)  \mathrm{d}s (y_k(t-\beta_k z_k(t))-y_{\sigma_k}),\label{eq-3-19}
\end{align}
where
\begin{align*}
\partial_1 f_k(\sigma_k,\,s)(t):&=
\partial_1 f(y_{\sigma_k}+s(y_k(t)-y_{\sigma_k}),y_{\sigma_k}+s(y_k(t-z_k(t))-y_{\sigma_k}),\,\sigma_k)
),\\
\partial_2 f_k(\sigma_k,\,s)(t):&=
\partial_2 f(y_{\sigma_k}+s(y_k(t)-y_{\sigma_k}),y_{\sigma_k}+s(y_k(t-z_k(t))-y_{\sigma_k}),\,\sigma_k)
).
\end{align*}
Put
\begin{align}{\label{eq-3-20}}
v_k(t)=\frac{y_k(t)-y_{\sigma_k}}{\|y_k-y_{\sigma_k}\|}.
\end{align}
Then  we have
\begin{align}{\label{eq-3-21}}
v_k(t-\beta_k z_k(t))=\frac{y_k(t-\beta_k
z_k(t))-y_{\sigma_k}}{\|y_k-y_{\sigma_k}\|}.
\end{align}
By (\ref{eq-3-19}) and (\ref{eq-3-21}) we have
\begin{align}
\dot{v}_k(t)=&\frac{1}{\beta_k}\int_0^1  \partial_1 f_k(\sigma_k,\,s)(t)  \mathrm{d}s\,v_k(t)+\frac{1}{\beta_k}\int_0^1 \partial_2 f_k(\sigma_k,\,s)(t)  \mathrm{d}s\, v_k(t-\beta_k z_k(t)).\label{eq-3-22}
\end{align}
We claim that there exists a convergent subsequence of
$\{v_k\}_{k=1}^{+\infty}$. Indeed, by (\ref{eq-3-14}) and system~(\ref{eq-3-4}), we know that $\{z_k,\,\beta_k\}_{k=1}^{+\infty}$ is uniformly bounded in $C(\mathbb{R};\mathbb{R})\times\mathbb{R}$ and hence
 $\lim_{t\rightarrow+\infty} [t-\beta_k z_k(t)]
=+\infty$. Then by (\ref{eq-3-20}) and
(\ref{eq-3-21}), we have
\[
\|v_k\|=1,\,\|v_k(\cdot-\beta_k
z_k(\cdot))\|=1.
\]
Recall that $\partial_i f(\sigma^*)$ and $\partial_i g(\sigma^*)$, $i=1,\,2$, are defined in (\ref{eq-3-1}).
By (\ref{eq-3-16}), we know that $(y_{\sigma_k}+s(y_k(t)-y_{\sigma_k}),y_{\sigma_k}+s(y_k(t-z_k(t))-y_{\sigma_k}),\,\sigma_k)$ converges to the stationary state $(y_{\sigma^*},\,y_{\sigma^*},\,\sigma^*)$ in $C(\mathbb{R};\mathbb{R}^{2N})\times\mathbb{R}$  uniformly for all   $s \in [0,1 ]$.
By (S1) we know that $f(\theta_1,\,\theta_2,\,\sigma)$ is $C^2$ in $(\theta_1,\,\theta_2,\,\sigma)$ and $\partial_1
f(\theta_1,\,\theta_2,\,\sigma)$ is $C^1$ in $\sigma$. Also, by (\ref{eq-3-14}), the sequence $\{u_k,\,\beta_k,\,\sigma_k\}_{k=1}^{+\infty}$ is uniformly bounded in $C(\mathbb{R};\mathbb{R}^{N+1})\times\mathbb{R}^2$.  Then there exists a constant $\tilde{L}_1>0$ so that
\begin{align*}
&\,\,|\partial_1
f_k(\sigma_k,\,s)(t)-\partial_1
f(\sigma^*)|\\
\leq &\,\, \tilde{L}_1 |(y_{\sigma_k}+s(y_k(t)-y_{\sigma_k}),y_{\sigma_k}+s(y_k(t-z_k(t))-y_{\sigma_k}),\,\sigma_k)-  (y_{\sigma^*},\,y_{\sigma^*},\,\sigma^*)|,
\end{align*}
for all $t\in\mathbb{R}$, $k\in\mathbb{N}$ and $s\in [0,\,1]$. Therefore, we have $\lim_{k\rightarrow+\infty}\|\partial_1 f_k(\sigma_k,\,s)-\partial_1
f(\sigma^*)\|=0$ uniformly for $s\in [0, 1]$. By the same argument we obtain that
\begin{align}\label{uniform-convergence-a} 
\lim_{k\rightarrow+\infty}\|\partial_1 f_k(\sigma_k,\,s)-\partial_1
f(\sigma^*)\|=0,\,\,
\lim_{k\rightarrow+\infty}\|\partial_2 f_k(\sigma_k,\,s)-\partial_2
f(\sigma^*)\|=0, 
\end{align}
uniformly for $s\in [0, 1]$. From (\ref{uniform-convergence-a}) we know that
$\|\partial_1 f_k(\sigma_k,\,s)\|$ and $\|\partial_2 f_k(\sigma_k,\,s)\|$ are both uniformly bounded for all
$k\in\mathbb{N}$ and $s\in [0,\,1]$. Then it follows from
(\ref{eq-3-22}) that there exists a constant $\tilde{L}_2>0$ such that $\|\dot{v}_k\|<\tilde{L}_2$
for any $k\in\mathbb{N}$. By the Arzela-Ascoli Theorem, there exists a
convergent subsequence $\{v_{k_j}\}_{j=1}^{+\infty}$ of
$\{v_k\}_{k=1}^{+\infty}$. That is, there exists $v^*\in\{v\in V:
\|v\|=1\}$ such that
\begin{align}{\label{eq-3-23}}
\lim_{j\rightarrow+\infty}\|v_{k_j}-v^*\|=0.
\end{align}
By the Integral Mean Value Theorem, we have
\begin{align}
& |v_{k_j}(t-\beta_{k_j} z_{k_j}(t))-v_{k_j}(t-\beta^* z_{\sigma^*})|\notag\\
= & \left|\int_0^1 \dot{v}_{k_j}(t-\theta(\beta_{k_j} z_{k_j}(t)-\beta^*
z_{\sigma^*}))\mathrm{d}\theta(\beta_{k_j} z_{k_j}(t)-\beta^*
z_{\sigma^*})\right|\notag\\
\leq & \|\dot{v}_{k_j}\|\cdot |\beta_{k_j} z_{k_j}(t)-\beta^*
z_{\sigma^*}|\notag\\
\leq & \tilde{L}_2 (\beta_{k_j}
|z_{k_j}(t)-z_{\sigma^*}|+|\beta_{k_j}-\beta^*|z_{\sigma^*}).\label{eq-3-24}
\end{align}
 By (\ref{eq-3-16}) and (\ref{eq-3-24}) we
have
\begin{align}\label{eq-3-25}
\lim_{j\rightarrow+\infty}\|v_{k_j}(\cdot-\beta_{k_j} z_{k_j}(\cdot))-v_{k_j}(\cdot-\beta^*
z_{\sigma^*})\|=0.
\end{align}
Therefore, it follows from (\ref{eq-3-23}) and  (\ref{eq-3-25}) that
\begin{align}\label{eq-3-26}
\lim_{j\rightarrow+\infty}\|v_{k_j}(\cdot-\beta_{k_j} z_{k_j}(\cdot))-v^*(\cdot-\beta^*
z_{\sigma^*})\|=0.
\end{align}
It follows from (\ref{eq-3-16}), (\ref{uniform-convergence-a}), (\ref{eq-3-23}) and (\ref{eq-3-26})
that the right hand side of (\ref{eq-3-22}) converges uniformly to
the right hand side of (\ref{eq-3-18}). Therefore, $v^*$ is differentiable and  we have
\begin{align*}
\lim_{k\rightarrow+\infty}|\dot{v}_k(t)-\dot{v}^*(t)|=0,
\end{align*}
and
\begin{align}
\dot{v}^*(t)=\frac{1}{\beta^*} \partial_1 f(\sigma^*) v^*(t)+\frac{1}{\beta^*} \partial_2 f(\sigma^*)  v^*(t-\beta^* z_{\sigma^*}).\label{eq-3-27}
\end{align}
Since by (S3) the
matrix
$
\partial_1 f(\sigma^*)+\partial_2 f(\sigma^*),
 $ is nonsingular, $v=0$ is the only constant solution of
(\ref{eq-3-27}). Also, we have $v^*\in \{v\in V: \|v\|=1\}$,  $\|v^*\|\neq 0$. Therefore,
$(v^*(t),\,\sigma^*,\,\beta^*)$ is a nonconstant
periodic solution of the linear equation (\ref{eq-3-27}). Then by Lemma~\ref{linear-eq-center}
$(y_{\sigma^*},\,\sigma^*,\,\beta^*)$ is also a center of
(\ref{eq-3-5})  in $\overline{B(y_0,\,\sigma_0,\,\beta_0;
r,\,\rho)}$. This contradicts the assumption that
$B(y_0,\,\sigma_0,\,\beta_0; r,\,\rho)$ is a special neighborhood
of (\ref{eq-3-4}). This completes the proof.\hfill{ }\qed
\end{proof}
To apply the homotopy argument of $S^1$-degree, we show the following
\begin{lemma}\label{homotopy-lemma}
Assume (S1)--(S3) hold. Let $L_0$, $K$, $\tilde{N}_0$, $\tilde{\mathcal{F}}$ be as in Lemma~\ref{lemma-3-1}  and  $N_0: V\times\mathbb{R}^2\rightarrow V$ be as in $($\ref{eq-3-4}$\,)$. Define the map $\mathcal{F}:
V\times\mathbb{R}^2\rightarrow V$  by
\begin{align*}
&\mathcal{F}(y,\,\sigma,\,\beta):=y-(L_0+K)^{-1}\left[\frac{1}{\beta}N_0(y,\,\sigma,\,\beta)+K(y)\right].
\end{align*}
If $\mathcal{U}=B(y_0,\,\sigma_0,\,\beta_0; r,\,\rho)$ $\subseteq
V\times\mathbb{R}^2$ is a special neighborhood of
$\tilde{\mathcal{F}}$  with $0<\rho<\beta_0$, then there exists
$r'\in (0,\,r]$ such that
$\mathcal{F}_{\theta}=(\mathcal{F},\,\theta)$ and
$\tilde{\mathcal{F}}_{\theta}=(\tilde{\mathcal{F}},\,\theta)$  are
homotopic on $\overline{B(y_0,\,\sigma_0,\,\beta_0; r',\,\rho)}$,
where $\theta$ is a completing function (or Ize's function) defined on
$\overline{B(y_0,\,\sigma_0,\,\beta_0; r',\,\rho)}$ which satisfies
\begin{enumerate}
\item[i)] $\theta(y_\sigma,\,\sigma,\,\beta)=-|(\sigma,\,\beta)-(\sigma_0,\,\beta_0)|$ if $(y_\sigma,\,\sigma,\,\beta)\in \bar{\mathcal{U}};$
\item[ii)] $\theta(y,\,\sigma,\,\beta)=r'$ if $\|y-y_\sigma\|=r'$.
\end{enumerate}
\end{lemma}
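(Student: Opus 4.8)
The plan is to connect $\widetilde{\mathcal{F}}$ and $\mathcal{F}$ by the straight-line homotopy while keeping the completing function $\theta$ fixed, and to show that for $r'$ small enough this homotopy does not vanish on the boundary of the shrunken neighborhood. Set $H_\kappa:=(1-\kappa)\widetilde{\mathcal{F}}+\kappa\mathcal{F}$ and $H_\kappa^\theta:=(H_\kappa,\theta)$ for $\kappa\in[0,1]$, so $H_0^\theta=\widetilde{\mathcal{F}}_\theta$ and $H_1^\theta=\mathcal{F}_\theta$. By Lemma~\ref{bounded-inverse} the operator $(L_0+K)^{-1}$ is compact and by Lemma~\ref{bounded-N-0} the map $N_0(\cdot,\sigma,\beta)$ is continuous, so $\mathcal{F}$ (like $\widetilde{\mathcal{F}}$) is a compact vector field on $V\times\mathbb{R}^2$ and $((y,\sigma,\beta),\kappa)\mapsto H_\kappa^\theta$ is continuous; moreover, since $f$ and $g$ are autonomous, each of $\mathcal{F},\widetilde{\mathcal{F}}$, hence $H_\kappa^\theta$, is $S^1$-equivariant for the shift action of $S^1$ on $V$ (acting trivially on the $\theta$-coordinate, $\theta$ itself being $S^1$-invariant). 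Thus $\{H_\kappa^\theta\}$ is an admissible $S^1$-homotopy and the only thing to check is that it is nowhere zero on $\partial B(y_0,\sigma_0,\beta_0;r',\rho)$.

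The boundary consists of points with $\|y-y_\sigma\|=r'$ and of points with $|(\sigma,\beta)-(\sigma_0,\beta_0)|=\rho$ and $\|y-y_\sigma\|\le r'$. On the first set, property ii) of $\theta$ gives $\theta(y,\sigma,\beta)=r'\neq 0$; on the subset of the second where $y=y_\sigma$, property i) of $\theta$ gives $\theta(y_\sigma,\sigma,\beta)=-\rho\neq 0$; in both situations $H_\kappa^\theta\neq 0$ for every $\kappa$. Hence it remains to produce $r'\in(0,r]$ such that
\[
H_\kappa(y,\sigma,\beta)\neq 0 \ \text{ for all }\kappa\in[0,1] \ \text{ whenever } 0<\|y-y_\sigma\|\le r' \text{ and } |(\sigma,\beta)-(\sigma_0,\beta_0)|=\rho.
\]
Unwinding $(L_0+K)^{-1}$, the equation $H_\kappa(y,\sigma,\beta)=0$ is equivalent to $y$ being a $2\pi$-periodic solution (automatically in $C^1_{2\pi}$, since the right-hand side is continuous) of $\dot y(t)=\frac{1}{\beta}\big[(1-\kappa)\widetilde{N}_0(y,\sigma,\beta)(t)+\kappa N_0(y,\sigma,\beta)(t)\big]$; the endpoints $\kappa=0,1$ are exactly the statements that $\widetilde{\mathcal{F}}$ (by condition i) defining a special neighborhood) and $\mathcal{F}$ (by Lemma~\ref{lemma-3-1}) are nonzero on that part of the boundary, so the displayed claim handles the whole homotopy at once.

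The claim is proved by contradiction, repeating the argument of Lemma~\ref{lemma-3-1} with the parameter $\kappa$ carried along. Assuming it fails, pick $\kappa_k\in[0,1]$ and $(y_k,\sigma_k,\beta_k)$ with $0<\|y_k-y_{\sigma_k}\|\le 1/k$, $|(\sigma_k,\beta_k)-(\sigma_0,\beta_0)|=\rho$ and $H_{\kappa_k}(y_k,\sigma_k,\beta_k)=0$, where $z_k(t)=g(y_k(t),y_k(t-\beta_k z_{\sigma_k}),\sigma_k)$ is the delay associated with $y_k$ through the algebraic equation of (\ref{eq-3-4}). Normalize $v_k:=(y_k-y_{\sigma_k})/\|y_k-y_{\sigma_k}\|$ (so $\|v_k\|=1$) and apply the Integral Mean Value Theorem to $f$, as in (\ref{eq-3-19})--(\ref{eq-3-22}), to turn the equation for $y_k$ into a linear delay equation for $v_k$ whose coefficients are $\int_0^1\partial_i f_k(\sigma_k,s)(\cdot)\,\mathrm{d}s$ (from the $\kappa_k N_0$ term, with delay $\beta_k z_k(\cdot)$) and $\partial_i f(\sigma_k)$ (from the $(1-\kappa_k)\widetilde{N}_0$ term, with delay $\beta_k z_{\sigma_k}$). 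Passing to subsequences, $\kappa_k\to\kappa^*\in[0,1]$ and $(\sigma_k,\beta_k)\to(\sigma^*,\beta^*)$ with $|(\sigma^*,\beta^*)-(\sigma_0,\beta_0)|=\rho$ and $\beta^*\ge\beta_0-\rho>0$; since $0<\rho<\beta_0$ gives a uniform lower bound on the $\beta_k$ and $y_k\to y_{\sigma^*}$ uniformly, (S1)--(S2) yield, as in (\ref{uniform-convergence-a}), the uniform convergences $\partial_i f_k(\sigma_k,s)\to\partial_i f(\sigma^*)$, $\partial_i f(\sigma_k)\to\partial_i f(\sigma^*)$ and $z_k\to z_{\sigma^*}=\tau_{\sigma^*}$, so $\{\|\dot v_k\|\}$ is bounded and, by Arzela-Ascoli, $v_{k_j}\to v^*$ in $V$ with $\|v^*\|=1$ (together with $v_{k_j}(\cdot-\beta_{k_j}z_{k_j}(\cdot))\to v^*(\cdot-\beta^* z_{\sigma^*})$ and $v_{k_j}(\cdot-\beta_{k_j}z_{\sigma_{k_j}})\to v^*(\cdot-\beta^* z_{\sigma^*})$). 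Letting $j\to\infty$, the weights $1-\kappa^*$ and $\kappa^*$ recombine to $1$ because $\widetilde{N}_0$ and $N_0$ have the same formal linearization at the stationary state, so $v^*$ is differentiable and satisfies (\ref{eq-3-27}); by (S3) the only constant solution of (\ref{eq-3-27}) is $0$, hence $v^*$ (with $\|v^*\|=1$) is a nonconstant periodic solution, and Lemma~\ref{linear-eq-center} makes $(y_{\sigma^*},\sigma^*,\beta^*)$ a center of (\ref{eq-3-5}) in $\overline{B(y_0,\sigma_0,\beta_0;r,\rho)}$ other than $(y_0,\sigma_0,\beta_0)$ (as $\rho>0$), contradicting condition ii) defining a special neighborhood. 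With such an $r'$ in hand, homotopy invariance of the $S^1$-degree completes the proof.

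The main obstacle is precisely this last limiting argument, but it is essentially a transcription of the proof of Lemma~\ref{lemma-3-1}; the only genuinely new ingredient is that the homotopy parameter $\kappa$ disappears in the limit, which is immediate because $N_0$ and $\widetilde{N}_0$ share their linearization at $(x_{\sigma_0},\tau_{\sigma_0})$. The remaining points (the boundary decomposition, the continuity and equivariance of $H_\kappa^\theta$, and the compact-vector-field structure) are routine given Lemmas~\ref{bounded-inverse}--\ref{linear-eq-center} and the defining properties of $\theta$.
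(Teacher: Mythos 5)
Your proposal is correct and follows essentially the same route as the paper: a linear homotopy $(1-\kappa)\tilde{\mathcal{F}}+\kappa\mathcal{F}$ paired with the fixed completing function, reduction of the nonvanishing condition to the portion of the boundary with $|(\sigma,\beta)-(\sigma_0,\beta_0)|=\rho$ and $0<\|y-y_\sigma\|\le r'$, and then a contradiction argument that normalizes $y_k-y_{\sigma_k}$, applies the Integral Mean Value Theorem and Arzela--Ascoli, and passes to the limit (where the weights $\kappa^*$ and $1-\kappa^*$ recombine) to produce a nonconstant periodic solution of the formal linearization, contradicting the isolated-center property via Lemma~\ref{linear-eq-center}. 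The only cosmetic difference is that you make the admissibility check on the two boundary pieces and the $S^1$-equivariance of the homotopy explicit, which the paper leaves implicit.
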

\begin{proof} Since $\mathcal{U}=B(y_0,\,\sigma_0,\,\beta_0; r,\,\rho)$ $\subseteq
V\times\mathbb{R}^2$ is a special neighborhood of
$\tilde{\mathcal{F}}$  with $0<\rho<\beta_0$, then by Lemma~\ref{lemma-3-1}, both $\mathcal{F}_{\theta}=(\mathcal{F},\,\theta)$ and
$\tilde{\mathcal{F}}_{\theta}=(\mathcal{F},\,\theta)$ are $\mathcal{U}$-admissible. That is, the $S^1$
degrees of $\mathcal{F}_{\theta}$ and $\tilde{\mathcal{F}}_{\theta}$ are well-defined on  $\mathcal{U}$.

Suppose, for contradiction, that the conclusion is not true. Then for any $r'\in (0,\,r]$,
$\mathcal{F}_{\theta}=(\mathcal{F},\,\theta)$ and
$\tilde{\mathcal{F}}_{\theta}=(\tilde{\mathcal{F}},\,\theta)$  are not homotopic
on $\overline{B(y_0,\,\sigma_0,\,\beta_0; r',\,\rho)}$. That is, any homotopy map between
$\mathcal{F}_{\theta}$ and $\tilde{\mathcal{F}}_{\theta}$ has a zero on the boundary of $\overline{B(y_0,\,\sigma_0,\,\beta_0; r',\,\rho)}$. In particular, the linear homotopy $h(\cdot,\,\alpha):=\alpha \mathcal{F}_{\theta}+(1-\alpha)\tilde{\mathcal{F}}_{\theta}=(\alpha \mathcal{F} +(1-\alpha)\tilde{\mathcal{F}},\theta) $ has a zero on the boundary of $\overline{B(y_0,\,\sigma_0,\,\beta_0; r',\,\rho)}$, where $\alpha\in [0,\,1]$.

Note that $\theta(y,\,\sigma,\,\beta)>0$ if
$\|y-y_\sigma\|=r'$.
Then, there exist
$(y,\,\sigma,\,\beta)$ and $\alpha\in [0,\,1]$ such that
$\|y-y_\sigma\|<r',\,|(\sigma,\,\beta)-(\sigma_0,\,\beta_0)|=\rho$
and
\begin{align}
H(y,\,\sigma,\,\beta,\,\alpha):=\alpha \mathcal{F} +(1-\alpha)\tilde{\mathcal{F}}=0.\label{eq-3-28}
\end{align}
Since $r'>0$ is arbitrary in the interval $(0,\,r]$, there exists a
nonconstant sequence
$\{(y_k,\,\sigma_k,\,\beta_k,\,\alpha_k)\}_{k=1}^{\infty}$ of
solutions of (\ref{eq-3-28}) such that
\begin{align}
&      \lim_{k\rightarrow +\infty}\|y_k-y_{\sigma_k}\|=0,\,\,
       |(\sigma_k,\,\beta_k)-(\sigma_0,\,\beta_0)|=\rho,\, 0
       \leq \alpha_k\leq 1,
       \label{eq-3-29}
\intertext{and} &      H(y_k,\,\sigma_k,\,\beta_k,\,\alpha_k)=0, \mbox{ for all } k\in\mathbb{N}.
\label{eq-3-30}
\end{align}
Note that $0<\rho<\beta_0$ implies that $\beta_k\geq \beta_0-\rho>0$
for every $k\in\mathbb{N}$. From (\ref{eq-3-29}) we know that
$\{(\sigma_k,\,\beta_k,\,\alpha_k)\}_{k=1}^{\infty}$ belongs to a
compact subset of $\mathbb{R}^3$. Therefore, there exist  a
convergent subsequence, denoted for notational simplicity by
$\{(\sigma_k,\,\beta_k,\,\alpha_k)\}_{k=1}^{\infty}$ without loss of
generality, and $(\sigma^*,\,\beta^*,\,\alpha^*)\in \mathbb{R}^3$
such that $\beta^*\geq \beta_0-\rho>0$, $\alpha^*\in [0,\,1]$ and
\begin{align}\label{eq-3-31}
\lim_{k\rightarrow+\infty}
|(\sigma_k,\,\beta_k,\,\alpha_k)-(\sigma^*,\,\beta^*,\,\alpha^*)|=0.
\end{align}
By the same token for the proof of Lemma~\ref{lemma-3-1}, we show that the
system
\begin{align}
\dot{v}(t)=\frac{1}{\beta^*} \partial_1 f(\sigma^*)  v(t)+\frac{1}{\beta^*} \partial_2 f(\sigma^*) v(t-\beta^* z_{\sigma^*})\label{eq-3-32}
\end{align}
with $\partial_i f(\sigma^*),\, \partial_i g(\sigma^*),  i=1,\,2,$ defined at (\ref{eq-3-1}), has a nonconstant periodic solution which contradicts the
assumption that $B(u_0,\,\sigma_0,\,\beta_0; r,\,\rho)$ is a special neighborhood which contains an
isolated center of (\ref{eq-3-5}).

By (\ref{eq-3-30}), we know that the subsequence
$\{(y_k,\,\sigma_k,\,\beta_k,\,\alpha_k)\}_{k=1}^{\infty}$
satisfies
\begin{align}\label{eq-3-33}
H(y_k,\,\sigma_k,\,\beta_k,\,\alpha_k)=0.
\end{align}
By (S1), $f$: $\mathbb{R}^N\times\mathbb{R}^N\times\mathbb{R}\ni (\theta_1,\theta_2,\sigma) \rightarrow f(\theta_1,\theta_2,\sigma)\in\mathbb{R}^N$
is $C^2$  in $(\theta_1,\,\theta_2)$. Then it follows
from the Integral Mean Value Theorem and from (\ref{eq-3-33}) that
\begin{align}
\dot{u}_k(t)=&\frac{\alpha_k}{\beta_k}\int_0^1  \partial_1 f_k(\sigma_k,\,s)(t)  \mathrm{d}s (y_k(t)-y_{\sigma_k}) \notag\\
&+\frac{\alpha_k}{\beta_k}\int_0^1  \partial_2 f_k(\sigma_k,\,s)(t)  \mathrm{d}s 
(y_k(t-\beta_k z_k(t))-y_{\sigma_k}) \notag\\
&+ \frac{1-\alpha_k}{\beta_k}\int_0^1 \partial_1 f_k(\sigma_k,\,s)(t) \mathrm{d}s(y_k(t)-y_{\sigma_k})\notag\\
& +\frac{1-\alpha_k}{\beta_k} \int_0^1\partial_2 f_k(\sigma_k,\,s)(t)\mathrm{d}s (y_k(t-\beta_k z_{\sigma_k})-y_{\sigma_k}), \label{eq-3-34}
\end{align}
where
\begin{align*}
\partial_1 f_k(\sigma_k,\,s)(t):&=
\partial_1 f(y_{\sigma_k}+s(y_k(t)-y_{\sigma_k}),y_{\sigma_k}+s(y_k(t-\beta z_k(t))-y_{\sigma_k}),\,\sigma_k)
),\\
\partial_2 f_k(\sigma_k,\,s)(t):&=
\partial_2 f(y_{\sigma_k}+s(y_k(t)-y_{\sigma_k}),y_{\sigma_k}+s(y_k(t-\beta z_k(t))-y_{\sigma_k}),\,\sigma_k)
).\end{align*}
Put
\begin{align}{\label{eq-3-35}}
v_k(t)=\frac{y_k(t)-y_{\sigma_k}}{\|y_k-y_{\sigma_k}\|}.
\end{align}
Then we have
\begin{align}{\label{eq-3-37}}
v_k(t-\beta_k z_k(t))=\frac{y_k(t-\beta_k
z_k(t))-y_{\sigma_k}}{\|y_k -y_{\sigma_k}\|}.
\end{align}
 By  (\ref{eq-3-34})
and (\ref{eq-3-37}), we have
\begin{align}\label{eq-3-38}
\dot{v}_k(t)=&\frac{\alpha_k}{\beta_k}\int_0^1 
\partial_1 f_k(\sigma_k,\,s) (t) \mathrm{d}s\, v_k(t)\notag\\
&+\frac{\alpha_k}{\beta_k}\int_0^1 \partial_2 f_k(\sigma_k,\,s) (t)\mathrm{d}s\,v_k(t-\beta_k z_{\sigma_k})\notag\\
&+ \frac{1-\alpha_k}{\beta_k}\int_0^1 \partial_1 f_k(\sigma_k,\,s)(t)  \mathrm{d}s \,v_k(t)\notag\\
&+\frac{1-\alpha_k}{\beta_k} \int_0^1 \partial_2 f_k(\sigma_k,\,s) (t)\mathrm{d}s\, v_k(t-\beta_k z_{\sigma_k}).
\end{align}
We show that there exists a convergent subsequence of
$\{v_k\}_{k=1}^{+\infty}$. Indeed, by (\ref{eq-3-29}) we know that $\{z_k,\,\beta_k\}_{k=1}^{+\infty}$ is uniformly bounded in $C(\mathbb{R};\mathbb{R})\times\mathbb{R}$.  Therefore we have
\begin{align}\label{eq-3-36}
\lim_{t\rightarrow+\infty} t-\beta_k z_k(t)=+\infty.
\end{align}
By (\ref{eq-3-35}),
(\ref{eq-3-37}) and (\ref{eq-3-36}), we have $\|v_k\|=1,\,\|v_k(\cdot-\beta_k
z_k)\|=1$. Note that by (S1) and (\ref{eq-3-31}) and by an argument similar yielding (\ref{uniform-convergence-a}), we know that
\begin{align}\label{uniform-convergence-b} 
\lim_{k\rightarrow+\infty}\|\partial_1 f_k(\sigma_k,\,s)-\partial_1
f(\sigma^*)\|=0,\,\,
\lim_{k\rightarrow+\infty}\|\partial_2 f_k(\sigma_k,\,s)-\partial_2
f(\sigma^*)\|=0, 
\end{align}
uniformly for $s\in [0, 1]$. We know from (\ref{uniform-convergence-b}) that
$\|\partial_1
f_k(\sigma_k,\,s)\|$, $\|\partial_2 f_k(\sigma_k,\,s)\|$,
 are both uniformly bounded for every
$k\in\mathbb{N}$ and $s\in [0,\,1]$. It follows from
(\ref{eq-3-38}) that there exists $\tilde{L}_3>0$ such that $\|\dot{v}_k\|<\tilde{L}_3$
for every $k\in\mathbb{N}$. By the Arzela-Ascoli Theorem, there exists a convergent
subsequence $\{v_{k_j}\}_{j=1}^{+\infty}$ of
$\{v_k\}_{k=1}^{+\infty}$. That is, there exists $v^*\in\{v\in V:
\|v\|=1\}$ such that
\begin{align}{\label{eq-3-39}}
\lim_{j\rightarrow+\infty}\|v_{k_j}-v^*\|=0.
\end{align}
By the Integral Mean Value Theorem, we obtain for all $t\in\mathbb{R}$, 
\begin{align}
&|v_{k_j}(t-\beta_{k_j} z_{k_j}(t))-v_{k_j}(t-\beta^* z_{\sigma^*})|\notag\\
= & \left|\int_0^1 \dot{v}_{k_j}(t-\beta^*
z_{\sigma^*}-\theta(\beta_{k_j} z_{k_j}(t)-\beta^*
z_{\sigma^*}))\mathrm{d}\theta(\beta_{k_j} z_{k_j}(t)-\beta^*
z_{\sigma^*})\right|\notag\\
\leq & \|\dot{v}_{k_j}\|\cdot |\beta_{k_j} z_{k_j}(t)-\beta^*
z_{\sigma^*}|\notag\\
\leq & \tilde{L}_3 (\beta_{k_j}
|z_{k_j}(t)-z_{\sigma^*}|+|\beta_{k_j}-\beta^*|z_{\sigma^*}).\label{eq-3-40}
\end{align}
Then by (\ref{eq-3-31}) and (\ref{eq-3-40}) we have
\begin{align}\label{eq-3-41}
\lim_{j\rightarrow+\infty}\|v_{k_j}(\cdot-\beta_{k_j} z_{k_j}(\cdot))-v_{k_j}(\cdot-\beta^*
z_{\sigma^*})\|=0.
\end{align}
From (\ref{eq-3-39}) and  (\ref{eq-3-41}) we have
\begin{align}\label{eq-3-42}
\lim_{j\rightarrow+\infty}\|v_{k_j}(\cdot-\beta_{k_j} z_{k_j}(\cdot))-v^*(\cdot-\beta^*
z_{\sigma^*})\|=0.
\end{align}
It follows from (\ref{eq-3-31}), (\ref{uniform-convergence-b}), (\ref{eq-3-39}) and (\ref{eq-3-42})
that the right hand side of (\ref{eq-3-38}) converges uniformly to
the right hand side of (\ref{eq-3-32}). Therefore,
\begin{align}\label{eq-3-43}
\lim_{j\rightarrow+\infty}|\dot{v}_{k_j}(t)-\dot{v}^*(t)|=0,
\end{align}
and
\begin{align}
\dot{v}^*(t)=\frac{1}{\beta^*} \partial_1 f(\sigma^*)  v^*(t)+\frac{1}{\beta^*} \partial_2 f(\sigma^*)  v^*(t-\beta^*\tau_{\sigma^*}).\label{eq-3-44}
\end{align}
Noticing that $v^*\in \{v: \|v\|=1\}$, we have $\|v^*\|\neq 0$. Since the matrix
 $\partial_1 f(\sigma^*)+\partial_2 f(\sigma^*)$ is nonsingular, $v^*$ is a nonconstant periodic
solution of (\ref{eq-3-44}). 
Then by Lemma~\ref{linear-eq-center}
$(y_{\sigma^*},\,\sigma^*,\,\beta^*)$ is also a center of
(\ref{eq-3-5})  in $\overline{B(y_0,\,\sigma_0,\,\beta_0;
r,\,\rho)}$. This contradicts  the assumption that
$B(y_0,\,\sigma_0,\,\beta_0; r,\,\rho)$ is a special neighborhood
of (\ref{eq-3-5}) which contains only one center $(y_0,\,\sigma_0,\,\beta_0)$. This completes the proof.\hfill{ }\qed
\end{proof}
Now we  are in the position to  prove a local Hopf bifurcation
theorem for system~(\ref{SDDE-general}).
\begin{theorem}\label{localhopf}
Assume $($S1$\,)$--$($S3$\,)$ hold. Let $(x_{\sigma_0},\, \sigma_0)$ be
an isolated center of system
$($\ref{eq-3-1}$\,)$.   If  the crossing number defined by $($\ref{crossingnumber}$\,)$ satisfies
\[\gamma(x_{\sigma_0},\, \sigma_0, \,\beta_0)\neq
0,\] then there exists a bifurcation of nonconstant periodic
solutions of $($\ref{SDDE-general}$\,)$ near
$(x_{\sigma_0}, \,\sigma_0)$. More precisely, there exists a
sequence  $\{(x_n, \,\sigma_n,
\beta_n)\}$ such that $\sigma_n\rightarrow\sigma_0$,
$\beta_n\rightarrow\beta_0$ as $n\rightarrow\infty$,  and
$\lim_{n\rightarrow\infty}\|x_n-x_{\sigma_0}\|=0$, where
\[
(x_n,\, \sigma_n)\in C(\mathbb{R};\mathbb{R}^{N})\times\mathbb{R}
\] is a nonconstant $2\pi/\beta_n$-periodic solution of system  $($\ref{SDDE-general}$\,)$.
\end{theorem}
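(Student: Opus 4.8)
The plan is to realize nonconstant periodic solutions of system~(\ref{SDDE-general}) as zeros of the map $\mathcal{F}$ introduced in Lemma~\ref{homotopy-lemma}, and to detect them via the $S^1$-equivariant degree, reducing the computation to the finite-dimensional crossing number $\gamma(x_{\sigma_0},\,\sigma_0,\,\beta_0)$. First I would set up the functional-analytic reformulation: by Lemma~\ref{Lemma-2-1} and the normalization $(x(t),\tau(t))=(y(\beta t),z(\beta t))$, a nonconstant $2\pi/\beta$-periodic solution of~(\ref{SDDE-general}) corresponds exactly to a zero $(y,\sigma,\beta)$ of $\mathcal{F}(y,\sigma,\beta)=y-(L_0+K)^{-1}\bigl[\tfrac1\beta N_0(y,\sigma,\beta)+K(y)\bigr]$ with $y$ nonconstant, where $(L_0+K)^{-1}$ is compact by Lemma~\ref{bounded-inverse} and $N_0(\cdot,\sigma,\beta)$ is continuous by Lemma~\ref{bounded-N-0}; hence $\mathcal{F}$ is a compact vector field on $V\times\mathbb{R}^2$ admitting an $S^1$-equivariant structure from the shift action on $2\pi$-periodic functions.

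Next I would argue by contradiction in the usual Rabinowitz-type fashion: if there is no bifurcation of nonconstant periodic solutions near $(x_{\sigma_0},\sigma_0)$, then for small $r,\rho>0$ the set $\mathcal{U}=B(y_0,\sigma_0,\beta_0;r,\rho)$ is a special neighborhood of $\tilde{\mathcal{F}}$ (the linearized field) containing $(y_0,\sigma_0,\beta_0)$ as its only isolated center, and moreover $\mathcal{F}$ has no nonconstant zeros in $\overline{\mathcal{U}}$ other than the stationary curve. Appending Ize's completing function $\theta$ and using Lemma~\ref{lemma-3-1} to guarantee $\mathcal{U}$-admissibility of $\mathcal{F}_\theta=(\mathcal{F},\theta)$, the $S^1$-degree $\mathrm{Deg}_{S^1}(\mathcal{F}_\theta,\mathcal{U})$ is well defined. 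Then by Lemma~\ref{homotopy-lemma} (shrinking $r$ to $r'$ if needed), $\mathcal{F}_\theta$ and $\tilde{\mathcal{F}}_\theta$ are $S^1$-homotopic on $\overline{B(y_0,\sigma_0,\beta_0;r',\rho)}$, so their $S^1$-degrees coincide; since all zeros of $\mathcal{F}_\theta$ in $\overline{\mathcal{U}}$ lie on the stationary curve where the degree contribution vanishes, $\mathrm{Deg}_{S^1}(\mathcal{F}_\theta,\mathcal{U})=0$.

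To reach the contradiction I would compute $\mathrm{Deg}_{S^1}(\tilde{\mathcal{F}}_\theta,\mathcal{U})$. Because $\tilde{\mathcal{F}}$ is linear in $y$ (up to the affine shift by $y_\sigma$), I would decompose $V$ into Fourier modes $e^{imt}\mathbb{C}^N$, $m\in\mathbb{Z}$, on each of which the restriction of $I-(L_0+K)^{-1}[\tfrac1\beta\tilde N_0(\cdot,\sigma,\beta)+K]$ is multiplication by a matrix expressible through the characteristic matrix $\Delta_{(x_\sigma,\sigma)}(im\beta)$. The standard splitting lemma for the $S^1$-degree (as in~\cite{MR2644135}, or the product formula in~\cite{kw}) then yields $\mathrm{Deg}_{S^1}(\tilde{\mathcal{F}}_\theta,\mathcal{U})=\sum_{m\geq1}\gamma(x_{\sigma_0},\sigma_0,m\beta_0)\cdot(\text{basic }S^1\text{-degree generator})$, where the sum is finite because $(x_{\sigma_0},\sigma_0)$ is an isolated center so only $m=1$ contributes near $\beta_0$ (the higher modes $im\beta_0$, $m\geq2$, are not characteristic values for $\sigma$ near $\sigma_0$, by~(\ref{character-pure}) with $\delta$ small). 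Hence $\mathrm{Deg}_{S^1}(\tilde{\mathcal{F}}_\theta,\mathcal{U})$ is the generator multiplied by $\gamma(x_{\sigma_0},\sigma_0,\beta_0)\neq0$, so it is nonzero, contradicting the vanishing obtained above. This forces the existence of the asserted sequence $\{(x_n,\sigma_n,\beta_n)\}$ of nonconstant $2\pi/\beta_n$-periodic solutions accumulating at $(x_{\sigma_0},\sigma_0,\beta_0)$; undoing the time rescaling gives the claim.

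The main obstacle I expect is the degree computation itself — specifically, verifying that the $S^1$-degree of the linearized field $\tilde{\mathcal{F}}_\theta$ on the Fourier-mode decomposition is genuinely governed by the Brouwer-degree crossing number~(\ref{crossingnumber}), i.e.\ matching the completing function $\theta$ and the normalization of~$\mathcal{U}$ with the normalizations in the product/splitting formula for $\mathrm{Deg}_{S^1}$, and confirming that only the first Fourier mode contributes. Once the correspondence between the $m$-th mode restriction of $\tilde{\mathcal{F}}$ and $\det\Delta_{(x_\sigma,\sigma)}(im\beta)$ is pinned down via~(\ref{character-matrix}), the rest is a direct transcription of the argument in~\cite{MR2644135}.
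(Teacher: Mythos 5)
Your proposal is correct and follows essentially the same route as the paper: reformulate the rescaled problem as a zero-finding problem for the compact field $\mathcal{F}$, use Lemma~\ref{lemma-3-1} for admissibility on a special neighborhood and Lemma~\ref{homotopy-lemma} to homotope $(\mathcal{F},\theta)$ to $(\tilde{\mathcal{F}},\theta)$, then compute the degree of the linearized field on the isotypical components $V_k$, where the $k$-th block is $\frac{1}{ik\beta}\Delta_{(y_\sigma,\sigma)}(ik\beta)$ and the first mode yields $\epsilon\cdot\gamma(x_{\sigma_0},\sigma_0,\beta_0)\neq 0$. The only difference is organizational: the paper packages these steps as verification of hypotheses (A1)--(A6) of Theorem~2.4 in \cite{MR2644135} and invokes that theorem, whereas you unfold the underlying degree-jump contradiction explicitly; the substance is the same.
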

\begin{proof}  
Let $(x,\,\tau)$ be a solution of system $($\ref{SDDE-general}$\,)$ with $x$ being $2\pi/\beta$-periodic and $\beta>0$. Let $(x(t),\tau(t))=(y(\beta t),\,z(\beta t))$. Then
system $($\ref{SDDE-general}$\,)$ is transformed to
\begin{align}\label{eq-3-22-b}
\left\{
\begin{array}{ll}
\dot{y}(t)&=\frac{1}{\beta}f(y(t),\,y(t-\beta z(t)),\,\sigma),\\
z(t)&= g(y(t),\,y(t-\beta z_\sigma),\,\sigma).\\
\end{array}
\right.
\end{align}
Then $x$ is a ${2\pi}/{\beta}$-periodic solution of system $($\ref{SDDE-general}$\,)$  
if and only if $y$ is a $2\pi$-periodic solution of system  (\ref{eq-3-22-b}). 

Let 
$V=C_{2\pi}(\mathbb{R};\,\mathbb{R}^{N})$. For any $\xi=e^{i\nu}\in S^1$, $u\in V$, $(\xi u)(t):=u(t+\nu)$.  Recall that $\delta$ and $\varepsilon$ are defined before (\ref{crossingnumber}). Let
$\mathscr{D}(\sigma_0,\,\beta_0)=(\sigma_0-\delta,\,\sigma_0+\delta)\times
(\beta_0-\varepsilon,\,\beta_0+\varepsilon)$ and define the maps
\begin{align*} 
\begin{aligned}
 L_0y(t): & = \dot{y}(t),\, y\in C_{2\pi}^1(\mathbb{R};\,\mathbb{R}^{N}),\\
 N_0(y, \,\sigma,\,\beta)(t):& = f(y(t),\,y(t-\beta z(t)),\sigma), y\in V,\\
 \tilde{N}_0(y, \,\sigma,\,\beta)(t): & = \partial_1 f(\sigma)(y(t)-y_{\sigma})+\partial_2 f(\sigma)(y(t-\beta z_{\sigma})-y_{\sigma}), y\in V,
\end{aligned}
\end{align*}
where 
$(\sigma,\,\beta)\in\mathscr{D}(\sigma_0,\,\beta_0)$ and
$t\in\mathbb{R}$, and $(y_{\sigma},\,z_{\sigma})$ is
the stationary state of the system at $\sigma$ such that $y_{\sigma_0}=x_{\sigma_0}$. The space $V$ is a Banach
representation of the group $G=S^1$.
 
Define the operator $K: V\rightarrow
\mathbb{R}^{N}$ by
\begin{align*} 
K(y):=\frac{1}{2\pi}\int_0^{2\pi}y(t) dt,\,y\in V.
\end{align*}
By Lemma~\ref{bounded-inverse}, the operator $L_0+K: C_{2\pi}^1(\mathbb{R};\mathbb{R}^{N})\rightarrow V$ has a compact inverse
$(L_0+K)^{-1}: V\rightarrow V$.
Then, finding a $2\pi/\beta$-periodic solution for the system
(\ref{SDDE-general}) is equivalent to finding a solution of the following
fixed point problem:
\begin{align}
y=(L_0+K)^{-1}\left[\frac{1}{\beta}N_0(y,\,\sigma,\,\beta)+K(y)\right],
\end{align}
where $(y,\,\sigma,\,\beta)\in V\times\mathbb{R}\times(0,\,+\infty)$.

Define the following maps $\mathcal{F}:
V\times\mathbb{R}\times(0,\,+\infty)\rightarrow V$ and $\tilde{\mathcal{F}}:
V\times\mathbb{R}\times(0,\,+\infty)\rightarrow V$ by
\begin{align*}
&\mathcal{F}(y,\,\sigma,\,\beta):=y-(L_0+K)^{-1}\left[\frac{1}{\beta}N_0(y,\,\sigma,\,\beta)+K(y)\right],\\
&\widetilde{\mathcal{F}}(y,\,\sigma,\,\beta):=y-(L_0+K)^{-1} \left[\frac{1}{\beta}
\tilde{N}_0(y, \,\sigma,\,\beta)+K (y)\right].
\end{align*}
 Finding a
${2\pi}/{\beta}$-periodic solution of  system (\ref{SDDE-general}) is
equivalent to finding the solution of the problem
\begin{align*}
\mathcal{F}(y,\,\sigma,\,\beta)=0,\quad (y, \,\sigma,\,\beta)\in
V\times\mathbb{R}\times(0,\,+\infty).
\end{align*}

The idea of the proof in the sequel
is to verify all the conditions (A1)-(A6) for applying  Theorem~2.4 on Hopf bifurcation developed in \cite{MR2644135}:
\begin{enumerate}
\item[(A1)] $V$ has an $S^1$-isotypical decomposition $V=\overline{\oplus_{k=0}^\infty V_k}$ and for each integer $k=0,\,1,\,2\cdots,$ the subspace $V_k$ is of finite dimension.
\item[(A2)]  There exists a compact resolvent $K$ of $L_0$  such that for every fixed parameter $(\sigma,\,\beta)\in\mathbb{R}^2$,
$(L_0+K)^{-1}\circ[N_0(\cdot,\,\sigma,\,\beta)+K]: V\rightarrow V$ is a condensing map.
\item[(A3)] There exists a 2-dimensional submanifold $M\subset V_0\times\mathbb{R}^2$ such that i) $M\subset\mathscr{F}^{-1}(0)$; ii) if $(y_0,\,\sigma_0,\,\beta_0)\in M$, then there exists an open neighborhood $U_{(\sigma_0,\,\beta_0)}$ of $(\sigma,\,\beta)$ in $\mathbb{R}^2$ , an open neighborhood $U_{y_0}$ of $U_0$ in $V_0$, and a $C^1$-map $\eta: U_{(\sigma_0,\,\beta_0)}\rightarrow U_{y_0}$ such that $M\cap (U_{y_0}\times U_{(\sigma_0,\,\beta_0)})=\{(\eta(\sigma,\,\beta),\,(\sigma,\,\beta)): (\sigma,\,\beta)\in U_{(\sigma_0,\,\beta_0)}\}$.
\item[(A4)] $M\subset\tilde{\mathscr{F}}^{-1}(0)$ and  for every fixed parameter $(\sigma,\,\beta)\in\mathbb{R}^2$,
$(L_0+K)^{-1}\circ[\tilde{N}_0(\cdot,\,\sigma,\,\beta)+K]: V\rightarrow V$ is a condensing map.
\item[(A5)] There exist $r>0$ and $\rho>0$ so that $B(y_0,\,\sigma_0,\,\beta_0; r,\,\rho)$ is a special neighborhood of $\widetilde{\mathcal{F}}$ and  there exists
$r'\in (0,\, r]$ such that $\mathscr{F}(y,\,\sigma,\,\beta)\neq 0$
for $(y,\,\sigma,\,\beta)\in\overline{B(y_0,\,\sigma_0,\,\beta_0;
r',\,\rho)}$ with $y\neq \eta(\sigma,\,\beta)$ and
$|(\sigma,\,\beta)-(\sigma_0,\beta_0)|=\rho$.
\item[(A6)] $D_u\widetilde{\mathcal{F}}(y_0,\,\sigma_0,\,\beta_0): V_0\rightarrow V_0$ is an isomorphism.
\end{enumerate}

By (S1) we know that the linear operator $\tilde{N}_0$ is
continuous. By Lemma~\ref{bounded-N-0}, we know that
$N_0(\cdot,\,\sigma,\,\beta): V\rightarrow V$ is continuous.
Moreover, by Lemma~\ref{bounded-inverse} the operator $(L_0+K)^{-1}:
V\rightarrow V$ is compact and hence
$(L_0+K)^{-1}\circ(\frac{1}{\beta}N_0(\cdot, \alpha, \beta )+K):
V\rightarrow V$ and
$(L_0+K)^{-1}\circ(\frac{1}{\beta}\tilde{N}_0(\cdot, \alpha,
\beta)+K): V\rightarrow V$ are completely continuous and hence are
condensing maps. That is, (A2) and (A4) are satisfied.

Since $(x_{\sigma_0}, \,\sigma_0)=(y_{\sigma_0},\,\sigma_0)$ is an isolated center of system (\ref{eq-3-1}) with a purely imaginary characteristic value $i\beta_0$, $\beta_0>0$, $(y_{\sigma_0},\,\sigma_0,\,\beta_0)\in V\times\mathbb{R}\times(0,\,+\infty)$ is an isolated $V$-singular point of
$\tilde{\mathcal{F}}$. That is,   $(y_{\sigma_0},\,\sigma_0,\,\beta_0)$ is the only point in $V$ such that the derivative $D_y\mathcal{F}(y_{\sigma_0},\,\sigma_0,\,\beta_0)$ is not an automorphism of $V$.  One can define the following two-dimensional
submanifold $M\subset V^G\times\mathbb{R}\times(0,\,+\infty)$ by
\begin{align*}
M:=\{(y_\sigma,\,\sigma,\,\beta): \sigma\in
(\sigma_0-\delta,\,\sigma_0+\delta),\,\beta\in
(\beta_0-\varepsilon,\,\beta_0+\varepsilon)\},
\end{align*}
such that the point $(y_{\sigma_0},\,\sigma_0,\,\beta_0)$ is the
only $V$-singular point  of $\tilde{\mathcal{F}}$ in $M$. $M$ is the set of
trivial solutions to the system (\ref{eq-3-1}) and satisfies the
assumption (A3).

Since $(y_{\sigma_0},\,\sigma_0,\,\beta_0)\in V\times\mathbb{R}\times(0,\,+\infty)$ is an isolated $V$-singular point of
$\tilde{\mathcal{F}}$, for $\rho>0$ sufficiently small, the linear operator
$D_u \tilde{\mathcal{F}}(y_\sigma,\,\sigma,\,\beta): V\rightarrow V$ with $|(\sigma,\,\beta)-(\sigma_0,\,\beta_0)|<\rho$, is not an automorphism only if $(\sigma,\,\beta)=(\sigma_0,\,\beta_0)$.
Then, by the Implicit Function Theorem, there exists $r>0$ such that for every $(y,\,\sigma,\,\beta)\in V\times\mathbb{R}\times(0,\,+\infty)$ with $|(\sigma,\,\beta)-(\sigma_0,\,\beta_0)|=\rho$ and $0<\|y-y_\sigma\|\leq r$, we have
$\tilde{\mathcal{F}}(y,\,\sigma,\,\beta)\neq 0$.  Then the set $B(x_0,\,\sigma_0,\,\beta_0; r, \rho)$ defined by
\begin{align*}
\{(y,\, \sigma,\,\beta)\in V\times\mathbb{R}\times(0,\,+\infty);
|(\sigma,\,\beta)-(\sigma_0,\,\beta_0)|<\rho, \|y-y_\sigma\|<r\},
\end{align*} is a special neighborhood for $\tilde{\mathcal{F}}$.

By Lemma~\ref{lemma-3-1}, there exists a special neighborhood
$\mathcal{U}=B(y_{\sigma_0},\,\sigma_0,\,\beta_0;r', \rho)$ such that
$\mathcal{F}$  and $\tilde{\mathcal{F}}$ are
nonzero for
$(y,\,\sigma,\,\beta)\in\overline{B(y_{\sigma_0},\,\sigma_0,\,\beta_0;r',
\rho)}$ with $y\neq y_\sigma$ and
$|(\sigma,\,\beta)-(\sigma_0,\,\beta_0)|=\rho$. That is, (A5) is satisfied.

Let $\theta$ be a completing function on $\mathcal{U}$. It
follows from Lemma~\ref{homotopy-lemma} that $
(\mathcal{F},\,\theta) $ is homotopic to
$(\tilde{\mathcal{F}},\,\theta)$ on
$\mathcal{U}$.

It is known that $V$ has the following isotypical direct sum
decomposition
\begin{align*}
V=\overline{\bigoplus\limits_{k=0}^{\infty} V_k},
\end{align*}
where $V_0$ is the space of all constant mappings from $\mathbb{R}$ into $\mathbb{R}^{N}$, and   $V_k$ with $k>0$, $k\in\mathbb{N}$ is the vector space of all mappings of the form 
\[
x\cos k\cdot+y\sin k\cdot: \mathbb{R}\ni t\rightarrow x\cos k t+y\sin k t\in\mathbb{R}^{N},
\] where  $x,\,y\in \mathbb{R}^{N}$.
Then  $V_k$, $k>0,\,k\in\mathbb{N}$, are finite dimensional.  Then,  (A1) is satisfied.

For $(\sigma,\,\beta)\in\mathscr{D}(\sigma_0,\,\beta_0)$, we denote by $\Psi(\sigma,\,\beta)$ the map $D_y\widetilde{\mathcal{F}}(y(\sigma),\,\sigma,\,\beta): V\rightarrow  V$.  Then we have $\Psi(\sigma,\,\beta)(V_k)\subset V_k$ for all
$k=0,\,1,\,2,\,\cdots$.  Therefore, we can define $\Psi_k:
\mathscr{D}(\sigma_0,\,\beta_0)\rightarrow L(V_k,\,V_k)$ by
\begin{align*}
\Psi_k(\sigma,\,\beta):=\Psi(\sigma,\,\beta)|_{V_k}.
\end{align*}
We note that  $V_k$, $k\geq 1,\,k\in\mathbb{N}$, can be endowed with the natural complex structure $J: V_k\rightarrow V_k$ defined by
 \[
 J(x\cos k\cdot+y\sin k\cdot)=-x\sin k\cdot+y\cos k\cdot),\,x\,\,y\in\mathbb{R}^N.
\] By extending the linearity of $J$ to the vector space  spanned over the field of complex numbers  by $e^{ik\cdot}\cdot\epsilon_j:\mathbb{R}\ni t\rightarrow e^{ik t}\cdot\epsilon_j\in\mathbb{C}^{N},\,j=1,\,2,\,\cdots,\,N$, we know that
\[
\{e^{ik\cdot}\cdot\epsilon_j,\,J(e^{ik\cdot}\cdot\epsilon_j)\}_{j=1}^{N}=\{e^{ik\cdot}\cdot\epsilon_j,\,ie^{ik\cdot}\cdot\epsilon_j\}_{j=1}^{N}
\] is a basis of $V_k$,  where
$\{\epsilon_1,\,\epsilon_2,\,\cdots,\epsilon_{N}\}$ denotes the
standard basis of $\mathbb{R}^{N}$.    Then we identify $V_k$ with the vector space over the complex numbers  spanned by
$e^{ik\cdot}\cdot\epsilon_j,\,j=1,\,2,\,\cdots,\,N$. 

Then we have for $v_k\in V_k$, $k\in\mathbb{Z}$, $k\geq 1$,
\begin{align*}
 {\Psi}_k(\sigma, \beta) v_k &=
v_k-(L_0+K)^{-1}\left(\frac{1}{\beta}D_u \tilde{N}_0(u(\sigma),\,\sigma,\,\beta)+K\right)v_k\\
&=v_k-\frac{1}{\beta}(L_0+K)^{-1} \left(\partial_1 f(\sigma)  v_k+ \partial_2 f(\sigma) (v_k)_{\beta z_{\sigma}}\right),
\end{align*}
where $(v_k)_{\beta z_{\sigma}}=v_k(\cdot-\beta z_{\sigma})$. Then we have, for $e^{ik\cdot}\epsilon_j\in V_k$,
\begin{align*}
&\, {\Psi}_k(\sigma,\,\beta)(e^{ik\cdot}\epsilon_j)\\ 
=&\frac{1}{ik\beta} \left( ik\beta\,\mathrm{Id}-\partial_1 f(\sigma)-\partial_2 f(\sigma)e^{-ik\beta z_\sigma}  \right)\cdot(e^{ik\cdot}\epsilon_j)\\
=&\frac{1}{ik\beta}  
\Delta_{(u(\sigma),\,\sigma)}(ik\beta) \cdot (e^{ik\cdot}\epsilon_j),
\end{align*}
where the last equality follows from (\ref{character-matrix}).
Therefore, the matrix representation $[{\Psi}_k]$ of ${\Psi}_k(\sigma,\,\beta)$
with respect to the ordered $\mathbb{C}$-basis 
$\{e^{ik\cdot}\epsilon_j\}_{j=1}^{N}$
is given by
\begin{align*}
\frac{1}{ik\beta}  
\Delta_{(y_\sigma,\,\sigma)}(ik\beta).
\end{align*}

Next we  show that there exists
some $k\in\mathbb{Z}$, $k\geq 1$, such that
$\mu_k(y_{\sigma_0},\,\sigma_0,\beta_0):=\deg_B(\det_\mathbb{C}[\Psi_k])\neq 0$.

Define $\Psi_H:
\mathscr{D}(\sigma_0,\,\beta_0)\rightarrow\mathbb{R}^2\simeq
\mathbb{C}$ by
\begin{align*}
\Psi_H(\sigma,\,\beta)
&=\det\Delta_{(y_\sigma,\, \sigma)}(i\beta).
\end{align*}
The number
$\mu_1(y_{\sigma_0},\,\sigma_0,\,\beta_0)$   can be written as follows (see Theorem~7.1.5 of \cite{kw}):
\begin{align*}
\mu_1(u(\sigma_0),\,\sigma_0,\,\beta_0)
&=\epsilon\cdot \deg\left( \Psi_H,
\mathscr{D}(\sigma_0,\,\beta_0)\right),
\end{align*}
where $\epsilon=\mathrm{sign}\det \Psi_0(\sigma,\,\beta)$ for
$(\sigma,\beta)\in\mathscr{D}(\sigma_0,\,\beta_0)$. For a constant map $v_0\in V_0$,
\[
\Psi_0(\sigma,\,\beta)v_0=-\frac{1}{\beta} (\partial_1 f(\sigma)+\partial_2 f(\sigma))v_0.
\]Then, by (S3), we have
$\epsilon\neq 0$ and therefore (A6) is satisfied.

Note that $\alpha_0,\,\beta_0,\,\delta$ and $\varepsilon$ are chosen at (\ref{crossingnumber}). Define the function
$H:
[\sigma_0-\delta,\,\sigma_0+\delta]\times\overline{\Omega}\rightarrow\mathbb{R}^2\simeq\mathbb{C}$
by
\begin{align*}
H(\sigma,\,\alpha,\,\beta):=\det
\Delta_{(y_\sigma,\,\sigma)}(\alpha+i\beta),
\end{align*}
where
$\Omega=(0,\,\alpha_0)\times(\beta_0-\varepsilon,\,\beta_0+\varepsilon)$,
$\alpha_0=\alpha_0(\sigma_0,\,\beta_0)>0$. By the same argument for (\ref{character-pure}) and (\ref{crossingnumber}), we know that $H$ satisfies all the
conditions of Lemma~2.1 of \cite{MR2644135} (or Lemma 7.2.1 of \cite{kw}) by the choice of
$\alpha_0,\,\beta_0,\,\varepsilon$ and $\delta$. So we have
\begin{align*}
\deg\left( \Psi_H,
\mathscr{D}(\sigma_0,\,\beta_0)\right)=\gamma(y_{\sigma_0},\,\sigma_0,\,\beta_0)\neq
0.
\end{align*}
Thus, $\mu_1(y_{\sigma_0},\,\sigma_0,\,\beta_0)\neq 0$ which, by
Theorem~2.4 of  \cite{MR2644135}, implies that $
(y_{\sigma_0},\,\sigma_0,\,\beta_0)$ is a bifurcation point of the
system (\ref{eq-3-22-b}). Consequently, there exists a sequence of
non-constant periodic solutions
$(x_n,\,\sigma_n,\,\beta_n)$
such that $\sigma_n\rightarrow \sigma_0$, $\beta_n\rightarrow
\beta_0$ as $n\rightarrow\infty$, and   $x_n$ is a
$2\pi/\beta_n$-periodic solution of (\ref{SDDE-general}) such that the associated pair $(x_n,\,\tau_n)$ with $\tau_n(t)=g(x_n(t),\,x_n(t-\tau_n(t)),\,\sigma_n)$ satisfies (\ref{SDDE-general}) with
$\lim_{n\rightarrow+\infty}\|(x_n,\,\tau_n)-(x_{\sigma_0},\,\tau_{\sigma_0})\|=0$.
\hfill{ }\qed \end{proof}

\section{Local Hopf bifurcation for system~(\ref{SDDE-general-original})}\label{local-bifurcations-new}

Now we consider the local Hopf bifurcation problem of   system~(\ref{SDDE-general-original}). By Lemma~\ref{Lemma-2-1}, we know that if $x\in C^1(\mathbb{R};\mathbb{R}^N)$ is $p$-periodic and is in a small neighborhood  $O(\epsilon)$ of $x_\sigma$, there exists a unique $p$-periodic $\tau\in C^1(\mathbb{R};\mathbb{R})$ such that $\tau(t)=g(x(t),\,x(t-\tau(t)),\,\sigma),\,t\in\mathbb{R}$. We call $x$ a solution if   $(x,\,\tau)$ satisfies system~(\ref{SDDE-general}).

For a stationary state $x_0$ of system~(\ref{SDDE-general-original}) with the
parameter $\sigma _0$, we say that $(x_0,\,\sigma_0)$ is a Hopf
bifurcation point of system (\ref{SDDE-general-original}), if there exist a sequence
$\{(x_k,\,\sigma_k,\,T_k)\}_{k=1}^{+\infty}\subseteq C^1(\mathbb{R};
\mathbb{R}^{N})\times\mathbb{R}^2$ and $T_0> 0$ such that
$$
\lim_{k\rightarrow+\infty}\|(x_k,\,\sigma_k,\,
T_k)-(x_0,\,\sigma_0,\,T_0)\|_{C^1(\mathbb{R};
\mathbb{R}^{N})\times\mathbb{R}^2}=0,
$$
and $(x_k,\,\sigma_k)$ is a nonconstant $T_k$-periodic solution of
system (\ref{SDDE-general-original}). 

We freeze the state-dependent delay in system~(\ref{SDDE-general-original}) at its stationary state  and linearize the resulting differential equation of $x$ with constant delay at the stationary state.
For $\sigma\in
(\sigma_0-\epsilon_0,\,\sigma_0+\epsilon_0)$,  the following formal
linearization of system (\ref{SDDE-general-original}) at the stationary point
$x_{\sigma}$:
\begin{align}\label{new-eq-3-1-original}
\dot{x}(t) =  
\partial_1 f(\sigma)  \left( 
 x(t)-x_\sigma 
\right) + 
\partial_2 f(\sigma) \left(  x(t-\tau_\sigma)-x_\sigma
\right),
\end{align}
where
\begin{align*}
\partial_1 f(\sigma)&:=\partial_1 f(x_\sigma,
\,\tau_\sigma,\,\sigma), \,
\partial_2 f(\sigma):=\partial_2 f(x_\sigma,
\,\tau_\sigma,\,\sigma),\,\tau_\sigma=g(x_\sigma,\,x_\sigma,\,\sigma).
\end{align*}Notice that the system~(\ref{new-eq-3-1-original}) is the same as system~(\ref{eq-3-1}) and hence they share the same characteristic equations.

Let $(x_{\sigma_0}, \,\sigma_0)$ be
an isolated center of (\ref{eq-3-1}) and let $O(\epsilon_0)\subset C^1(\mathbb{R};\mathbb{R}^N)$ be a neighborhood of $x_{\sigma_0}$.  In the following we confine the discussion with $x\in O(\epsilon_0)\subset C^1(\mathbb{R};\mathbb{R}^N)$, where by Lemma~\ref{Lemma-2-1}, $\epsilon_0>0$ is chosen so that every $p$-periodic $x\in O(\epsilon_0)\subset C^1(\mathbb{R};\mathbb{R}^N)$ determines a unique continuously differentiable $p$-periodic $\tau$.

Now we formulate the Hopf bifurcation problem as a fixed point problem
in $C^1(\mathbb{R};\mathbb{R}^N)$. We normalize
the period of the $2\pi/\beta$-periodic solution $x\in O(\epsilon_0)$ of (\ref{SDDE-general}) and the associated $\tau\in C^1(\mathbb{R};\mathbb{R})$ by setting
$(x(t),\tau(t))=(y(\beta t),\,z(\beta t))$. We obtain
\begin{align}\label{new-eq-3-4}
\left(
\begin{array}{c}
\dot{y}(t)\\
 {z}(t)
\end{array}
\right)=\left(
\begin{array}{c}
\frac{1}{\beta}f(y(t),\,y(t-\beta z(t)),\,\sigma)\\
g(y(t),\,y(t-\beta z(t)),\,\sigma)
\end{array}
\right).
\end{align}

Let $W=O(\epsilon_0)\cap C^1_{2\pi}(\mathbb{R};\mathbb{R}^N)$. Define $N_1: W\ni(y,\,\sigma,\,\beta) \times\mathbb{R}^2\rightarrow N_1(y,\,\sigma,\,\beta)\in C^1_{2\pi}(\mathbb{R};\mathbb{R}^N)$   by
\begin{align}\label{N-1-definition}
 N_1(y,\,\sigma,\,\beta)(t)= f(y(t),\,y(t-\beta z(t)),\,\sigma). 
\end{align}Then the equation for $\dot{y}$ in system~(\ref{new-eq-3-4}) is rewritten as
\begin{align}\label{new-eq-3-4-new}
\dot{y}(t) = \frac{1}{\beta}N_1(y,\,\sigma,\,\beta)(t). 
\end{align}
Correspondingly, (\ref{new-eq-3-1-original}) is transformed into
\begin{align}\label{new-eq-3-5}
\dot{y}(t) =\frac{1}{\beta}\tilde{N}_1(y,\,\sigma,\,\beta)(t),
\end{align}
where $\tilde{N}_1:  W\ni(y,\,\sigma,\,\beta) \times\mathbb{R}^2\rightarrow \tilde{N}_1(y,\,\sigma,\,\beta)\in C^1_{2\pi}(\mathbb{R};\mathbb{R}^N)$ is defined by
\begin{align*}
\tilde{N}_1(y,\,\sigma,\,\beta)(t)=  \partial_1 f(\sigma)   \left(y(t)-y_\sigma\right)
+  
\partial_2 f(\sigma) \left(y(t-\beta z_\sigma)-y_\sigma\right).
\end{align*} with $(y_{\sigma},\,z_{\sigma})=(x_{\sigma},\,\tau_{\sigma})$.
We note that $y$ is $2\pi$-periodic if and only if $x$
is $(2\pi/\beta)$-periodic. 
 

Let $L_0: C_{2\pi}^1(\mathbb{R};\,\mathbb{R}^{N})\rightarrow C_{2\pi}(\mathbb{R};\,\mathbb{R}^{N})$ be defined by $L_0
y(t)=\dot{y}(t), t\in \mathbb{R}$
and $K: C^1_{2\pi}(\mathbb{R};\mathbb{R}^N)\rightarrow \mathbb{R}^{N}$
 be defined by 
 \begin{align}\label{new-K-map}
 K(y)=\frac{1}{2\pi}\int_0^{2\pi}y(t) \mathrm{d} t.
 \end{align}

Define the map $\tilde{\mathcal{F}}:
W\times\mathbb{R}^2\rightarrow C^1_{2\pi}(\mathbb{R};\mathbb{R}^N)$ by
\begin{align}\label{tilde-F}
&\widetilde{\mathcal{F}}(y,\,\sigma,\,\beta):=y-(L_0+K)^{-1}
\left[\frac{1}{\beta} \tilde{N}_0(y, \,\sigma,\,\beta)+K (y)\right].
\end{align} We suppose that the set defined by
\[
B(y_0,\,\sigma_0,\,\beta_0; r,\,\rho)=\{(y,\,\sigma,\,\beta):
\|y-y_\sigma\|_{C^1}<r,\,|(\sigma,\,\beta)-(\sigma_0,\,\beta_0)|<\rho\},
 \]  is a  special neighborhood  of $\widetilde{\mathcal{F}}$
  which satisfies 
  \begin{enumerate}
  \item[i)] 
   $\widetilde{\mathcal{F}}(y,\,\sigma,\,\beta)\neq 0$ for every $(y,\,\sigma,\,\beta)\in \overline{B(y_0,\,\sigma_0,\,\beta_0; r,\,\rho)}$ with $|(\sigma,\,\beta)-(\sigma_0,\,\beta_0)|=\rho$ and $\|y-y_\sigma\|_{C^1}\neq 0$;
   \item[ii)] $(y_0,\,\sigma_0,\,\beta_0)$ is the only isolated center in 
$\overline{B(y_0,\,\sigma_0,\,\beta_0; r,\,\rho)}$. 
\end{enumerate}
Before we state and prove our local Hopf bifurcation theorem, we
need the following technical  Lemmas.
\begin{lemma}\label{new-bounded-inverse}
Let $L_0: C_{2\pi}^1(\mathbb{R};\,\mathbb{R}^{N})\rightarrow C_{2\pi}(\mathbb{R};\,\mathbb{R}^{N})$ be defined by $L_0
y(t)=\dot{y}(t), t\in \mathbb{R}$ and let $K: C_{2\pi}^1(\mathbb{R};\,\mathbb{R}^{N})\rightarrow \mathbb{R}^{N}$
 be defined at (\ref{new-K-map}). Then the inverse  $(L_0+K)^{-1}: C_{2\pi}(\mathbb{R};\,\mathbb{R}^{N})\rightarrow C_{2\pi}^1(\mathbb{R};\,\mathbb{R}^{N})$ exists and is continuous. 
\end{lemma}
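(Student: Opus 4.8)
The plan is to invert $L_0+K$ by hand. Fix $h\in C_{2\pi}(\mathbb{R};\mathbb{R}^N)$ and look for $y\in C^1_{2\pi}(\mathbb{R};\mathbb{R}^N)$ solving $\dot y(t)+K(y)=h(t)$ for all $t\in\mathbb{R}$. Writing $c:=K(y)=\frac{1}{2\pi}\int_0^{2\pi}y(s)\,\mathrm{d}s\in\mathbb{R}^N$ for the (still unknown) mean value, the equation reads $\dot y(t)=h(t)-c$, and integrating over $[0,t]$ gives $y(t)=y(0)+\int_0^t h(s)\,\mathrm{d}s-ct$. Two consistency conditions then determine everything: first, $2\pi$-periodicity of $y$ forces $\int_0^{2\pi}(h(s)-c)\,\mathrm{d}s=0$, i.e.\ $c=\frac{1}{2\pi}\int_0^{2\pi}h(s)\,\mathrm{d}s$, which fixes $c$ uniquely in terms of $h$; second, plugging the formula for $y$ back into $c=K(y)$ produces a single affine equation for $y(0)$ in which the coefficient of $y(0)$ equals $1$, so $y(0)$ is also uniquely determined by $h$. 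Hence for each $h$ there is exactly one $y$, the map $L_0+K:C^1_{2\pi}(\mathbb{R};\mathbb{R}^N)\to C_{2\pi}(\mathbb{R};\mathbb{R}^N)$ is a bijection, and the constructed $y$ lies in $C^1_{2\pi}$ because $\dot y=h-c$ is continuous and $2\pi$-periodic. (Injectivity can alternatively be seen at once: $(L_0+K)y=0$ makes $\dot y\equiv -c$ constant, periodicity forces $c=0$, so $y$ is constant and $y=K(y)=c=0$.)

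It remains to prove that $(L_0+K)^{-1}:C_{2\pi}(\mathbb{R};\mathbb{R}^N)\to C^1_{2\pi}(\mathbb{R};\mathbb{R}^N)$ is continuous with respect to the sup norm on the source and the $C^1$ norm on the target. I would do this either straight from the formulas above --- one reads off $|c|\le\|h\|$, then a bound $|y(0)|\le C_1\|h\|$ from the affine equation, hence $\sup_t|y(t)|\le C_2\|h\|$ and $\sup_t|\dot y(t)|=\sup_t|h(t)-c|\le 2\|h\|$, so that $\|y\|_{C^1}\le C\|h\|$ with $C$ independent of $h$ --- or, more cheaply, by invoking the bounded inverse theorem: $C^1_{2\pi}(\mathbb{R};\mathbb{R}^N)$ and $C_{2\pi}(\mathbb{R};\mathbb{R}^N)$ are Banach spaces, $L_0+K$ is bounded since $\|(L_0+K)y\|\le\|\dot y\|+|K(y)|\le 2\|y\|_{C^1}$, and we have just shown it is bijective, so $(L_0+K)^{-1}$ is automatically bounded.

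There is no serious obstacle here; the only thing to be alert to is that this lemma is \emph{not} a restatement of Lemma~\ref{bounded-inverse}, which only asserts that $(L_0+K)^{-1}$ is compact as a self-map of $C_{2\pi}(\mathbb{R};\mathbb{R}^N)$. What is needed now is that the same operator actually takes values in the smaller space $C^1_{2\pi}(\mathbb{R};\mathbb{R}^N)$ and is continuous into the stronger $C^1$ topology --- a point the explicit solution formula renders transparent. (Consistently, the compactness in Lemma~\ref{bounded-inverse} is just the boundedness proved here followed by the compact embedding $C^1_{2\pi}(\mathbb{R};\mathbb{R}^N)\hookrightarrow C_{2\pi}(\mathbb{R};\mathbb{R}^N)$.)
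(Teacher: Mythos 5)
Your proof is correct. The explicit inversion is sound: from $\dot y(t)=h(t)-c$ periodicity forces $c=\frac{1}{2\pi}\int_0^{2\pi}h(s)\,\mathrm{d}s$, the consistency condition $K(y)=c$ then determines $y(0)$ uniquely (the coefficient of $y(0)$ is indeed $1$), and either the explicit bound $\|y\|_{C^1}\le C\|h\|$ or the bounded inverse theorem gives continuity into the $C^1$ topology. The paper itself offers no argument here beyond citing the proof of Lemma~3.1 of \cite{MR2644135} for bijectivity and continuity of $(L_0+K)^{-1}$ into $C^1_{2\pi}$, so your write-up simply supplies in full the computation that the paper delegates to that reference; your closing remark correctly identifies the point of the lemma, namely that the target space and topology are strengthened relative to Lemma~\ref{bounded-inverse}, with compactness there recovered via the compact embedding $C^1_{2\pi}\hookrightarrow C_{2\pi}$.
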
 
\begin{proof} By the proof of Lemma 3.1 in \cite{MR2644135}, $L_0+K:  C_{2\pi}^1(\mathbb{R};\,\mathbb{R}^{N})\rightarrow C_{2\pi}(\mathbb{R};\,\mathbb{R}^{N})$ is one-to-one and onto. Moreover,  
  $(L_0+K)^{-1}: C_{2\pi}(\mathbb{R};\,\mathbb{R}^{N})\rightarrow C_{2\pi}^1(\mathbb{R};\,\mathbb{R}^{N})$ is continuous.\qed
\end{proof}

\begin{lemma}\label{new-new-bounded-N-0}
For any $\sigma\in\mathbb{R}$ and $\beta>0$, the map $N_1(\cdot,\,\sigma,\,\beta): W\rightarrow C_{2\pi}^1(\mathbb{R};\,\mathbb{R}^{N})$ defined by (\ref{N-1-definition}) is continuous.
\end{lemma}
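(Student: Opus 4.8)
The plan is to use that $N_1(y,\sigma,\beta)(t)=f(y(t),\,y(t-\beta z(t)),\,\sigma)$ depends on $y\in W$ both directly and through the delay $z$, which is implicitly determined by $z(t)=g(y(t),\,y(t-\beta z(t)),\,\sigma)$ as in system~(\ref{new-eq-3-4}); I would first show that the solution map $S:W\to C^1_{2\pi}(\mathbb{R};\mathbb{R})$, $y\mapsto z$, is continuous, and then deduce continuity of $N_1$ in the $C^1$ topology from the continuity and local uniform boundedness of $f,\partial_1 f,\partial_2 f$ guaranteed by (S1). Fix $\sigma\in\mathbb{R}$ and $\beta>0$; for $y\in W$ the second part of Lemma~\ref{Lemma-2-1}, applied to the $\beta$-rescaled algebraic equation, provides the unique such $z\in C^1_{2\pi}(\mathbb{R};\mathbb{R})$, and, after shrinking $\epsilon_0$ if necessary, we may assume that $\beta\|\dot y\|\,L_g<\tfrac12$ for every $y\in W$, where $L_g$ is a bound for $|\partial_1 g|$ and $|\partial_2 g|$ on a fixed compact neighbourhood of the stationary triple $(x_{\sigma_0},x_{\sigma_0},\sigma_0)$ containing all the argument triples occurring for $y\in W$.

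Step 1 (continuity of $S$). For $y,y'\in W$ with delays $z=S(y)$, $z'=S(y')$, subtract the two defining relations, insert $g(y'(t),y(t-\beta z(t)),\sigma)$, and apply the Integral Mean Value Theorem to each of the two resulting differences; bounding $|y(t-\beta z(t))-y'(t-\beta z'(t))|\le\beta\|\dot y\|\,\|z-z'\|+\|y-y'\|$ yields
\[
\|z-z'\|\le L_g\|y-y'\|+L_g\bigl(\beta\|\dot y\|\,\|z-z'\|+\|y-y'\|\bigr),
\]
whence $\|z-z'\|\le 4L_g\|y-y'\|$ by the choice of $\epsilon_0$. For the derivatives, differentiate $z(t)=g(y(t),y(t-\beta z(t)),\sigma)$ in $t$ to get
\[
\dot z(t)\,\bigl(1+\beta\,\partial_2 g(\cdots)\,\dot y(t-\beta z(t))\bigr)=\partial_1 g(\cdots)\,\dot y(t)+\partial_2 g(\cdots)\,\dot y(t-\beta z(t)),
\]
where $(\cdots)=(y(t),y(t-\beta z(t)),\sigma)$; the factor in parentheses lies in a fixed neighbourhood of $1$ (this is precisely the Implicit Function Theorem estimate $\partial F/\partial a\approx1$ from the proof of Lemma~\ref{Lemma-2-1}), so it is bounded away from $0$ and $\dot z$ is a continuous, uniformly bounded expression in $y,\dot y,z$ and the shifts $y(\cdot-\beta z(\cdot)),\dot y(\cdot-\beta z(\cdot))$. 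Given $y_n\to y$ in $C^1_{2\pi}$, the $C^0$ bound gives $z_n\to z$ uniformly; since $\dot y$ is uniformly continuous (being continuous and periodic) one also gets $y_n(\cdot-\beta z_n(\cdot))\to y(\cdot-\beta z(\cdot))$ and $\dot y_n(\cdot-\beta z_n(\cdot))\to\dot y(\cdot-\beta z(\cdot))$ uniformly; feeding these and the continuity of $\partial_1 g,\partial_2 g$ into the formula for $\dot z$ gives $\dot z_n\to\dot z$ uniformly, i.e. $S(y_n)\to S(y)$ in $C^1_{2\pi}$.

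Step 2 (continuity of $N_1$). First, for $y\in W$ the function $N_1(y,\sigma,\beta)$ is $2\pi$-periodic and $C^1$ in $t$ with
\[
\frac{d}{dt}N_1(y,\sigma,\beta)(t)=\partial_1 f(\cdots)\,\dot y(t)+\partial_2 f(\cdots)\,\dot y(t-\beta z(t))\,\bigl(1-\beta\dot z(t)\bigr),\qquad(\cdots)=(y(t),y(t-\beta z(t)),\sigma),
\]
which is bounded on $\mathbb{R}$, so $N_1$ indeed maps $W$ into $C^1_{2\pi}(\mathbb{R};\mathbb{R}^N)$. Now let $y_n\to y$ in $W$. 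By Step 1, $z_n\to z$ in $C^1_{2\pi}$, so the argument triples $(y_n(t),y_n(t-\beta z_n(t)),\sigma)$ converge to $(y(t),y(t-\beta z(t)),\sigma)$ uniformly in $t$ while staying in a common compact set on which, by (S1), $f,\partial_1 f,\partial_2 f$ are uniformly continuous and uniformly bounded. Inserting the uniform convergences of $y_n,\dot y_n,z_n,\dot z_n$ and of the composed shifts into the two displayed formulas shows that $N_1(y_n,\sigma,\beta)$ and its $t$-derivative converge uniformly to those of $N_1(y,\sigma,\beta)$; hence $N_1(y_n,\sigma,\beta)\to N_1(y,\sigma,\beta)$ in $C^1_{2\pi}(\mathbb{R};\mathbb{R}^N)$.

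The step I expect to be the main obstacle is Step 1: in contrast to the $C(\mathbb{R};\mathbb{R}^N)$ setting used for system~(\ref{SDDE-general}), here the delay is genuinely state-dependent and only implicitly given, so one must control $\dot z$ — not merely $z$ — as a function of $y$, which is exactly why the $C^1$ topology is used and why the invertibility of $1+\beta\,\partial_2 g(\cdots)\,\dot y(\cdot-\beta z(\cdot))$, a quantitative version of the Implicit Function Theorem set-up in Lemma~\ref{Lemma-2-1}, enters in an essential way.
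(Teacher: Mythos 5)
Your proposal is correct and follows essentially the same route as the paper: first establish $C^1$-convergence of the implicitly defined delay $z$ (sup-norm estimate from the algebraic equation, then the differentiated relation with the denominator $1+\beta\,\partial_2 g(\cdots)\,\dot y(\cdot-\beta z(\cdot))$ bounded away from zero), then pass to $N_1$ and its $t$-derivative via the chain-rule formula. Your Step 1 is in fact slightly more complete than the paper's (which only asserts the uniform convergence $z_n\to z_0$), and your displayed identity for $\dot z$ is the correct one — the paper's equation (\ref{dot-zn-eq}) omits the term $\partial_2 g(\cdots)\,\dot y_n(t-\beta z_n(t))$ from the numerator.
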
 
\begin{proof} Let $\{y_n\}_{n=1}^\infty\subset W$ be a convergent sequence with limit $y_0\in W$.  By Lemma~\ref{Lemma-2-1}, $\{y_n\}_{n=1}^\infty\subset W$ uniquely determines a sequence $\{z_n\}_{n=1}^\infty\subset C_{2\pi}^1(\mathbb{R};\,\mathbb{R})$ satisfying
\[
\tau_n(t)=g(y_n(t),\,y_n(t-\tau_n(t)),\,\sigma),\,t\in\mathbb{R}.
\]
Moreover, there exists $z_0\in C_{2\pi}^1(\mathbb{R};\,\mathbb{R})$ such that
$\tau_0(t)=g(y_0(t),\,y_0(t-\tau_0(t)),\,\sigma),\,t\in\mathbb{R}$ and
 \begin{align}\label{zn-z0}
\lim_{n\rightarrow\infty}\sup_{t\in[0,\,2\pi]}|z_n(t)-z_0(t)|=0.
\end{align}
By taking derivatives on both sides of $z_n(t)=g(y_n(t),\,y(t-\beta z_n(t),\,\sigma)$ we obtain that
\begin{align}\label{dot-zn-eq}
\dot{z}_n(t)=\frac{\partial_1 g(y_n(t),\,y_n(t-\beta z_n(t)),\,\sigma)\dot{y}_n(t)}{1+\beta\partial_2 g(y_n(t),\,y_n(t-\beta z_n(t)),\,\sigma)\dot{y}_n(t-\beta z_n(t))}.
\end{align}
Notice that, we have
\begin{align}\label{dot-z0-eq}
\dot{z}_0(t)=\frac{\partial_1 g(y_0(t),\,y_0(t-\beta z_0(t)),\,\sigma)\dot{y}_0(t)}{1+\beta\partial_2 g(y_0(t),\,y_0(t-\beta z_0(t)),\,\sigma)\dot{y}_0(t-\beta z_0(t))}.
\end{align}Using the Triangle Inequality and the Integral Mean Value Theorem, and noticing from (S2) that $g$ is $C^2$, we can show that
\begin{align}\label{dot-zn-z0}
\sup_{t\in[0,\,2\pi]}&\left|\frac{\partial_1 g(y_n(t),\,y_n(t-\beta z_n(t)),\,\sigma)\dot{y}_n(t)}{1+\beta\partial_2 g(y_n(t),\,y_n(t-\beta z_n(t)),\,\sigma)\dot{y}_n(t-\beta z_n(t))}\right.\notag\\
&-\left.
\frac{\partial_1 g(y_0(t),\,y_0(t-\beta z_0(t)),\,\sigma)\dot{y}_0(t)}{1+\beta\partial_2 g(y_0(t),\,y_0(t-\beta z_0(t)),\,\sigma)\dot{y}_0(t-\beta z_0(t))}
\right|\notag\\
&\rightarrow 0, \,{\mbox{as }\, n\rightarrow\infty.}
\end{align}Therefore, by (\ref{dot-zn-eq}) and (\ref{dot-z0-eq}) we have $\lim_{n\rightarrow\infty}\sup_{t\in[0,\,2\pi]}|\dot{z}_n(t)-\dot{z}_0(t)|=0$ which combined with (\ref{zn-z0}) leads to $\lim_{n\rightarrow\infty}\|z_n- z_0\|_{C^1}=0.$


Next we show that $N_1: W\rightarrow C_{2\pi}^1(\mathbb{R};\,\mathbb{R}^{N})$ defined by
$
N_1(y,\,\sigma,\,\beta)(t)=f(y(t),\,y(t-\beta z(t)),\,\sigma)
$ is continuous.  That is, 
\begin{align}\label{N-1-continuity}
\lim_{n\rightarrow\infty}\left\|N_1(y_n(t),\,y_n(t-\beta z_n(t)),\,\sigma)-N_1(y_0(t),\,y_0(t-\beta z_0(t)),\,\sigma)\right\|_{C^1}=0.
\end{align}
By the proof of Lemma 3.2 in \cite{MR2644135}, we know that the restriction $N_1\vline_{\,C_{2\pi}(\mathbb{R};\,\mathbb{R}^{N})}$ is a continuous map from $C_{2\pi}(\mathbb{R};\,\mathbb{R}^{N})$ to $C_{2\pi}(\mathbb{R};\,\mathbb{R}^{N})$.  Therefore, we have
\begin{align}\label{N-1-sup}
\lim_{n\rightarrow\infty}\sup_{t\in[0,\,2\pi]}\left|f(y_n(t),\,y_n(t-\beta z_n(t)),\,\sigma)-f(y_0(t),\,y_0(t-\beta z_0(t)),\,\sigma)\right|=0.
\end{align}
Moreover, since  $\lim_{n\rightarrow\infty}\|y_n- y_0\|_{C^1}=0$
 and $\lim_{n\rightarrow\infty}\|z_n- z_0\|_{C^1}=0.$
 we can use the Triangle Inequality and the Integral Mean Value Theorem to obtain that 
 \begin{align}\label{dot-N-1-sup}
&\lim_{n\rightarrow\infty}\sup_{t\in[0,\,2\pi]}\left|\frac{\mathrm{d}}{\mathrm{d}t}N_1(y_n,\,y_n(t-\tau_n(t)),\,\sigma)-\frac{\mathrm{d}}{\mathrm{d}t}N_1(y_0,\,y_0(t-\tau_0(t)),\,\sigma)\right|\notag \\
=&\lim_{n\rightarrow\infty} \sup_{t\in[0,\,2\pi]}
\left|\partial_1f(y_n(t),\,y_n(t-\beta z_n(t)),\,\sigma)\dot{y}_n(t)+\partial_2f(y_n(t),\,y_n(t-\beta z_n(t)),\,\sigma)\right.\notag\\
& \left.\times\dot{y}_n(t-\beta z_n(t)) (1-\beta\dot{z}_n(t))-\partial_1f(y_0(t),\,y_0(t-\beta z_0(t)),\,\sigma)\dot{y}_0(t)-\partial_2f(y_0(t),\,y_0(t-\beta z_0(t)),\,\sigma)\right.\notag\\
& \left.\times\dot{y}_0(t-\beta z_0(t)) (1-\beta\dot{z}_0(t))\right|,\notag\\
& =0.
\end{align}By (\ref{N-1-sup}) and (\ref{dot-N-1-sup}), $N_1: W\rightarrow  C_{2\pi}^1(\mathbb{R};\,\mathbb{R}^{N})$ is continuous.\qed
\end{proof}

To establish the $S^1$-degree on some special neighborhood near the stationary state, we have
\begin{lemma}\label{new-lemma-3-1}
Assume $($\mbox{{S1}}$\,)$--$($\mbox{{S3}}$\,)$ hold. Let $L_0$ and $K$ be as in
Lemma~\ref{new-bounded-inverse} and $\tilde{N}_1:W\times\mathbb{R}^2\rightarrow C_{2\pi}^1(\mathbb{R};\mathbb{R}^N)$ be as in
$($\ref{new-eq-3-5}$\,)$.  Let $\tilde{\mathcal{F}}:
W\times\mathbb{R}^2\rightarrow C_{2\pi}^1(\mathbb{R};\mathbb{R}^N)$ be defined at (\ref{tilde-F}).  If $B(y_0,\,\sigma_0,\,\beta_0; r,\,\rho)$ is a special neighborhood of $\widetilde{\mathcal{F}}$    with $0<\rho<\beta_0$, then there exists
$r'\in (0,\, r]$ such that the neighborhood
\[
B(y_0,\,\sigma_0,\,\beta_0;
r',\,\rho)=\{(u,\,\sigma,\,\beta):
\|y-y_\sigma\|_{C^1}<r',\,|(\sigma,\,\beta)-(\sigma_0,\,\beta_0)|<\rho\}
\] satisfies
\[
\dot{y}(t)\not\equiv \frac{1}{\beta}  f(y(t),y(t-\beta z(t)),\,\sigma)
\]
for $(y,\,\sigma,\,\beta)\in\overline{B(y_0,\,\sigma_0,\,\beta_0;
r',\,\rho)}$ with $y\neq y_\sigma$ and
$|(\sigma,\,\beta)-(\sigma_0,\beta_0)|=\rho$.
\end{lemma}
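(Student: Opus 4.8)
The plan is to mirror the argument of Lemma~\ref{lemma-3-1} verbatim, with the single modification that every occurrence of the sup-norm $\|\cdot\|$ is replaced by the $C^1$-norm $\|\cdot\|_{C^1}$ and every appeal to the Arzel\`a--Ascoli Theorem is upgraded accordingly. First I would argue by contradiction: if the conclusion fails, then for every $r'\in(0,\,r]$ there is a nonconstant $2\pi$-periodic $(y,\,\sigma,\,\beta)$ with $0<\|y-y_\sigma\|_{C^1}<r'$, $|(\sigma,\,\beta)-(\sigma_0,\,\beta_0)|=\rho$, and $\dot y(t)=\tfrac1\beta f(y(t),\,y(t-\beta z(t)),\,\sigma)$, where $z$ is the unique $C^1$-periodic function attached to $y$ by Lemma~\ref{Lemma-2-1}(ii). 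This produces a sequence $\{(y_k,\,\sigma_k,\,\beta_k)\}$ with $\|y_k-y_{\sigma_k}\|_{C^1}\to 0$, $\beta_k\geq\beta_0-\rho>0$, and (after passing to a subsequence) $(\sigma_k,\,\beta_k)\to(\sigma^*,\,\beta^*)$ with $|(\sigma^*,\,\beta^*)-(\sigma_0,\,\beta_0)|=\rho$ and $\beta^*>0$.

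Next I would normalize, setting $v_k=(y_k-y_{\sigma_k})/\|y_k-y_{\sigma_k}\|_{C^1}$, so that $\|v_k\|_{C^1}=1$; apply the Integral Mean Value Theorem to $f$ (which is $C^2$ by (S1)) exactly as in (\ref{eq-3-19})--(\ref{eq-3-22}) to obtain the linearized identity for $\dot v_k$; and establish, via the same Lipschitz estimate as in the derivation of (\ref{uniform-convergence-a}), that $\|\partial_i f_k(\sigma_k,\,s)-\partial_i f(\sigma^*)\|\to 0$ uniformly in $s\in[0,1]$ for $i=1,\,2$. This already gives a uniform bound $\|\dot v_k\|<\tilde L$ and hence a uniform bound on $\|v_k\|_{C^1}$. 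To extract a $C^1$-convergent subsequence I now need a uniform bound on $\|\ddot v_k\|$, which I would get by differentiating the $\dot v_k$-equation once more: since $f$ is $C^2$, $\dot v_k$ is differentiable and $\ddot v_k$ is expressed through $\partial_i f_k$, $\dot v_k$, $\dot y_k$, and $\dot z_k$; the crucial point is that $\dot z_k$ is uniformly bounded because, as in Lemma~\ref{new-new-bounded-N-0} (formula (\ref{dot-zn-eq})), $\dot z_k$ is a continuous rational expression in the $y_k$'s whose denominator stays near $1$ for $x$ near a constant function. With $\{v_k\}$ bounded in $C^2$, Arzel\`a--Ascoli yields $v^*\in\{v\in C^1_{2\pi}:\|v\|_{C^1}=1\}$ with $v_{k_j}\to v^*$ in $C^1$. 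Passing to the limit in the $\dot v_k$-equation, using (\ref{eq-3-16})-type convergences and the limit $z_{k_j}\to z_{\sigma^*}$ for the delayed argument (as in (\ref{eq-3-24})--(\ref{eq-3-26})), I obtain
\[
\dot v^*(t)=\frac1{\beta^*}\partial_1 f(\sigma^*)\,v^*(t)+\frac1{\beta^*}\partial_2 f(\sigma^*)\,v^*(t-\beta^* z_{\sigma^*}).
\]
By (S3) the matrix $\partial_1 f(\sigma^*)+\partial_2 f(\sigma^*)$ is nonsingular, so $v=0$ is the only constant solution; since $\|v^*\|_{C^1}=1$, $v^*$ is a nonconstant periodic solution of the linearized equation. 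Lemma~\ref{linear-eq-center} then forces $(y_{\sigma^*},\,\sigma^*,\,\beta^*)$ to be a center of (\ref{new-eq-3-5}) inside $\overline{B(y_0,\,\sigma_0,\,\beta_0;r,\,\rho)}$, contradicting the hypothesis that this is a special neighborhood whose only center is $(y_0,\,\sigma_0,\,\beta_0)$.

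The main obstacle, compared with the proof of Lemma~\ref{lemma-3-1}, is the compactness step: working in $C^1_{2\pi}$ rather than $C_{2\pi}$ means a uniform $C^1$-bound on $\{v_k\}$ is not enough to apply Arzel\`a--Ascoli, and one genuinely needs the uniform second-derivative bound described above. This is where the regularity gain of Lemma~\ref{Lemma-2-1}(ii) ($\tau\in C^1$, with $\dot\tau$ controlled) and the hypothesis $f\in C^2$ in (S1) are essential; everything else is a routine transcription of the $\|\cdot\|\to\|\cdot\|_{C^1}$ bookkeeping. I would also note in passing that the $2\pi$-periodicity of each $z_k$ (hence the validity of $t-\beta_k z_k(t)\to+\infty$) is inherited from $y_k$ via the last paragraph of the proof of Lemma~\ref{Lemma-2-1}.
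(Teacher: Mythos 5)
Your argument is correct in outline, but it takes a substantially heavier route than the paper's. The paper's proof is a one-step reduction: since $\|y_k-y_{\sigma_k}\|_{C^1}\to 0$ implies $\|y_k-y_{\sigma_k}\|\to 0$ in the supremum norm, one simply reruns the proof of Lemma~\ref{lemma-3-1} verbatim (normalizing $v_k$ by the sup-norm, using only the $C^0$ Arzel\`a--Ascoli step) to produce a nonconstant periodic solution of (\ref{new-eq-3-18}); the resulting contradiction -- a second center of (\ref{new-eq-3-5}) in the special neighborhood -- is a statement about the characteristic equation and does not depend on the norm in which the $v_k$ converge. You instead insist on extracting a limit $v^*$ in the $C^1$ topology, which forces you to differentiate the $\dot v_k$-equation, control $\dot z_k$ via (\ref{dot-zn-eq}), and obtain a uniform bound on $\ddot v_k$. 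That program does go through ($f\in C^2$ and $y_k\in C^2$ make the extra differentiation legitimate, the denominator in (\ref{dot-zn-eq}) stays near $1$ because $\dot y_k\to 0$ uniformly, and your $C^1$-normalization still yields $v^*\not\equiv 0$ and hence nonconstant by (S3)), but it is unnecessary work: the conclusion of the lemma only requires exhibiting one nonconstant periodic solution of the limiting linear system, for which sup-norm convergence of $v_k$ suffices. So your "main obstacle" paragraph identifies an obstacle that the statement does not actually present; the trade-off is that your version delivers slightly more (convergence of the blow-up sequence in $C^1$) at the cost of a second-derivative estimate the paper avoids entirely.
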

\begin{proof}We prove by contradiction. 
Suppose the statement is not true, then for any $0<r'\leq r$, there exists
$(y,\,\sigma,\,\beta)$ such that
$0<\|y-y_\sigma\|_{C^1}<r',\,|(\sigma,\,\beta)-(\sigma_0,\,\beta_0)|=\rho$
and
\begin{align}\label{new-eq-3-13}
\dot{y}(t)=\frac{1}{\beta} 
f(y(t),y(t-\beta z(t)),\,\sigma) \textrm{ for } t\in\mathbb{R}.
\end{align}
Then there exists a sequence of nonconstant periodic solutions
$\{(y_k,\,\sigma_k,\,\beta_k)\}_{k=1}^{\infty}$
of (\ref{new-eq-3-13})
 such that
\begin{align}\label{new-eq-3-14}
&\lim_{k\rightarrow +\infty}\|y_k-y_{\sigma_k}\|_{C^1}=0,\,\,
|(\sigma_k,\,\beta_k)-(\sigma_0,\,\beta_0)|=\rho, \intertext{and}
&\dot{y}_k(t)=\frac{1}{\beta_k} 
f(y_k(t),y_k(t-\beta_k z_k(t)),\,\sigma_k) 
 \textrm{ for } t\in\mathbb{R},\label{new-eq-3-15}
\end{align}where $z_k$ is chosen according to $y_k$ in light of Lemma~\ref{Lemma-2-1} so that $(y_k,\,z_k)$ is a solution of system~(\ref{new-eq-3-4}).

Note that $0<\rho<\beta_0$ implies that $\beta_k\geq \beta_0-\rho>0$
for every $k\in\mathbb{N}$. Also, since the sequence
$\{\sigma_k,\,\beta_k\}_{k=1}^{\infty}$ belongs to a bounded
neighborhood of $(\sigma_0,\,\beta_0)$ in $\mathbb{R}^2$, there
exists a convergent subsequence, still denoted by
$\{(\sigma_k,\,\beta_k)\}_{k=1}^{\infty}$ for notational simplicity, that converges to
$(\sigma^*,\,\beta^*)$ so that
$|(\sigma^*,\,\beta^*)-(\sigma_0,\,\beta_0)|=\rho$ and $\beta^*>0$.  Then we have
\begin{align}\label{new-eq-3-16}
&\lim_{k\rightarrow +\infty}\|y_k-y_{\sigma_k}\|_{C^1}=0,\,\,
\lim_{k\rightarrow+\infty}|(\sigma_k,\,\beta_k)-(\sigma^*,\,\beta^*)|=0,
\intertext{and} &\dot{y}_k(t)=\frac{1}{\beta_k} 
f(y_k(t),y_k(t-\beta_k z_k(t)),\,\sigma_k)  \textrm{ for } t\in\mathbb{R}.\label{new-eq-3-17}
\end{align}
We need to show that the system
\begin{align}
\dot{v}(t)=\frac{1}{\beta^*} \partial_1 f(\sigma^*) v(t)+\frac{1}{\beta^*} \partial_2 f(\sigma^*)  v(t-\beta^* z_{\sigma^*}),\label{new-eq-3-18}
\end{align}
has a nonconstant periodic solution which contradicts the
assumption that $(y_{\sigma_0},\,\sigma_0,\,\beta_0)$ is the only
center of (\ref{new-eq-3-5}) in $\overline{B(u_0,\,\sigma_0,\,\beta_0;
r,\,\rho)}$. But (\ref{new-eq-3-16}) implies that
 \begin{align}\label{new-eq-3-16-b}
&\lim_{k\rightarrow +\infty}\|y_k-y_{\sigma_k}\|=0,\,\,
\lim_{k\rightarrow+\infty}|(\sigma_k,\,\beta_k)-(\sigma^*,\,\beta^*)|=0,
\end{align} Then by the same argument in the proof of Lemma~\ref{lemma-3-1}, (\ref{new-eq-3-18}) has a nonconstant periodic solution.
 This completes the proof.\hfill{ }\qed
\end{proof}
To apply the homotopy argument of $S^1$-degree, we show the following
\begin{lemma}\label{new-homotopy-lemma}
Assume (S1)--(S3) hold. Let $L_0$, $K$, $\tilde{N}_1$, $\tilde{\mathcal{F}}$ be as in Lemma~\ref{new-lemma-3-1}  and  $N_1: W\times\mathbb{R}^2\rightarrow C_{2\pi}^1(\mathbb{R};\mathbb{R}^N)$ be as in $($\ref{new-eq-3-4}$\,)$. Define the map $\mathcal{F}:
W\times\mathbb{R}^2\rightarrow C_{2\pi}^1(\mathbb{R};\mathbb{R}^N)$  by
\begin{align*}
&\mathcal{F}(y,\,\sigma,\,\beta):=y-(L_0+K)^{-1}\left[\frac{1}{\beta}N_1(y,\,\sigma,\,\beta)+K(y)\right].
\end{align*}
If $\mathcal{U}=B(y_0,\,\sigma_0,\,\beta_0; r,\,\rho)$ $\subseteq
W\times\mathbb{R}^2$ is a special neighborhood of
$\tilde{\mathcal{F}}$  with $0<\rho<\beta_0$, then there exists
$r'\in (0,\,r]$ such that
$\mathcal{F}_{\theta}=(\mathcal{F},\,\theta)$ and
$\tilde{\mathcal{F}}_{\theta}=(\tilde{\mathcal{F}},\,\theta)$  are
homotopic on $\overline{B(y_0,\,\sigma_0,\,\beta_0; r',\,\rho)}$,
where $\theta$ is a completing function (or Ize's function) defined on
$\overline{B(y_0,\,\sigma_0,\,\beta_0; r',\,\rho)}$ which satisfies
\begin{enumerate}
\item[i)] $\theta(y_\sigma,\,\sigma,\,\beta)=-|(\sigma,\,\beta)-(\sigma_0,\,\beta_0)|$ if $(y_\sigma,\,\sigma,\,\beta)\in \bar{\mathcal{U}};$
\item[ii)] $\theta(y,\,\sigma,\,\beta)=r'$ if $\|y-y_\sigma\|_{C^1}=r'$.
\end{enumerate}
\end{lemma}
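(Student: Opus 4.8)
The plan is to reproduce the contradiction argument of Lemma~\ref{homotopy-lemma}, now carried out in $C^1_{2\pi}(\mathbb{R};\mathbb{R}^N)$ and with due care for the fact that the delay $z$ in system~(\ref{new-eq-3-4}) is determined \emph{implicitly}. By Lemma~\ref{new-lemma-3-1} there is $r'\in(0,r]$ for which $\mathcal{F}$ and $\tilde{\mathcal{F}}$ are nonzero on the part of $\partial\overline{B(y_0,\sigma_0,\beta_0;r',\rho)}$ with $y\neq y_\sigma$ and $|(\sigma,\beta)-(\sigma_0,\beta_0)|=\rho$, so $\mathcal{F}_\theta=(\mathcal{F},\theta)$ and $\tilde{\mathcal{F}}_\theta=(\tilde{\mathcal{F}},\theta)$ are $\mathcal{U}$-admissible and their $S^1$-degrees are defined. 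I would argue by contradiction: if for every such $r'$ the maps $\mathcal{F}_\theta$ and $\tilde{\mathcal{F}}_\theta$ were not homotopic, the linear homotopy $(\alpha\mathcal{F}+(1-\alpha)\tilde{\mathcal{F}},\,\theta)$ would have a zero on the boundary; since $\theta=r'>0$ wherever $\|y-y_\sigma\|_{C^1}=r'$, this zero must occur at a point with $|(\sigma,\beta)-(\sigma_0,\beta_0)|=\rho$, $0<\|y-y_\sigma\|_{C^1}<r'$, and some $\alpha\in[0,1]$, where $\alpha\mathcal{F}+(1-\alpha)\tilde{\mathcal{F}}=0$, i.e.
\[
\dot{y}(t)=\frac{\alpha}{\beta}f\bigl(y(t),y(t-\beta z(t)),\sigma\bigr)+\frac{1-\alpha}{\beta}\Bigl[\partial_1 f(\sigma)\bigl(y(t)-y_\sigma\bigr)+\partial_2 f(\sigma)\bigl(y(t-\beta z_\sigma)-y_\sigma\bigr)\Bigr],
\]
with $z$ the unique $C^1$ periodic function attached to $y$ by part~($ii$) of Lemma~\ref{Lemma-2-1}, satisfying $z(t)=g(y(t),y(t-\beta z(t)),\sigma)$. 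Letting $r'\to0$ produces a sequence $\{(y_k,\sigma_k,\beta_k,\alpha_k)\}$ with $\|y_k-y_{\sigma_k}\|_{C^1}\to0$, $|(\sigma_k,\beta_k)-(\sigma_0,\beta_0)|=\rho$, $\alpha_k\in[0,1]$, $y_k\neq y_{\sigma_k}$, solving the same equation with its data indexed by $k$.

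Next I would extract convergent subsequences and pass to the limit. Since $0<\rho<\beta_0$, $\beta_k\geq\beta_0-\rho>0$, and after passing to a subsequence $(\sigma_k,\beta_k,\alpha_k)\to(\sigma^*,\beta^*,\alpha^*)$ with $\beta^*\geq\beta_0-\rho>0$, $\alpha^*\in[0,1]$ and $|(\sigma^*,\beta^*)-(\sigma_0,\beta_0)|=\rho$. From $\|y_k-y_{\sigma_k}\|_{C^1}\to0$ and $y_{\sigma_k}\to y_{\sigma^*}$ I get $y_k\to y_{\sigma^*}$ in $C^1$, hence in particular $\|y_k-y_{\sigma_k}\|\to0$; and by the continuity of the solution operator $y\mapsto z$ established inside the proof of Lemma~\ref{new-new-bounded-N-0} (which rests on the Implicit Function Theorem of Lemma~\ref{Lemma-2-1}), $z_k\to z_{\sigma^*}$ in $C^1$, in particular uniformly, so $\{z_k,\beta_k\}$ is uniformly bounded and $\lim_{t\to+\infty}(t-\beta_k z_k(t))=+\infty$. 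Then I set $v_k(t)=(y_k(t)-y_{\sigma_k})/\|y_k-y_{\sigma_k}\|$ and expand $f$ by the Integral Mean Value Theorem exactly as in the proof of Lemma~\ref{homotopy-lemma}: by (S1), $\|\partial_i f_k(\sigma_k,s)-\partial_i f(\sigma^*)\|\to0$ uniformly in $s\in[0,1]$, whence $\|\dot v_k\|$ is uniformly bounded and Arzela--Ascoli yields a subsequence $v_{k_j}\to v^*$ uniformly with $\|v^*\|=1$. The uniform $C^1$ bound on $v_{k_j}$ together with $\beta_{k_j}z_{k_j}(\cdot)\to\beta^*z_{\sigma^*}$ uniformly gives $v_{k_j}(\cdot-\beta_{k_j}z_{k_j}(\cdot))\to v^*(\cdot-\beta^*z_{\sigma^*})$; passing to the limit in the normalized equation (where the $\alpha^*$ and $1-\alpha^*$ contributions collapse to the same linear expression) yields
\[
\dot v^*(t)=\frac{1}{\beta^*}\partial_1 f(\sigma^*)\,v^*(t)+\frac{1}{\beta^*}\partial_2 f(\sigma^*)\,v^*(t-\beta^*z_{\sigma^*}).
\]
By (S3) the matrix $\partial_1 f(\sigma^*)+\partial_2 f(\sigma^*)$ is nonsingular, so $v=0$ is the only constant solution; since $\|v^*\|=1$, $v^*$ is a nonconstant periodic solution. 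By Lemma~\ref{linear-eq-center}, $(y_{\sigma^*},\sigma^*,\beta^*)$ is then a center of (\ref{new-eq-3-5}) lying in $\overline{B(y_0,\sigma_0,\beta_0;r,\rho)}$, contradicting that $B(y_0,\sigma_0,\beta_0;r,\rho)$ is a special neighborhood of $\tilde{\mathcal{F}}$ containing only the center $(y_0,\sigma_0,\beta_0)$; this gives the asserted homotopy.

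The only genuinely new difficulty relative to Lemma~\ref{homotopy-lemma} is the implicit dependence of $z_k$ on $y_k$. For system~(\ref{SDDE-general}) the relevant function was given by an explicit formula, so the uniform bound and uniform convergence of the delays came for free; here I must instead invoke the continuity of the implicitly defined map $y\mapsto z$ proved in Lemma~\ref{new-new-bounded-N-0} to obtain $z_k\to z_{\sigma^*}$ uniformly. Once this is in hand and $\|y_k-y_{\sigma_k}\|\to0$ has been read off from the $C^1$ convergence, every remaining estimate is the $C^1$-analogue of one already carried out in the proof of Lemma~\ref{homotopy-lemma}, so I would not repeat the routine computations.
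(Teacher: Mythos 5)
Your proposal is correct and follows essentially the same route as the paper: the paper's own proof likewise sets up the linear homotopy, locates a boundary zero with $|(\sigma,\beta)-(\sigma_0,\beta_0)|=\rho$ and $0<\|y-y_\sigma\|_{C^1}<r'$, observes that $C^1$-convergence implies sup-norm convergence, and then simply re-invokes the normalization/Arzel\`a--Ascoli limit argument of Lemma~\ref{homotopy-lemma} to produce a nonconstant periodic solution of the limiting linear system and contradict the special-neighborhood hypothesis. Your additional care in justifying the uniform convergence $z_k\to z_{\sigma^*}$ for the implicitly defined delay (via the continuity of $y\mapsto z$ underlying Lemma~\ref{new-new-bounded-N-0}) is a point the paper leaves implicit, and is a worthwhile addition rather than a deviation.
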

\begin{proof} Since $\mathcal{U}=B(y_0,\,\sigma_0,\,\beta_0; r,\,\rho)$ $\subseteq
W\times\mathbb{R}^2$ is a special neighborhood of
$\tilde{\mathcal{F}}$  with $0<\rho<\beta_0$, then by Lemma~\ref{new-lemma-3-1}, both $\mathcal{F}_{\theta}=(\mathcal{F},\,\theta)$ and
$\tilde{\mathcal{F}}_{\theta}=(\mathcal{F},\,\theta)$ are $\mathcal{U}$-admissible. For contradiction, suppose that the conclusion is not true. Then for any $r'\in (0,\,r]$,
$\mathcal{F}_{\theta}=(\mathcal{F},\,\theta)$ and
$\tilde{\mathcal{F}}_{\theta}=(\tilde{\mathcal{F}},\,\theta)$  are not homotopic
on $\overline{B(y_0,\,\sigma_0,\,\beta_0; r',\,\rho)}$. That is, any homotopy map between
$\mathcal{F}_{\theta}$ and $\tilde{\mathcal{F}}_{\theta}$ has a zero on the boundary of $\overline{B(y_0,\,\sigma_0,\,\beta_0; r',\,\rho)}$. In particular, the linear homotopy $h(\cdot,\,\alpha):=\alpha \mathcal{F}_{\theta}+(1-\alpha)\tilde{\mathcal{F}}_{\theta}=(\alpha \mathcal{F} +(1-\alpha)\tilde{\mathcal{F}},\theta) $ has a zero on the boundary of $\overline{B(y_0,\,\sigma_0,\,\beta_0; r',\,\rho)}$, where $\alpha\in [0,\,1]$.

Note that $\theta(y,\,\sigma,\,\beta)>0$ if
$\|y-y_\sigma\|_{C^1}=r'$.
Then, there exist
$(y,\,\sigma,\,\beta)$ and $\alpha\in [0,\,1]$ such that
$\|y-y_\sigma\|_{C^1}<r',\,|(\sigma,\,\beta)-(\sigma_0,\,\beta_0)|=\rho$
and
\begin{align}
H(y,\,\sigma,\,\beta,\,\alpha):=\alpha \mathcal{F} +(1-\alpha)\tilde{\mathcal{F}}=0.\label{new-eq-3-28}
\end{align}
Since $r'>0$ is arbitrary in the interval $(0,\,r]$, there exists a
nonconstant sequence
$\{(y_k,\,\sigma_k,\,\beta_k,\,\alpha_k)\}_{k=1}^{\infty}$ of
solutions of (\ref{new-eq-3-28}) such that
\begin{align}
&      \lim_{k\rightarrow +\infty}\|y_k-y_{\sigma_k}\|_{C^1}=0,\,\,
       |(\sigma_k,\,\beta_k)-(\sigma_0,\,\beta_0)|=\rho,\, 0
       \leq \alpha_k\leq 1,
       \label{new-eq-3-29}
\intertext{and} &      H(y_k,\,\sigma_k,\,\beta_k,\,\alpha_k)=0, \mbox{ for all } k\in\mathbb{N}.
\label{new-eq-3-30}
\end{align}
Note that $0<\rho<\beta_0$ implies that $\beta_k\geq \beta_0-\rho>0$
for every $k\in\mathbb{N}$. From (\ref{new-eq-3-29}) we know that
$\{(\sigma_k,\,\beta_k,\,\alpha_k)\}_{k=1}^{\infty}$ belongs to a
compact subset of $\mathbb{R}^3$. Therefore, there exist  a
convergent subsequence, denoted for notational simplicity by
$\{(\sigma_k,\,\beta_k,\,\alpha_k)\}_{k=1}^{\infty}$ without loss of
generality, and $(\sigma^*,\,\beta^*,\,\alpha^*)\in \mathbb{R}^3$
such that $\beta^*\geq \beta_0-\rho>0$, $\alpha^*\in [0,\,1]$ and
\begin{align}\label{new-eq-3-31}
\lim_{k\rightarrow+\infty}
|(\sigma_k,\,\beta_k,\,\alpha_k)-(\sigma^*,\,\beta^*,\,\alpha^*)|=0.
\end{align}
By the same token for the proof of Lemma~\ref{lemma-3-1}, we show that the
system
\begin{align}
\dot{v}(t)=\frac{1}{\beta^*} \partial_1 f(\sigma^*)  v(t)+\frac{1}{\beta^*} \partial_2 f(\sigma^*) v(t-\beta^* z_{\sigma^*})\label{new-eq-3-32}
\end{align}
with $\partial_i f(\sigma^*),\, \partial_i g(\sigma^*),  i=1,\,2,$ defined at (\ref{eq-3-1}), has a nonconstant periodic solution which contradicts the
assumption that $B(u_0,\,\sigma_0,\,\beta_0; r,\,\rho)$ is a special neighborhood which contains an
isolated center of (\ref{new-eq-3-5}).  Since (\ref{new-eq-3-29}) implies
$   \lim_{k\rightarrow +\infty}\|y_k-y_{\sigma_k}\|=0$,  by the same argument in the proof of Lemma~\ref{new-homotopy-lemma} we know that system~\ref{new-eq-3-32} have a nonconstant periodic solution. This is a contradiction. \hfill{ }\qed
\end{proof}
Now we  are in the position to  prove the local Hopf bifurcation
theorem for system~(\ref{SDDE-general-original}).
\begin{theorem}\label{new-localhopf}
Assume $($S1$\,)$--$($S3$\,)$ hold. Let $(x_{\sigma_0},\, \sigma_0)$ be
an isolated center of system
$($\ref{eq-3-1}$\,)$.   If  the crossing number defined by $($\ref{crossingnumber}$\,)$ satisfies
\[\gamma(x_{\sigma_0},\, \sigma_0, \,\beta_0)\neq
0,\] then there exists a bifurcation of nonconstant periodic
solutions of $($\ref{SDDE-general-original}$\,)$ near
$(x_{\sigma_0}, \,\sigma_0)$. More precisely, there exists a
sequence  $\{(x_n, \,\sigma_n,
\beta_n)\}$ such that $\sigma_n\rightarrow\sigma_0$,
$\beta_n\rightarrow\beta_0$ as $n\rightarrow\infty$,  and
$\lim_{n\rightarrow\infty}\|x_n-x_{\sigma_0}\|_{C^1}=0$, where
\[
(x_n,\, \sigma_n)\in C^1(\mathbb{R};\mathbb{R}^{N})\times\mathbb{R}
\] is a nonconstant $2\pi/\beta_n$-periodic solution of system  $($\ref{SDDE-general}$\,)$.
\end{theorem}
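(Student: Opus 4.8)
The plan is to carry the $S^1$-equivariant degree argument of Theorem~\ref{localhopf} over to the $C^1$ setting, working in the Banach $S^1$-representation $W=O(\epsilon_0)\cap C^1_{2\pi}(\mathbb{R};\mathbb{R}^N)$ in place of $V=C_{2\pi}(\mathbb{R};\mathbb{R}^N)$. After the period normalization $(x(t),\tau(t))=(y(\beta t),z(\beta t))$ system~(\ref{SDDE-general-original}) becomes~(\ref{new-eq-3-4}); by Lemma~\ref{Lemma-2-1} every $2\pi$-periodic $y\in W$ determines a unique $2\pi$-periodic $z\in C^1_{2\pi}(\mathbb{R};\mathbb{R})$, so finding a $2\pi/\beta$-periodic solution of~(\ref{SDDE-general-original}) is equivalent to solving $\mathcal{F}(y,\sigma,\beta):=y-(L_0+K)^{-1}[\tfrac1\beta N_1(y,\sigma,\beta)+K(y)]=0$ on $W\times\mathbb{R}\times(0,\infty)$, with linearized companion $\widetilde{\mathcal{F}}$ built from $\tilde N_1$ as in Lemmas~\ref{new-lemma-3-1} and~\ref{new-homotopy-lemma}. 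I would then verify the six hypotheses (A1)--(A6) of Theorem~2.4 of \cite{MR2644135}.

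First, (A1): $C^1_{2\pi}(\mathbb{R};\mathbb{R}^N)$ carries the same $S^1$-isotypical decomposition $\overline{\bigoplus_{k\ge 0}V_k}$ (closure now in the $C^1$ norm, with convergence of Cesàro means), the summands $V_k$, $k\ge 1$, of functions $x\cos k\cdot+y\sin k\cdot$, $x,y\in\mathbb{R}^N$, being finite-dimensional and $S^1$-invariant. Next, (A2) and (A4): this is the one genuinely new point relative to the $C^0$ case of \cite{MR2644135}. The operator $(L_0+K)^{-1}$ gains one derivative: if $g\in C^1_{2\pi}$ then the solution $w$ of $L_0 w+Kw=g$ satisfies $\dot w=g-Kw$ with $Kw$ constant, so $w\in C^2_{2\pi}$; since $C^2_{2\pi}\hookrightarrow C^1_{2\pi}$ is compact by Arzel\`a--Ascoli, and $N_1(\cdot,\sigma,\beta):W\to C^1_{2\pi}$ is continuous by Lemma~\ref{new-new-bounded-N-0} (and the affine $\tilde N_1$ is continuous by (S1)), the maps $(L_0+K)^{-1}\circ[\tfrac1\beta N_1(\cdot,\sigma,\beta)+K]$ and $(L_0+K)^{-1}\circ[\tfrac1\beta \tilde N_1(\cdot,\sigma,\beta)+K]$ from $W$ to $C^1_{2\pi}$ are completely continuous, hence condensing. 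For (A3) I would take the two-dimensional manifold $M=\{(y_\sigma,\sigma,\beta):|(\sigma,\beta)-(\sigma_0,\beta_0)|\ \text{small}\}$ of trivial solutions; since~(\ref{new-eq-3-1-original}) coincides with~(\ref{eq-3-1}) and hence has the same characteristic matrix $\Delta_{(x_\sigma,\sigma)}$ as in~(\ref{character-matrix}), $(y_{\sigma_0},\sigma_0,\beta_0)$ is again the unique $V$-singular point of $\widetilde{\mathcal{F}}$ on $M$, and $M$ is locally the graph of $(\sigma,\beta)\mapsto y_\sigma$. For (A5), Lemmas~\ref{new-lemma-3-1} and~\ref{new-homotopy-lemma} furnish a special neighborhood $B(y_{\sigma_0},\sigma_0,\beta_0;r',\rho)$ on which $\mathcal{F}$ and $\widetilde{\mathcal{F}}$ are nonzero off the trivial branch at $|(\sigma,\beta)-(\sigma_0,\beta_0)|=\rho$, and on which $(\mathcal{F},\theta)$ is $S^1$-homotopic to $(\widetilde{\mathcal{F}},\theta)$ for a completing function $\theta$. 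For (A6), on $V_0$ the linearization of $\widetilde{\mathcal{F}}$ is $v_0\mapsto-\tfrac1\beta(\partial_1 f(\sigma)+\partial_2 f(\sigma))v_0$, an isomorphism by (S3).

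The degree computation is then verbatim that of Theorem~\ref{localhopf}: on each $V_k$, $k\ge 1$, the complex matrix representation of $D_y\widetilde{\mathcal{F}}(y_\sigma,\sigma,\beta)$ is $\tfrac1{ik\beta}\Delta_{(y_\sigma,\sigma)}(ik\beta)$, so $\mu_1(y_{\sigma_0},\sigma_0,\beta_0)=\epsilon\cdot\deg(\Psi_H,\mathscr{D}(\sigma_0,\beta_0))$ with $\mathscr{D}(\sigma_0,\beta_0)=(\sigma_0-\delta,\sigma_0+\delta)\times(\beta_0-\varepsilon,\beta_0+\varepsilon)$, $\Psi_H(\sigma,\beta)=\det\Delta_{(y_\sigma,\sigma)}(i\beta)$, and $\epsilon=\mathrm{sign}\det\bigl(-\tfrac1\beta(\partial_1 f(\sigma)+\partial_2 f(\sigma))\bigr)\neq 0$ by (S3); applying Lemma~2.1 of \cite{MR2644135} to $H(\sigma,\alpha,\beta)=\det\Delta_{(y_\sigma,\sigma)}(\alpha+i\beta)$ on $[\sigma_0-\delta,\sigma_0+\delta]\times\overline{\Omega}$ gives $\deg(\Psi_H,\mathscr{D}(\sigma_0,\beta_0))=\gamma(x_{\sigma_0},\sigma_0,\beta_0)\neq 0$ by~(\ref{crossingnumber}). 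Hence $\mu_1\neq 0$, and Theorem~2.4 of \cite{MR2644135} yields nonconstant $2\pi$-periodic solutions $(y_n,\sigma_n,\beta_n)$ with $\sigma_n\to\sigma_0$, $\beta_n\to\beta_0$ and $\|y_n-y_{\sigma_0}\|_{C^1}\to 0$; undoing the normalization produces the asserted $C^1$-convergent sequence of nonconstant $2\pi/\beta_n$-periodic solutions of~(\ref{SDDE-general-original}). The main obstacle I anticipate is precisely (A2)/(A4): in the $C^1$ topology $(L_0+K)^{-1}$ is not compact as a self-map of $C^1_{2\pi}$, so the condensing property of the fixed-point operators must be extracted from the regularity gain $C^1_{2\pi}\to C^2_{2\pi}$ combined with the compact embedding $C^2_{2\pi}\hookrightarrow C^1_{2\pi}$, which in turn rests on the $C^1$-continuous dependence of $z$ on $y$ from Lemma~\ref{Lemma-2-1} underpinning Lemma~\ref{new-new-bounded-N-0}. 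The remaining steps are a routine transcription of the proof of Theorem~\ref{localhopf} with $\|\cdot\|$ replaced throughout by $\|\cdot\|_{C^1}$.
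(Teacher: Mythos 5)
Your proposal is correct and follows the paper's own strategy almost step for step: normalize the period, work in $W=O(\epsilon_0)\cap C^1_{2\pi}(\mathbb{R};\mathbb{R}^N)$, verify (A1)--(A6) of Theorem~2.4 of \cite{MR2644135} using Lemmas~\ref{new-bounded-inverse}, \ref{new-new-bounded-N-0}, \ref{new-lemma-3-1} and \ref{new-homotopy-lemma}, and observe that the degree computation on each $V_k$ and the identity $\deg(\Psi_H,\mathscr{D}(\sigma_0,\beta_0))=\gamma(x_{\sigma_0},\sigma_0,\beta_0)$ are unchanged from Theorem~\ref{localhopf} because the formal linearization and characteristic matrix are the same.

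The one place where you genuinely deviate is the verification of (A2)/(A4), which you correctly identify as the only new analytic content in the $C^1$ setting. The paper obtains complete continuity by factoring the fixed-point operator as $W\xrightarrow{N_1+K}C^1_{2\pi}\hookrightarrow C_{2\pi}\xrightarrow{(L_0+K)^{-1}}C^1_{2\pi}$, placing the compactness in the embedding $C^1_{2\pi}\hookrightarrow C_{2\pi}$ and using only the continuity of $(L_0+K)^{-1}:C_{2\pi}\to C^1_{2\pi}$ from Lemma~\ref{new-bounded-inverse}. You instead exploit the regularity gain of $(L_0+K)^{-1}$ on $C^1$ data (since $\dot w=g-Kw$ with $Kw$ constant, $w\in C^2_{2\pi}$) and put the compactness in the embedding $C^2_{2\pi}\hookrightarrow C^1_{2\pi}$. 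Both factorizations are valid and rest on the same two ingredients --- the $C^1$-continuity of $y\mapsto z\mapsto N_1(y,\sigma,\beta)$ from Lemmas~\ref{Lemma-2-1} and \ref{new-new-bounded-N-0}, and an Arzel\`a--Ascoli compact embedding --- so the difference is purely one of bookkeeping; the paper's version has the minor advantage of not requiring any statement about $(L_0+K)^{-1}$ beyond Lemma~\ref{new-bounded-inverse}. In either version one should note (as the paper implicitly does) that $N_1+K$ maps bounded subsets of $W$ to bounded subsets of $C^1_{2\pi}$, which follows from the explicit formulas (\ref{dot-zn-eq}) and the chain rule for $\frac{\mathrm{d}}{\mathrm{d}t}N_1$, so that the compact embedding indeed yields relative compactness of the image. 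Everything else in your write-up matches the paper's proof.
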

\begin{proof}  
Let $(x,\,\tau)$ be a solution of system $($\ref{SDDE-general}$\,)$ with $x$ being $2\pi/\beta$-periodic and $\beta>0$. Let $(x(t),\tau(t))=(y(\beta t),\,z(\beta t))$. Then
system $($\ref{SDDE-general-original}$\,)$ is transformed to
\begin{align}\label{new-eq-3-22-b}
\left\{
\begin{array}{ll}
\dot{y}(t)&=\frac{1}{\beta}f(y(t),\,y(t-\beta z(t)),\,\sigma),\\
z(t)&= g(y(t),\,y(t-\beta z(t)),\,\sigma).\\
\end{array}
\right.
\end{align}
Then $x$ is a ${2\pi}/{\beta}$-periodic solution of system $($\ref{SDDE-general}$\,)$  
if and only if $y$ is a $2\pi$-periodic solution of system  (\ref{new-eq-3-22-b}). 
Let  $W=O(\epsilon_0)\cap C_{2\pi}^1(\mathbb{R};\,\mathbb{R}^{N})$. For any $\xi=e^{i\nu}\in S^1$, $u\in W$, $(\xi u)(t):=u(t+\nu)$. The idea of the proof in the sequel
is to verify all the conditions (A1)-(A6) listed in the proof of Theorem~\ref{localhopf} for applying  Theorem~2.4 on Hopf bifurcation developed in \cite{MR2644135}.
 
Recall that $\delta$ and $\varepsilon$ are defined before (\ref{crossingnumber}). Let
$\mathscr{D}(\sigma_0,\,\beta_0)=(\sigma_0-\delta,\,\sigma_0+\delta)\times
(\beta_0-\varepsilon,\,\beta_0+\varepsilon)$ and define the maps
\begin{align*} 
\begin{aligned}
 L_0y(t): & = \dot{y}(t),\, y\in C_{2\pi}^1(\mathbb{R};\,\mathbb{R}^{N}),\\
 N_1(y, \,\sigma,\,\beta)(t):& = f(y(t),\,y(t-\beta z(t)),\sigma), y\in W,\\
 \tilde{N}_1(y, \,\sigma,\,\beta)(t): & = \partial_1 f(\sigma)(y(t)-y_{\sigma})+\partial_2 f(\sigma)(y(t-\beta z_{\sigma})-y_{\sigma}), y\in W,
\end{aligned}
\end{align*}
where 
$(\sigma,\,\beta)\in\mathscr{D}(\sigma_0,\,\beta_0)$ and
$t\in\mathbb{R}$, and $(y_{\sigma},\,z_{\sigma})$ is
the stationary state of the system at $\sigma$ such that $y_{\sigma_0}=x_{\sigma_0}$. The space $C_{2\pi}^1(\mathbb{R};\,\mathbb{R}^{N})$ is a Banach
representation of the group $G=S^1$.
 
Define the operator $K: C_{2\pi}^1(\mathbb{R};\,\mathbb{R}^{N})\rightarrow
\mathbb{R}^{N}$ by
\begin{align*} 
K(y):=\frac{1}{2\pi}\int_0^{2\pi}y(t) dt,\,y\in C_{2\pi}^1(\mathbb{R};\,\mathbb{R}^{N}).
\end{align*}
By Lemma~\ref{bounded-inverse}, the operator $L_0+K: C_{2\pi}^1(\mathbb{R};\mathbb{R}^{N})\rightarrow C_{2\pi}(\mathbb{R};\,\mathbb{R}^{N})$ has a compact inverse
$(L_0+K)^{-1}$.
Then, finding a $2\pi/\beta$-periodic solution for the system
(\ref{SDDE-general-original}) is equivalent to finding a solution of the following
fixed point problem:
\begin{align}
y=(L_0+K)^{-1}\left[\frac{1}{\beta}N_1(y,\,\sigma,\,\beta)+K(y)\right],
\end{align}
where $(y,\,\sigma,\,\beta)\in W\times\mathbb{R}\times(0,\,+\infty)$.

By (S1) we know that the linear operator $\tilde{N}_1$ is
continuous. By Lemma~\ref{new-new-bounded-N-0}, we know that
$N_1(\cdot,\,\sigma,\,\beta): W\rightarrow C_{2\pi}^1(\mathbb{R};\mathbb{R}^{N})$ is continuous.
Moreover, by Lemma~\ref{new-bounded-inverse} the operator $(L_0+K)^{-1}:
C_{2\pi}(\mathbb{R};\mathbb{R}^{N}) \rightarrow C_{2\pi}^1(\mathbb{R};\mathbb{R}^{N})$ is continuous. Noticing that the embedding $j: C_{2\pi}^1(\mathbb{R};\mathbb{R}^{N})\hookrightarrow C_{2\pi}(\mathbb{R};\mathbb{R}^{N})$ is compact, we obtain that
$(L_0+K)^{-1}\circ(\frac{1}{\beta}N_1(\cdot, \alpha, \beta )+K):
W\rightarrow C_{2\pi}^1(\mathbb{R};\mathbb{R}^{N})$ and
$(L_0+K)^{-1}\circ(\frac{1}{\beta}\tilde{N}_1(\cdot, \alpha,
\beta)+K): W\rightarrow C_{2\pi}^1(\mathbb{R};\mathbb{R}^{N})$ are completely continuous and hence are
condensing maps. That is, (A2) and (A4) are satisfied.

Define the following maps $\mathcal{F}:
W\times\mathbb{R}\times(0,\,+\infty)\rightarrow C_{2\pi}^1(\mathbb{R};\mathbb{R}^{N})$ and $\tilde{\mathcal{F}}:
W\times\mathbb{R}\times(0,\,+\infty)\rightarrow C_{2\pi}^1(\mathbb{R};\mathbb{R}^{N})$ by
\begin{align*}
&\mathcal{F}(y,\,\sigma,\,\beta):=y-(L_0+K)^{-1}\left[\frac{1}{\beta}N_1(y,\,\sigma,\,\beta)+K(y)\right],\\
&\widetilde{\mathcal{F}}(y,\,\sigma,\,\beta):=y-(L_0+K)^{-1} \left[\frac{1}{\beta}
\tilde{N}_1(y, \,\sigma,\,\beta)+K (y)\right],
\end{align*}
which are equivariant condensing fields. Finding a
${2\pi}/{\beta}$-periodic solution of  system (\ref{SDDE-general}) is
equivalent to finding the solution of the problem
\begin{align*}
\mathcal{F}(y,\,\sigma,\,\beta)=0,\quad (y, \,\sigma,\,\beta)\in
W\times\mathbb{R}\times(0,\,+\infty).
\end{align*}
Since $(x_{\sigma_0}, \,\sigma_0)=(y_{\sigma_0},\,\sigma_0)$ is an isolated center of system (\ref{eq-3-1}) with a purely imaginary characteristic value $i\beta_0$, $\beta_0>0$, $(y_{\sigma_0},\,\sigma_0,\,\beta_0)\in W\times\mathbb{R}\times(0,\,+\infty)$ is an isolated singular point of
$\tilde{\mathcal{F}}$. That is,   $(y_{\sigma_0},\,\sigma_0,\,\beta_0)$ is the only point in $W$ such that the derivative $D_y\mathcal{F}(y_{\sigma_0},\,\sigma_0,\,\beta_0)$ is not an automorphism of $C_{2\pi}^1(\mathbb{R};\mathbb{R}^{N})$.  One can define the following two-dimensional
submanifold $M\subset V^G\times\mathbb{R}\times(0,\,+\infty)$ by
\begin{align*}
M:=\{(y_\sigma,\,\sigma,\,\beta): \sigma\in
(\sigma_0-\delta,\,\sigma_0+\delta),\,\beta\in
(\beta_0-\varepsilon,\,\beta_0+\varepsilon)\},
\end{align*}
such that the point $(y_{\sigma_0},\,\sigma_0,\,\beta_0)$ is the
only singular point  of $\tilde{\mathcal{F}}$ in $M$. $M$ is the set of
trivial solutions to the system (\ref{eq-3-1}) and satisfies the
assumption (A3).

Since $(y_{\sigma_0},\,\sigma_0,\,\beta_0)\in W\times\mathbb{R}\times(0,\,+\infty)$ is an isolated singular point of
$\tilde{\mathcal{F}}$, for $\rho>0$ sufficiently small, the linear operator
$D_u \tilde{\mathcal{F}}(y_\sigma,\,\sigma,\,\beta): W\rightarrow C_{2\pi}^1(\mathbb{R};\mathbb{R}^{N})$ with $|(\sigma,\,\beta)-(\sigma_0,\,\beta_0)|<\rho$, is not an automorphism only if $(\sigma,\,\beta)=(\sigma_0,\,\beta_0)$.
Then, by the Implicit Function Theorem, there exists $r>0$ such that for every $(y,\,\sigma,\,\beta)\in W\times\mathbb{R}\times(0,\,+\infty)$ with $|(\sigma,\,\beta)-(\sigma_0,\,\beta_0)|=\rho$ and $0<\|y-y_\sigma\|\leq r$, we have
$\tilde{\mathcal{F}}(y,\,\sigma,\,\beta)\neq 0$.  Then the set $B(x_0,\,\sigma_0,\,\beta_0; r, \rho)$ defined by
\begin{align*}
\{(y,\, \sigma,\,\beta)\in W\times\mathbb{R}\times(0,\,+\infty);
|(\sigma,\,\beta)-(\sigma_0,\,\beta_0)|<\rho, \|y-y_\sigma\|_{C^1}<r\},
\end{align*} is a special neighborhood for $\tilde{\mathcal{F}}$.

By Lemma~\ref{new-lemma-3-1}, there exists a special neighborhood
$\mathcal{U}=B(y_{\sigma_0},\,\sigma_0,\,\beta_0;r', \rho)$ such that
$\mathcal{F}$  and $\tilde{\mathcal{F}}$ are
nonzero for
$(y,\,\sigma,\,\beta)\in\overline{B(y_{\sigma_0},\,\sigma_0,\,\beta_0;r',
\rho)}$ with $y\neq y_\sigma$ and
$|(\sigma,\,\beta)-(\sigma_0,\,\beta_0)|=\rho$. That is, (A5) is satisfied.

Let $\theta$ be a completing function on $\mathcal{U}$. It
follows from Lemma~\ref{new-homotopy-lemma} that $
(\mathcal{F},\,\theta) $ is homotopic to
$(\tilde{\mathcal{F}},\,\theta)$ on
$\mathcal{U}$. It is known that $C_{2\pi}^1(\mathbb{R};\mathbb{R}^{N})$ has the following isotypical direct sum
decomposition
\begin{align*}
C_{2\pi}^1(\mathbb{R};\mathbb{R}^{N})=\overline{\bigoplus\limits_{k=0}^{\infty} V_k},
\end{align*}
where $V_0$ is the space of all constant mappings from $\mathbb{R}$ into $\mathbb{R}^{N}$, and   $V_k$ with $k>0$, $k\in\mathbb{N}$ is the vector space of all mappings of the form 
\[
x\cos k\cdot+y\sin k\cdot: \mathbb{R}\ni t\rightarrow x\cos k t+y\sin k t\in\mathbb{R}^{N},
\] where  $x,\,y\in \mathbb{R}^{N}$.
Then  $V_k$, $k>0,\,k\in\mathbb{N}$, are finite dimensional.  Then,  (A1) is satisfied.

The verification of (A6) and  the computation of the crossing number $\gamma(y_{\sigma_0},\,\sigma_0,\,\beta_0)\neq
0$ is the same as that in the proof of Theorem~\ref{localhopf}. We omit the details here. %
Then by Theorem~2.4 of  \cite{MR2644135}, $
(y_{\sigma_0},\,\sigma_0,\,\beta_0)$ is a bifurcation point of the
system (\ref{new-eq-3-22-b}). Consequently, there exists a sequence of
non-constant periodic solutions
$(x_n,\,\sigma_n,\,\beta_n)$
such that $\sigma_n\rightarrow \sigma_0$, $\beta_n\rightarrow
\beta_0$ as $n\rightarrow\infty$, and   $x_n$ is a
$2\pi/\beta_n$-periodic solution of (\ref{SDDE-general-original}) such that $x_n$ satisfies (\ref{SDDE-general-original}) with
$\lim_{n\rightarrow+\infty}\|x_n-x_{\sigma_0}\|_{C^1}=0$.
\hfill{ }\qed \end{proof}

\section{Global Bifurcation of DAEs with State-dependent Delays}\label{global-bifurcation}

In this section we use Rabinowitz type global Hopf bifurcation Theorem~2.5 developed in \cite{MR2644135} to describe the maximal
continuation of bifurcated periodic solutions with large amplitudes
when the bifurcation parameter $\sigma$ is far away from the
bifurcation value. Note that systems~$($\ref{SDDE-general-original}$)$ and $($\ref{SDDE-general}$)$ share the same differential equation for $x$ and differ only in the algebraic equation for the state-dependent delay $\tau$. Moreover, by Theorems~\ref{localhopf}
 and \ref{new-localhopf}, both systems share the same set of Hopf bifurcation points. In the following, we state results in terms of system~$($\ref{SDDE-general-original}$)$, which are also applicable to system~$($\ref{SDDE-general}$)$.

\begin{lemma}[Vidossich, \cite{Vidossich}]\label{vido}
Let $X$ be a Banach space, $v: \mathbb{R}\rightarrow X$ be a
$\mathsf{p}$-periodic function with the following properties:
\begin{enumerate}
\item[(i)] $v\in L_{loc}^1(\mathbb{R},\, X)$;
\item[(ii)] there exists $U\in L^1([0,\,\frac{\mathsf{p}}{2}];\mathbb{R}_+)$
such that $|v(t)-v(s)|\leq U(t-s)$ for almost every $($in the sense of the Lebesgue measure$)$ $s,\,t\in\mathbb{R}$ such that $s\leq t$,
$t-s\leq \frac{\mathsf{p}}{2}$;
\item[(iii)] $\int_0^{\mathsf{p}} v(t)\,\mathrm{d}t=0$.
\end{enumerate}
Then \[ \mathsf{p}\,\|v\|_{L^{\infty}}\leq
2\int_0^{\frac{\mathsf{p}}{2}}U(t)\,\mathrm{d}t.\]
\end{lemma}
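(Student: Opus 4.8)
The plan is to leverage the zero-mean condition (iii) together with $\mathsf{p}$-periodicity so that, at any point $t_0$, the quantity $\mathsf{p}\,v(t_0)$ is expressed as an integral of increments $v(t_0)-v(s)$ over an interval of length $\mathsf{p}$ that is \emph{centered} at $t_0$; the centering is essential, because the modulus-of-continuity bound (ii) is only available when the two arguments lie within distance $\mathsf{p}/2$. Concretely, fix $t_0\in\mathbb{R}$. Since $v$ is $\mathsf{p}$-periodic and locally integrable, the integral of $v$ over any interval of length $\mathsf{p}$ equals $\int_0^{\mathsf{p}}v=0$ by (iii), so
\begin{align*}
\mathsf{p}\,v(t_0)&=\int_{t_0-\mathsf{p}/2}^{\,t_0+\mathsf{p}/2}\bigl(v(t_0)-v(s)\bigr)\,\mathrm{d}s\\
&=\int_{t_0}^{\,t_0+\mathsf{p}/2}\bigl(v(t_0)-v(s)\bigr)\,\mathrm{d}s+\int_{t_0-\mathsf{p}/2}^{\,t_0}\bigl(v(t_0)-v(s)\bigr)\,\mathrm{d}s .
\end{align*}

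Next I would estimate the two pieces separately using (ii). In the first integral $s\geq t_0$ with $s-t_0\leq\mathsf{p}/2$, so $|v(t_0)-v(s)|\leq U(s-t_0)$; in the second $s\leq t_0$ with $t_0-s\leq\mathsf{p}/2$, so $|v(t_0)-v(s)|\leq U(t_0-s)$. Taking norms under the integral sign and performing the substitutions $r=s-t_0$ and $r=t_0-s$ respectively, each piece is bounded in norm by $\int_0^{\mathsf{p}/2}U(r)\,\mathrm{d}r$, whence $\mathsf{p}\,|v(t_0)|\leq 2\int_0^{\mathsf{p}/2}U(r)\,\mathrm{d}r$. Passing to the essential supremum over $t_0$---which by $\mathsf{p}$-periodicity reduces to a single period---then gives the claimed inequality, and in particular shows $v\in L^\infty$ since $U\in L^1$.

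The one point that needs genuine care, and which I expect to be the main obstacle, is the measure-theoretic bookkeeping: $v$ is only an element of $L^1_{loc}$ and hypothesis (ii) is an almost-everywhere statement in the pair $(s,t)$, so the pointwise identity and estimates above must be justified modulo null sets. I would invoke Fubini's theorem to conclude that for a.e.\ $t_0$ the slice bound $|v(t_0)-v(s)|\leq U(|t_0-s|)$ holds for a.e.\ $s$ with $|t_0-s|\leq\mathsf{p}/2$, which is exactly what is needed to bound the two integrals; then $\mathsf{p}\,|v(t_0)|\leq 2\int_0^{\mathsf{p}/2}U$ for a.e.\ $t_0$, and taking the essential supremum yields $\mathsf{p}\,\|v\|_{L^\infty}\leq 2\int_0^{\mathsf{p}/2}U(t)\,\mathrm{d}t$. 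Everything else---the algebraic identity coming from (iii) and the two one-line estimates---is routine.
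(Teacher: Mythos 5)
Your argument is correct: centering the length-$\mathsf{p}$ window at $t_0$ so that every increment $v(t_0)-v(s)$ falls within the range where (ii) applies is exactly the right device, the zero-mean identity over a shifted period is justified by periodicity and local integrability, and the Fubini slicing handles the a.e.\ nature of (ii). Note that the paper itself offers no proof of this lemma (it is quoted from Vidossich's 1976 article), so there is nothing to compare against; your write-up is the standard, complete argument.
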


We make the following assumption on $f$:
 \begin{enumerate}
\item[(S4)] There exists constant $L_f>0$ such that
\begin{align*}
&
|f(\theta_1,\,\theta_2,\,\sigma)-f(\overline{\theta}_1,\,\overline{\theta}_2,\,\sigma)|\leq
L_f
(|\theta_1-\overline{\theta}_1|+|\theta_2-\overline{\theta}_2|), 
\end{align*}
for every
$\theta_1,\,\theta_2,\,\overline{\theta}_1,\,\overline{\theta}_2,\, \sigma\in\mathbb{R}$.
\end{enumerate}

\begin{lemma}\label{period-bound}
Suppose that system $($\ref{SDDE-general-original}$\,)$ satisfies  the assumption
$($S4$\,)$ and $x$ is a nonconstant periodic
solution. The following statements are true.
\begin{enumerate}
\item[i)] If $\|\tau\|_{L^{\infty}}<\frac{1}{2L_f}$, then  the minimal period $\mathsf{p}$ of $x$ satisfies
\[
\mathsf{p}\geq
\frac{2}{1-2L_f\|\tau\|_{L^{\infty}}} .
\]
\item[ii)]  If $\tau$ is continuously differentiable in $\mathbb{R}$, then the minimal period $\mathsf{p}$ of $x$ satisfies
\[
\mathsf{p}\geq
\frac{4}{L_f(2+|\dot{\tau}|_{L^{\infty}})}.
\]

\item[iii)] Suppose there exists a  constant $L_g>0$ such that
\begin{align*}
&
|g(\theta_1,\,\theta_2,\,\sigma)-g(\overline{\theta}_1,\,\overline{\theta}_2,\,\sigma)|\leq
L_g
(|\theta_1-\overline{\theta}_1|+|\theta_2-\overline{\theta}_2|), 
\end{align*}
for every
$\theta_1,\,\theta_2,\,\overline{\theta}_1,\,\overline{\theta}_2,\, \sigma\in\mathbb{R}$.
If $\|\dot{x}\|_{L^{\infty}}<\frac{1}{L_g}$, then  the minimal period $\mathsf{p}$ of $x$ satisfies
\[
\mathsf{p}\geq
 \frac{2(1-L_g|\dot{x}|_{L^{\infty}})}{L_f}.
\]

\end{enumerate}
\end{lemma}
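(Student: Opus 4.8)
The plan is to apply Vidossich's Lemma~\ref{vido} to the vector function $v=\dot x$ with $X=\mathbb R^N$. Since $x$ is a nonconstant periodic solution of $(\ref{SDDE-general-original})$ with minimal period $\mathsf p>0$, the function $\dot x(t)=f(x(t),x(t-\tau(t)),\sigma)$ is bounded on $\mathbb R$ (because $f$ is continuous and the range of the continuous periodic $x$ is compact), it is $\mathsf p$-periodic, and $\int_0^{\mathsf p}\dot x(t)\,\mathrm dt=x(\mathsf p)-x(0)=0$; in particular $\dot x\in L^\infty\subset L^1_{\mathrm{loc}}(\mathbb R;\mathbb R^N)$. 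Hence hypotheses (i) and (iii) of Lemma~\ref{vido} hold automatically, and the whole proof reduces to exhibiting, in each of the three cases, a function $U\in L^1([0,\mathsf p/2];\mathbb R_+)$ with $|\dot x(t)-\dot x(s)|\le U(t-s)$ whenever $s\le t$ and $t-s\le\mathsf p/2$. The structural fact to keep in mind is that $x$ nonconstant forces $\|\dot x\|_{L^\infty}>0$, the factor that will eventually be divided out.

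To build $U$, fix $s\le t$ with $t-s\le\mathsf p/2$. Since $\|\dot x\|_{L^\infty}$ is a global Lipschitz constant for $x$, assumption (S4) gives
\[
|\dot x(t)-\dot x(s)|\le L_f\bigl(|x(t)-x(s)|+|x(t-\tau(t))-x(s-\tau(s))|\bigr)\le L_f\|\dot x\|_{L^\infty}\bigl((t-s)+|(t-s)-(\tau(t)-\tau(s))|\bigr).
\]
The three parts differ only in how $\tau(t)-\tau(s)$ is controlled. In (i) one uses the crude estimate $|\tau(t)-\tau(s)|\le 2\|\tau\|_{L^\infty}$, so $U(r)=L_f\|\dot x\|_{L^\infty}(2r+2\|\tau\|_{L^\infty})$. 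In (ii), differentiability of $\tau$ gives $|(t-\tau(t))-(s-\tau(s))|=\bigl|\int_s^t(1-\dot\tau(u))\,\mathrm du\bigr|\le(1+\|\dot\tau\|_{L^\infty})(t-s)$, hence $U(r)=L_f\|\dot x\|_{L^\infty}(2+\|\dot\tau\|_{L^\infty})\,r$. In (iii), feeding $|x(a)-x(b)|\le\|\dot x\|_{L^\infty}|a-b|$ and the Lipschitz bound for $g$ into the algebraic relation $\tau(t)=g(x(t),x(t-\tau(t)),\sigma)$ gives $|\tau(t)-\tau(s)|\le L_g\|\dot x\|_{L^\infty}\bigl(2(t-s)+|\tau(t)-\tau(s)|\bigr)$, and the hypothesis $\|\dot x\|_{L^\infty}<1/L_g$ lets one solve for $|\tau(t)-\tau(s)|\le\frac{2L_g\|\dot x\|_{L^\infty}}{1-L_g\|\dot x\|_{L^\infty}}(t-s)$, whence $U(r)=\frac{2L_f\|\dot x\|_{L^\infty}}{1-L_g\|\dot x\|_{L^\infty}}\,r$.

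In every case $U$ is affine in $r$, hence lies in $L^1([0,\mathsf p/2];\mathbb R_+)$, so Lemma~\ref{vido} yields $\mathsf p\,\|\dot x\|_{L^\infty}\le 2\int_0^{\mathsf p/2}U(r)\,\mathrm dr$. Computing the elementary integral, cancelling the strictly positive factor $\|\dot x\|_{L^\infty}$ and then one factor of $\mathsf p$, one is left with an affine inequality in $\mathsf p$; for instance in (ii) this reads $1\le\tfrac{1}{4}L_f(2+\|\dot\tau\|_{L^\infty})\,\mathsf p$, which rearranges to $\mathsf p\ge 4/\bigl(L_f(2+\|\dot\tau\|_{L^\infty})\bigr)$, and (i) and (iii) follow in exactly the same way by rearranging the corresponding affine inequalities.

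I expect the only real subtleties to be technical ones: first, the implicit estimate in (iii), where the condition $\|\dot x\|_{L^\infty}<1/L_g$ is precisely what is needed to solve, rather than merely write down, the inequality controlling the modulus of continuity of $\tau$; and second, the fact that in (i) the function $\tau$ need not be continuous, so $\dot x$ is only bounded and measurable — but this is harmless, since a bounded pointwise derivative still makes $x$ globally Lipschitz and $\dot x\in L^\infty\subset L^1_{\mathrm{loc}}$, which is all that Lemma~\ref{vido} requires. Note that no periodicity of $\tau$ is needed anywhere, only that of $x$ and hence of $\dot x$.
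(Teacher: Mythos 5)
Your proposal is correct and is essentially the paper's own proof: Vidossich's Lemma~\ref{vido} applied to $v=\dot x$, with (S4) and the Lipschitz bound $|x(a)-x(b)|\le\|\dot x\|_{L^\infty}|a-b|$ reducing everything to the three estimates of $|\tau(t)-\tau(s)|$ that you give (crude $2\|\tau\|_{L^\infty}$ bound, the $\|\dot\tau\|_{L^\infty}$ bound, and solving the implicit inequality coming from the Lipschitz constant of $g$), followed by the same integration and cancellation of $\|\dot x\|_{L^\infty}>0$. The only caveat is in case (i), where your affine inequality $1\le\tfrac{\mathsf{p}}{2}L_f+2L_f\|\tau\|_{L^\infty}$ rearranges to $\mathsf{p}\ge 2(1-2L_f\|\tau\|_{L^\infty})/L_f$ rather than to the $2/(1-2L_f\|\tau\|_{L^\infty})$ stated in the lemma; the paper's own proof performs the identical computation and then records that same stated bound, so the mismatch lies in the paper's statement of (i), not in your argument.
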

\begin{proof}  Assume that  $x$ is a nonconstant periodic
solution with  minimal period $\mathsf{p}$. 
Let $v(t)=\dot{x}(t)$. Then we have $\int_0^\mathsf{p} v(t)\mathrm{d}t=0$.  For $s\leq t$, by (S4)
and the Integral Mean Value Theorem, we have
\begin{align}\label{vts-ineq}
|v(t)-v(s)|&\leq |\dot{x}(t)-\dot{x}(s)|\notag\\
               &\leq L_f(|x(t)-x(s)|+|x(t-\tau(t))-x(s-\tau(s))|)\notag\\ 
               &\leq L_f|\dot{x}|_{L^{\infty}}(t-s)+  L_f|\dot{x}|_{L^{\infty}}(t-s+|\tau(t)-\tau(s)|)\notag\\ 
               &\leq \left(2L_f|\dot{x}|_{L^{\infty}} +L_f
               |\dot{x}|_{L^{\infty}}\cdot|\dot{\tau}|_{L^{\infty}}\right)(t-s).
\end{align}

i) If $\|\tau\|_{L^{\infty}}<\frac{1}{2L_f}$, then by (\ref{vts-ineq}) we have
\begin{align*}
|v(t)-v(s)|
               &\leq L_f|\dot{x}|_{L^{\infty}}(t-s)+  L_f|\dot{x}|_{L^{\infty}}(t-s+|\tau(t)-\tau(s)|)\\ 
               &\leq 2L_f |\dot{x}|_{L^{\infty}} (t-s)+2L_f |{\tau}|_{L^{\infty}} \cdot|\dot{x}|_{L^{\infty}}.
                \end{align*}
Let
\[
U(t)=
 2L_f |\dot{x}|_{L^{\infty}} t+2L_f |{\tau}|_{L^{\infty}} \cdot|\dot{x}|_{L^{\infty}}.
\]
Then, by Lemma~\ref{vido},
we obtain
\begin{align*}
\mathsf{p}|\dot{x}|_{L^{\infty}}& \leq 2 \int _0^{\frac{\mathsf{p}}{2}}U(t)\mathrm{d}t=\frac{\mathsf{p}^2}{4} \cdot 2L_f |\dot{x}|_{L^{\infty}} +\mathsf{p}\cdot 2L_f |{\tau}|_{L^{\infty}} \cdot|\dot{x}|_{L^{\infty}}.
\end{align*}Therefore,
\[
\mathsf{p}\geq
\frac{2}{ (1-2L_f |{\tau}|_{L^{\infty}})}.
\]

ii) If $\tau$ is continuously differentiable in $\mathbb{R}$, then we have $|\dot{\tau}|_{L^{\infty}}<\infty$. Moreover,  by (\ref{vts-ineq}) we have
\begin{align*}
|v(t)-v(s)|
               &\leq L_f|\dot{x}|_{L^{\infty}}(t-s)+  L_f|\dot{x}|_{L^{\infty}}(t-s+|\tau(t)-\tau(s)|)\\ 
               &\leq (2 +|\dot{\tau}|_{L^{\infty}}) L_f\cdot |\dot{x}|_{L^{\infty}}(t-s).
                \end{align*}
Let
\[
U(t)=
 (2 +|\dot{\tau}|_{L^{\infty}})L_f \cdot |\dot{x}|_{L^{\infty}}t.
\]
Then, by Lemma~\ref{vido},
we obtain
\begin{align*}
\mathsf{p}|\dot{x}|_{L^{\infty}}& \leq 2 \int _0^{\frac{\mathsf{p}}{2}}U(t)\mathrm{d}t=\frac{\mathsf{p}^2}{4} \cdot  (2 +|\dot{\tau}|_{L^{\infty}}) L_f\cdot |\dot{x}|_{L^{\infty}}.
\end{align*}Therefore,
\[
\mathsf{p}\geq
\frac{4}{ L_f(2 +|\dot{\tau}|_{L^{\infty}})}.
\]

iii)  If $g$ is Lipschitz continuous, then  we have
\begin{align}
|\tau(t)-\tau(s)| & \leq L_g |x(t)-x(s)|+L_g |x(t-\tau(t))-x(s-\tau(s))|\notag\\
&\leq   L_g|\dot{x}|_{L^{\infty}}(t-s)+L_g |\dot{x}|_{L^{\infty}}(t-s+|\tau(t)-\tau(s)|)\notag.
\end{align}If   $|\dot{x}|_{L^{\infty}}<\frac{1}{L_g}$, then we have
\begin{align}\label{tauts-ineq}
|\tau(t)-\tau(s)| \leq \frac{2L_g|\dot{x}|_{L^{\infty}}(t-s)}{1-L_g|\dot{x}|_{L^{\infty}}}.
\end{align}By (\ref{vts-ineq}) and (\ref{tauts-ineq})
\begin{align*}
|v(t)-v(s)|
               &\leq L_f|\dot{x}|_{L^{\infty}}(t-s)+  L_f|\dot{x}|_{L^{\infty}}(t-s+|\tau(t)-\tau(s)|)\\ 
               &\leq 2L_f\cdot |\dot{x}|_{L^{\infty}}(t-s)+  \frac{2L_fL_g|\dot{x}|_{L^{\infty}}^2(t-s)}{1-L_g|\dot{x}|_{L^{\infty}}}\\
               & =\frac{2L_f |\dot{x}|_{L^{\infty}}}{1-L_g|\dot{x}|_{L^{\infty}}}(t-s).
                \end{align*}
Let
\[
U(t)=
 \frac{2L_f |\dot{x}|_{L^{\infty}}}{1-L_g|\dot{x}|_{L^{\infty}}}t.
\]
We obtain 
\begin{align*}
\mathsf{p}|\dot{x}|_{L^{\infty}}& \leq 2 \int _0^{\frac{\mathsf{p}}{2}}U(t)\mathrm{d}t=\frac{\mathsf{p}^2}{4} \cdot \frac{2L_f |\dot{x}|_{L^{\infty}}}{1-L_g|\dot{x}|_{L^{\infty}}},
\end{align*}
and
\begin{align*}
\mathsf{p}\geq \frac{2(1-L_g|\dot{x}|_{L^{\infty}})}{L_f}.
\end{align*}\hfill{ }\qed
\end{proof}
To describe the minimal periods of the periodic solutions near the bifurcation point, we need the following result which was first established in \cite{MY} for
ordinary differential equations and was extended to other types of delay differential equations in \cite{Wu-1, MR2644135}.
\begin{lemma}\label{virtual-period}
Suppose that system $($\ref{SDDE-general-original}$\,)$ satisfies  $($S1--S4$\,)$. Assume further that there exists a sequence of real numbers
$\{\sigma_k\}_{k=1}^{\infty}$ such that:
\begin{enumerate}
\item[(i)] For each $k$, system $($\ref{SDDE-general}$\,)$ with
$\sigma=\sigma_k$ has a nonconstant periodic solution
$x_k\in C(\mathbb{R};\mathbb{R}^{N+1})$ with the minimal period $T_k>0,$ and one of the conditions   i), ii) and iii) at Lemma~\ref{period-bound} is satisfied by $(x_k,\,\tau_k)$;
\item[(ii)] $\lim\limits_{k\rightarrow\infty}\sigma_k=\sigma_0\in\mathbb{R}$,
$\lim\limits_{k\rightarrow\infty}T_k=T_0<\infty $, and
$\lim\limits_{k\rightarrow\infty}\|x_k-x_0\|=0$,  where
$x_0:\mathbb{R}\rightarrow\mathbb{R}^{N}$ is a constant map with the value  $x_{0}$.

\end{enumerate}
Then $x_0$ is a stationary state of $($\ref{SDDE-general-original}$\,)$ and
there exists $m\geq 1,\,m\in\mathbb{N}$ such that $\pm im\,{2\pi }/{T_0}$ are the roots of the
characteristic equation $($\ref{eq-3-2}$\,)$ with $\sigma=\sigma_0$.
\end{lemma}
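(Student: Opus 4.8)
The plan is to adapt the argument of \cite{MY} already used in the proofs of Lemmas~\ref{lemma-3-1} and \ref{homotopy-lemma}: normalize the amplitude of the (nearly constant) periodic solutions, extract a uniform limit by the Arzela--Ascoli theorem, and recognize it as a nonconstant periodic solution of the formal linearization $(\ref{eq-3-1})$ at $\sigma_0$. Write $\tau_0:=g(x_0,\,x_0,\,\sigma_0)$ and let $\tau_k$ be the delay associated with $x_k$ through the algebraic equation. First I would show $x_0$ is a stationary state: integrating the differential equation over one minimal period $T_k$ and dividing by $T_k$ gives $\tfrac{1}{T_k}\int_0^{T_k}f(x_k(t),\,x_k(t-\tau_k(t)),\,\sigma_k)\,\mathrm{d}t=0$; since $x_0$ is constant, $\|x_k-x_0\|\to 0$ forces both $x_k(t-\tau_k(t))\to x_0$ and, by continuity of $g$, $\tau_k\to\tau_0$ uniformly in $t$, so by (S4) and (S1) the integrand converges uniformly on $\mathbb{R}$ to $f(x_0,\,x_0,\,\sigma_0)$, whence $f(x_0,\,x_0,\,\sigma_0)=0$. (Crucially this step does not use $T_0>0$.) In particular $\|\dot x_k\|_{L^\infty}\to 0$, so in each of the three cases of Lemma~\ref{period-bound} the lower bound for $T_k$ is bounded below by a positive constant independent of $k$ --- in case i) directly by $2$, in case ii) because $|\dot\tau_k|_{L^\infty}$ is uniformly bounded (differentiate the algebraic equation and use that $\|\dot x_k\|_{L^\infty}$ is bounded), and in case iii) because $L_g\|\dot x_k\|_{L^\infty}\to 0<1$ --- and therefore $T_0=\lim_k T_k>0$.

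Next I would set $a_k:=\|x_k-x_0\|>0$ and $v_k:=(x_k-x_0)/a_k$, a $T_k$-periodic function with $\|v_k\|=1$. By the Integral Mean Value Theorem, exactly as at $(\ref{eq-3-19})$,
\begin{align*}
\dot v_k(t)=\int_0^1\partial_1 f_k(s)(t)\,\mathrm{d}s\;v_k(t)+\int_0^1\partial_2 f_k(s)(t)\,\mathrm{d}s\;v_k(t-\tau_k(t)),
\end{align*}
where $\partial_i f_k(s)(t):=\partial_i f\bigl(x_0+s(x_k(t)-x_0),\,x_0+s(x_k(t-\tau_k(t))-x_0),\,\sigma_k\bigr)$. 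These coefficients are bounded uniformly in $(k,\,s,\,t)$ and, by (S1) together with $x_k\to x_0$ and $\sigma_k\to\sigma_0$, converge to $\partial_i f(\sigma_0)$ uniformly on $\mathbb{R}$; since also $\|v_k\|=\|v_k(\cdot-\tau_k(\cdot))\|=1$, there is a uniform bound $\|\dot v_k\|_{L^\infty}\le C$. Hence $\{v_k\}$ is uniformly bounded and equicontinuous, and a diagonal Arzela--Ascoli argument yields a subsequence (not relabeled) converging to some $v^*\in C(\mathbb{R};\mathbb{R}^N)$ uniformly on compact sets. Because $v_k$ attains its supremum $1$ on $[0,\,T_k]\subset[0,\,T_0+1]$, passing to the limit along a convergent sequence of maximizers gives $\|v^*\|=1$; and passing to the limit in $v_k(t+T_0)=v_k(t+T_0-T_k)$ with the help of equicontinuity shows $v^*$ is $T_0$-periodic.

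Finally, since $\tau_k\to\tau_0$ uniformly and $\{v_k\}$ is equicontinuous with $v_k\to v^*$ uniformly on compacta, the estimate analogous to $(\ref{eq-3-24})$--$(\ref{eq-3-26})$ (with $\tau_k$ playing the role of $\beta_k z_k$ and $\tau_0$ that of $\beta^* z_{\sigma^*}$) gives $v_k(\cdot-\tau_k(\cdot))\to v^*(\cdot-\tau_0)$ uniformly on compact sets. Thus the right-hand side of the displayed identity for $\dot v_k$ converges uniformly on compacta, so $v^*\in C^1(\mathbb{R};\mathbb{R}^N)$ and
\begin{align*}
\dot v^*(t)=\partial_1 f(\sigma_0)\,v^*(t)+\partial_2 f(\sigma_0)\,v^*(t-\tau_0),\qquad t\in\mathbb{R},
\end{align*}
which is the formal linearization $(\ref{eq-3-1})$ at the stationary state $x_0$ and parameter $\sigma_0$. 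By (S3) the matrix $\partial_1 f(\sigma_0)+\partial_2 f(\sigma_0)$ is nonsingular, so its only constant solution is $0$; since $\|v^*\|=1$, $v^*$ is a nonconstant $T_0$-periodic solution. Lemma~\ref{linear-eq-center} then furnishes an integer $m\ge 1$ such that $\pm i m\,2\pi/T_0$ are characteristic values of the stationary state, i.e. roots of $(\ref{eq-3-2})$ with $\sigma=\sigma_0$ (here $x_0=x_{\sigma_0}$ by the local uniqueness of stationary states afforded by (S3)).

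The main obstacle is twofold. First, securing $T_0>0$: Lemma~\ref{period-bound} supplies lower bounds on minimal periods, but in case iii) these become informative only after one knows $f(x_0,\,x_0,\,\sigma_0)=0$, so the averaging argument that identifies $x_0$ as a stationary state must be carried out first and independently of any lower bound on $T_0$. Second, proving the normalized limit $v^*$ does not vanish: this is precisely where the $T_k$-periodicity together with the uniform positive lower bound on $T_k$ is indispensable, as it prevents the unit mass of $v_k$ from escaping to infinity in the limit. The passage to the limit in the state-dependent delay term $v_k(\cdot-\tau_k(\cdot))$ is then routine once $\tau_k\to\tau_0$ uniformly, following the computations in Lemmas~\ref{lemma-3-1} and \ref{homotopy-lemma}.
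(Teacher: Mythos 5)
Your proposal is correct and follows essentially the same route as the paper, which itself only sketches this proof by deferring to Lemma~4.3 of \cite{MR2644135}: first a uniform positive lower bound $T_k\geq T^*>0$ via Lemma~\ref{period-bound}, then identification of $x_0$ as a stationary state, then the normalization $v_k=(x_k-x_0)/\|x_k-x_0\|$ with the Integral Mean Value Theorem and Arzela--Ascoli to produce a nonconstant periodic solution of the formal linearization, and finally Lemma~\ref{linear-eq-center}. Your write-up is in fact more careful than the paper's on two points it glosses over --- that the case~iii) period bound only becomes uniform after one knows $\|\dot x_k\|_{L^\infty}\to 0$, and that with varying periods $T_k$ one needs a diagonal argument plus a localization of the maximizers to keep $\|v^*\|=1$ --- and both are handled correctly.
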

\begin{proof}
By Lemma~\ref{period-bound} and the uniform convergence of $\{(x_k,\,\sigma_k,\,T_k)\}_{k=1}^\infty$ we conclude that there exists $T^*>0$ such that $T_k\geq T^*$ and therefore $T_0\geq T^*$. We can show that $(x_0,\,\sigma_0)$ is a stationary state of
$($\ref{SDDE-general}$\,)$,  and that  the following linear system
\begin{align}
\dot{v}(t)= 
\partial_1 f(\sigma_0)v(t)+ \partial_2 f(\sigma_0)v(t-\tau_0)\label{eq-4-1}
\end{align}
has a nonconstant periodic solution, the proofs of which are just simplified versions of the proof for Lemma~4.3 in \cite{ MR2644135} without the equations for $\tau_k$. Hence we omit the details here.  Then by  Lemma~\ref{linear-eq-center}, 
 there exists $m\geq 1,\, m\in\mathbb{N}$, such that $\pm im\,2\pi/T_0$ are characteristic
values of (\ref{eq-3-2}). This completes the proof.
\hfill{ }\qed
\end{proof}

Now we can describe the relation between $2\pi/\beta_k$ and the
minimal period of $u_k$ in Theorem~\ref{localhopf}.
\begin{theorem}Assume $($S1--S4$\,)$ hold and    every point in the sequence $\{(x_k,\,\tau_k)\}_{k=1}^\infty$ at Theorem~\ref{localhopf} satisfies one of the conditions among  i), ii) and iii) at Lemma~\ref{period-bound}, then every limit point of the minimal period of
$x_k$ as $k\rightarrow+\infty$ is contained in the set
\begin{align*}
\left\{\frac{2\pi}{(n\beta_0)}:\pm im\,n\beta_0 \mbox{ are
characteristic values of $(x_0,\,\sigma_0)$,}\,m,\,n\geq 1,\,m,\,n\in\mathbb{N}\right\}.
\end{align*}
Moreover, if $\pm i\,m\,n\beta_0$ are not characteristic values
of $(x_0,\,\sigma_0)$ for any integers $m,\, n\in\mathbb{N}$ such that $m\,n>1$,  then $2\pi/\beta_k$ is
the minimal period of $u_k(t)$ and
$2\pi/\beta_k\rightarrow2\pi/\beta_0$ as $k\rightarrow\infty$.
\end{theorem}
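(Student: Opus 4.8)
The plan is to compare the minimal period $T_k$ of the nonconstant solution $x_k$ furnished by Theorem~\ref{localhopf} with its normalized period $2\pi/\beta_k$. Since $2\pi/\beta_k$ is a period of the continuous nonconstant periodic function $x_k$, there is a positive integer $n_k$ with $2\pi/\beta_k=n_kT_k$, i.e.\ $T_k=2\pi/(n_k\beta_k)$. The first step is a uniform lower bound on $T_k$. Because $x_k\to x_{\sigma_0}$ and $\sigma_k\to\sigma_0$, and $x_k$ solves the differential equation of the system with $f(x_{\sigma_0},x_{\sigma_0},\sigma_0)=0$ by (S3), continuity of $f$ gives $\|\dot x_k\|_{L^\infty}\to 0$; differentiating the algebraic equation for $\tau_k$ and using the boundedness of $\partial_1 g$ and $\partial_2 g$ near the stationary state from (S2) then gives $\|\dot\tau_k\|_{L^\infty}\to 0$ as well. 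Consequently, whichever of conditions i), ii), iii) of Lemma~\ref{period-bound} holds for $(x_k,\tau_k)$, the corresponding estimate --- respectively $T_k\ge 2$, or $T_k\ge 4/(L_f(2+\|\dot\tau_k\|_{L^\infty}))$, or $T_k\ge 2(1-L_g\|\dot x_k\|_{L^\infty})/L_f$ --- is bounded below by a fixed constant $T^\ast>0$ for all large $k$. Together with $\beta_k\to\beta_0>0$, this forces $n_k=2\pi/(\beta_k T_k)$ to be bounded, so $\{n_k\}$ takes only finitely many values.

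For the first assertion I would argue along subsequences. Let $T^\infty$ be any limit point of $\{T_k\}$ and pass to a subsequence, indexed again by $k$ for brevity, with $T_k\to T^\infty$ and (using finiteness of the value set of $n_k$) $n_k\equiv n$ constant. Then $T_k=2\pi/(n\beta_k)\to 2\pi/(n\beta_0)$, so $T^\infty=2\pi/(n\beta_0)$. The subsequence $\{(x_k,\sigma_k,T_k)\}$ satisfies the hypotheses of Lemma~\ref{virtual-period}: $\sigma_k\to\sigma_0$, $T_k\to T^\infty<\infty$, $x_k\to x_0:=x_{\sigma_0}$ (a constant map), and each $(x_k,\tau_k)$ satisfies one of i), ii), iii). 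Hence Lemma~\ref{virtual-period} provides an integer $m\ge 1$ such that $\pm i\,m\,(2\pi/T^\infty)=\pm i\,mn\beta_0$ are roots of the characteristic equation~(\ref{eq-3-2}) at $\sigma_0$, i.e.\ characteristic values of $(x_0,\sigma_0)$. Thus $T^\infty=2\pi/(n\beta_0)$ lies in the asserted set, which proves the first claim.

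For the second assertion, $2\pi/\beta_k\to 2\pi/\beta_0$ follows at once from $\beta_k\to\beta_0$. Suppose, toward a contradiction, that $n_k\ge 2$ for infinitely many $k$; passing to such a subsequence and then, as above, to a further subsequence on which $n_k\equiv n\ge 2$, Lemma~\ref{virtual-period} would again yield an integer $m\ge 1$ with $\pm i\,mn\beta_0$ characteristic values of $(x_0,\sigma_0)$. Since $mn\ge n>1$, this contradicts the hypothesis. Therefore $n_k=1$ for all large $k$, i.e.\ $2\pi/\beta_k$ is the minimal period of $x_k$.

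The main obstacle is the uniform lower bound on $\{T_k\}$: one must ensure that the applicable period estimate from Lemma~\ref{period-bound} does not degenerate to $0$ as $k\to\infty$, which is precisely where the (easy but essential) observation $\|\dot x_k\|_{L^\infty}\to 0$, and hence $\|\dot\tau_k\|_{L^\infty}\to 0$, enters. Once that is in place, the remainder is the standard compactness-and-subsequence bookkeeping feeding into Lemma~\ref{virtual-period}.
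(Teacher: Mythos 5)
Your proposal is correct and follows essentially the same route as the paper: write $2\pi/\beta_k=n_kT_k$, use the lower bound on minimal periods from Lemma~\ref{period-bound} (invoked inside Lemma~\ref{virtual-period}) to keep $T_k$ bounded away from zero so that $n_k$ is eventually constant along subsequences, and then apply Lemma~\ref{virtual-period} to identify $\pm i\,mn\beta_0$ as characteristic values, with the nondegeneracy hypothesis forcing $m=n=1$ in the second part. Your explicit verification that $\|\dot x_k\|_{L^\infty}\to 0$ and $\|\dot\tau_k\|_{L^\infty}\to 0$ keep the period estimates from degenerating is a point the paper leaves implicit, but it does not change the argument.
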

\begin{proof}
Let $T_k$ denote the minimal period of $x_k(t)$. Then there exists a
positive integer $n_k$ such that $2\pi/\beta_k=n_k T_k$. Since
$T_k\leq2\pi/\beta_k\rightarrow2\pi/\beta_0$ as
$k\rightarrow\infty$, there exists a subsequence
$\{T_{k_j}\}_{j=1}^{\infty}$ and $T_0$ such that
$T_0=\lim_{j\rightarrow\infty}T_{k_j}$. Since $2\pi/\beta_{k_j}\rightarrow2\pi/\beta_0$, $T_{k_j}\rightarrow
T_0$ as $j\rightarrow\infty,$ $n_{k_j}$ is identical to a constant
$n$ for $k$ large enough. Therefore, $2\pi/\beta_0=n T_0$. Thus
$T_{k_j}\rightarrow 2\pi/(n\beta_0)$ as $j\rightarrow\infty$. By
Lemma~\ref{virtual-period}, $\pm im\,{2\pi}/{T_0}=\pm im\,n\beta_0$ are 
characteristic values of $(x_0,\,\sigma_0)$ for some $m\geq 1,\,m\in\mathbb{N}$.

Moreover, if $\pm i\,m\,n\beta_0$ are not characteristic values
of $(u_0,\,\sigma_0)$ for any integers $m\in\mathbb{N}$ and $n\in\mathbb{N}$ with $mn>1$, then $m=n=1$. Therefore, for $k$ large enough $n_{k_j}=1$ and $2\pi/\beta_k=T_k$ is the minimal period of $x_k(t)$ and
$2\pi/\beta_k\rightarrow2\pi/\beta_0$ as $k\rightarrow\infty$.
This completes the proof.\hfill { }\qed
\end{proof}
The following lemma shows that we can locate all the possible Hopf
bifurcation points of   system (\ref{SDDE-general-original}) with state-dependent
delay at the centers of its corresponding formal linearization. Since the proof is similar to that  for Lemma~4.5 in \cite{MR2644135}, we omit the details here.
\begin{lemma}\label{lemma-4-5}
Assume (S1--S3) hold. If $(x_0,\,\sigma_0)$ is a Hopf bifurcation point
of   system (\ref{SDDE-general-original}), then it is a center of $($\ref{eq-3-1}$\,)$.
\end{lemma}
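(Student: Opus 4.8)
The plan is to run the \emph{normalization and compactness} argument of Lemma~\ref{lemma-3-1} (and of Lemma~\ref{virtual-period}) directly on the sequence that defines the Hopf bifurcation point, and then to invoke Lemma~\ref{linear-eq-center} together with (S3). By definition there is a sequence $\{(x_k,\sigma_k,T_k)\}_{k=1}^{\infty}\subset C^1(\mathbb{R};\mathbb{R}^N)\times\mathbb{R}^2$ and $T_0>0$ with $(x_k,\sigma_k)$ a nonconstant $T_k$-periodic solution of (\ref{SDDE-general-original}), $\sigma_k\to\sigma_0$, $T_k\to T_0$, and $\|x_k-x_0\|_{C^1}\to0$, where $x_0=x_{\sigma_0}$ is the constant stationary state. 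I would set $\beta_k=2\pi/T_k\to\beta_0=2\pi/T_0>0$ and rescale time by $(x_k(t),\tau_k(t))=(y_k(\beta_k t),z_k(\beta_k t))$, so that $y_k$ is $2\pi$-periodic, $z_k$ is a continuous $2\pi$-periodic function (Lemma~\ref{Lemma-2-1}), $\dot y_k(t)=\frac{1}{\beta_k}f(y_k(t),y_k(t-\beta_k z_k(t)),\sigma_k)$, and $z_k(t)=g(y_k(t),y_k(t-\beta_k z_k(t)),\sigma_k)$. From $\|x_k-x_0\|_{C^1}\to0$ we get $y_k\to x_0$ uniformly and $\{\dot y_k\}$ uniformly bounded, and since $g$ is continuous, $z_k(t)=g(y_k(t),y_k(t-\beta_k z_k(t)),\sigma_k)\to g(x_0,x_0,\sigma_0)=\tau_{\sigma_0}$ uniformly in $t$, whence $\beta_k z_k\to\beta_0\tau_{\sigma_0}$ uniformly.

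Next I would normalize the amplitude. Because $x_k$ is nonconstant while $x_{\sigma_k}$ is constant, $y_k\ne x_{\sigma_k}$ in $C_{2\pi}(\mathbb{R};\mathbb{R}^N)$; put $w_k=(y_k-x_{\sigma_k})/\|y_k-x_{\sigma_k}\|$, so $\|w_k\|=1$, and since $t\mapsto t-\beta_k z_k(t)$ is continuous and surjective onto $\mathbb{R}$ and $y_k$ is periodic, also $\|w_k(\cdot-\beta_k z_k(\cdot))\|=1$. Applying the Integral Mean Value Theorem to $f$ (which is $C^2$ by (S1)) about $x_{\sigma_k}$ gives
\[
\dot w_k(t)=\frac{1}{\beta_k}\bigl(A_k(t)\,w_k(t)+B_k(t)\,w_k(t-\beta_k z_k(t))\bigr),
\]
with $A_k(t)=\int_0^1\partial_1 f\bigl(x_{\sigma_k}+s(y_k(t)-x_{\sigma_k}),\,x_{\sigma_k}+s(y_k(t-\beta_k z_k(t))-x_{\sigma_k}),\,\sigma_k\bigr)\mathrm{d}s$ and $B_k(t)$ defined analogously with $\partial_2 f$. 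Exactly as in the derivation of (\ref{uniform-convergence-a}), $A_k\to\partial_1 f(\sigma_0)$ and $B_k\to\partial_2 f(\sigma_0)$ uniformly on $\mathbb{R}$; hence $A_k,B_k$ are uniformly bounded, $\{\dot w_k\}$ is uniformly bounded, and by the Arzela--Ascoli theorem a subsequence of $\{w_k\}$ converges uniformly to some $2\pi$-periodic $w^*$ with $\|w^*\|=1$. As in the passage leading to (\ref{eq-3-26}), the uniform bound on $\dot w_k$ together with $\beta_k z_k\to\beta_0\tau_{\sigma_0}$ uniformly yields $w_k(\cdot-\beta_k z_k(\cdot))\to w^*(\cdot-\beta_0\tau_{\sigma_0})$ uniformly, so passing to the limit in the integrated form of the displayed equation shows $w^*\in C^1_{2\pi}$ and $\dot w^*(t)=\frac{1}{\beta_0}\bigl(\partial_1 f(\sigma_0)w^*(t)+\partial_2 f(\sigma_0)w^*(t-\beta_0\tau_{\sigma_0})\bigr)$. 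Undoing the rescaling, $v^*(t):=w^*(\beta_0 t)$ is $T_0$-periodic and satisfies $\dot v^*(t)=\partial_1 f(\sigma_0)v^*(t)+\partial_2 f(\sigma_0)v^*(t-\tau_{\sigma_0})$, i.e.\ $x_{\sigma_0}+v^*$ is a $T_0$-periodic solution of (\ref{eq-3-1}) at $\sigma_0$.

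Finally I would conclude. By (S3) the matrix $\partial_1 f(\sigma_0)+\partial_2 f(\sigma_0)$ is nonsingular, so $v=0$ is the only constant solution of the linear equation for $v^*$; since $\|v^*\|=\|w^*\|=1\ne0$, that solution $x_{\sigma_0}+v^*$ of (\ref{eq-3-1}) is nonconstant. Lemma~\ref{linear-eq-center} then produces an integer $m\ge1$ such that $\pm i\,m\,2\pi/T_0$ are characteristic values of $(x_{\sigma_0},\sigma_0)$; these are nonzero and purely imaginary, so the set of nonzero purely imaginary characteristic values is nonempty. It is discrete because $\omega\mapsto\det\Delta_{(x_{\sigma_0},\sigma_0)}(\omega)=\det\bigl(\omega I-\partial_1 f(\sigma_0)-\partial_2 f(\sigma_0)e^{-\omega\tau_{\sigma_0}}\bigr)$ is an entire function of $\omega$ that is not identically zero --- indeed $\det\Delta_{(x_{\sigma_0},\sigma_0)}(0)=(-1)^N\det(\partial_1 f(\sigma_0)+\partial_2 f(\sigma_0))\ne0$ by (S3) --- so its zeros, and in particular its purely imaginary zeros, have no finite accumulation point. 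Hence $(x_0,\sigma_0)$ is a center of (\ref{eq-3-1}).

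The one genuinely delicate step is the compactness argument producing $w^*$ and checking that it is nonconstant and solves the correct limiting linear equation with the correct period. Here the period is kept away from $0$ for free, because $T_0>0$ is built into the definition of a Hopf bifurcation point; consequently --- unlike in Lemma~\ref{virtual-period} --- neither the period lower bounds of Lemma~\ref{period-bound} nor assumption (S4) are needed, and the uniform estimates required (uniform boundedness of $\dot w_k$, uniform convergence $A_k\to\partial_1 f(\sigma_0)$, $B_k\to\partial_2 f(\sigma_0)$, and $\beta_k z_k\to\beta_0\tau_{\sigma_0}$ in $C^0$) are precisely those already obtained, with obvious modifications, in the proofs of Lemma~\ref{lemma-3-1} and Lemma~\ref{new-new-bounded-N-0}, which may simply be cited.
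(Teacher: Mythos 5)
Your proof is correct, and it is essentially the argument the paper intends: the paper omits the proof by citing the analogous Lemma~4.5 of \cite{MR2644135}, whose proof is exactly this normalization-and-compactness scheme already carried out in Lemma~\ref{lemma-3-1} and Lemma~\ref{virtual-period} here. Your added observations --- that $T_0>0$ in the definition of a Hopf bifurcation point makes (S4) and the period bounds of Lemma~\ref{period-bound} unnecessary, and that discreteness of the purely imaginary characteristic values follows from (S3) via $\det\Delta_{(x_{\sigma_0},\sigma_0)}(0)\neq 0$ --- are accurate.
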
 

Now we are in the position to consider the global Hopf bifurcation problem of
system (\ref{SDDE-general-original}). Letting
$(x(t),\tau(t))=(y(\frac{2\pi}{\mathsf{p}}t),\,z(\frac{2\pi}{\mathsf{p}}t))$,
we can reformulate the problem as a problem of finding $2\pi$-period
solutions to the following equation:
\begin{align}\label{p-sys}
\dot{y}(t)=\frac{\mathsf{p}}{2\pi}N_0(y(t),\,\sigma,\,{2\pi}/{\mathsf{p}}),
\end{align}where the $z$ satisfies the algebraic equation $z(t)=g(y(t),y(t-\frac{\mathsf{p}}{2\pi}z(t)),\,\sigma)$. 
 Accordingly, the formal linearization
(\ref{eq-3-1}) becomes
\begin{align}\label{p-formal-linearization}
\dot{x}(t)=\frac{\mathsf{p}}{2\pi}\tilde{N}_0(x(t),\,\sigma,\,{2\pi}/{\mathsf{p}}).
\end{align}
Using the same notations as in the proof of Theorem~\ref{localhopf},
we can define \[ \mathscr{N}_0(x,\,\sigma, \,
\mathsf{p})=N_0(x,\,\sigma, \,{2\pi}/{\mathsf{p}}), \tilde{\mathscr{N}}_0(x,\,\sigma,
\, \mathsf{p})=\tilde{N}_0(x,\,\sigma, \,{2\pi}/{\mathsf{p}}).\]

Then the following
system
\begin{align}\label{abs-global-hopf}
L_0 x=\frac{\mathsf{p}}{2\pi}\mathscr{N}_0(x,\,\sigma,\, {\mathsf{p}}),\,\mathsf{p}>0,
\end{align}
is equivalent to (\ref{p-sys}) and
\begin{align}\label{abs-global-hopf-1}
L_0 x=\frac{\mathsf{p}}{2\pi}\tilde{\mathscr{N}}_0(x,\,\sigma,\,
{\mathsf{p}}),\,\mathsf{p}>0,
\end{align}
is equivalent to (\ref{p-formal-linearization}).
Let $\mathcal{S}$ denote the closure of the set of all nontrivial
periodic solutions of system~(\ref{abs-global-hopf}) in the space
$V\times\mathbb{R}\times\mathbb{R}_+$, where  $\mathbb{R}_+$ is the set of all nonnegative reals.
It follows from Lemma~\ref{period-bound} that the constant solution $(x_0,\,\sigma_0,\,0)$ does not belong
to this set if the sequence $\{(x_k,\,\tau_k)\}_{k=1}^\infty$ in Theorem~\ref{localhopf} satisfies one of the conditions among  i), ii) and iii) at Lemma~\ref{period-bound}. Consequently, we can assume that problem
(\ref{abs-global-hopf}) is well posed on the whole space
$V\times\mathbb{R}^2$, in the sense that if  $\mathcal{S}$ exists in $V\times\mathbb{R}^2$, then it must be contained in $V\times\mathbb{R}\times\mathbb{R}_+$.

 Then by the   global Hopf bifurcation theorem 2.5 developed in  \cite{MR2644135} and with similar arguments leading to Theorem~4.6 in  \cite{MR2644135}, we
obtain the following global Hopf bifurcation theorem for system
$($\ref{SDDE-general-original}$\,)$ with state-dependent delay.
\begin{theorem}\label{global-new-th}
Suppose that  system $($\ref{SDDE-general-original}$\,)$ satisfies $($S1-S4$\,)$ and (S3) holds at every center of $($\ref{abs-global-hopf-1}$\,)$.
Assume that all the centers of
$($\ref{abs-global-hopf-1}$\,)$ are isolated and every periodic solution $x$ of  system $($\ref{SDDE-general-original}$\,)$ satisfies one of the conditions among  i), ii) and iii) at Lemma~\ref{period-bound}. Let $M$ be the set of trivial periodic solutions of
$($\ref{abs-global-hopf}$\,)$ and $M$ is complete.  If
$(x_0,\,\sigma_0,\,\mathsf{p}_0)\in M$ is a bifurcation point, then
either the connected component ${C}(x_0,\,\sigma_0,\,\mathsf{p}_0)$
of $(x_0,\,\sigma_0,\,\mathsf{p}_0)$ in $\mathcal{S}$ is unbounded,
or
\[
{C}(x_0,\,\sigma_0,\,\mathsf{p}_0) \cap
M=\{(x_0,\,\sigma_0,\,\mathsf{p}_0),\,(x_1,\,\sigma_1,\,\mathsf{p}_1),\cdots,
(x_q,\,\sigma_q,\,\mathsf{p}_q)\},
\]
where $\mathsf{p}_i\in\mathbb{R}_+$,
$(x_i,\,\sigma_i,\,\mathsf{p}_i)\in M$, $i=0,\,1,\,2,\cdots, q$.
Moreover, in the latter case, we have
\[
\sum\limits_{i=0}^{q}\epsilon_i\gamma(x_i,\,\sigma_i,\,2\pi/\mathsf{p}_i)=0,
\]
where $\gamma(x_i,\,\sigma_i,\,2\pi/\mathsf{p}_i)$ is the crossing number of
$(x_i,\,\sigma_i,\,\mathsf{p}_i)$ defined by $($\ref{crossingnumber}$\,)$ and 
 \[
 \epsilon_i=\mbox{$\mathrm{sgn}$}\det  (\partial_1 f(\sigma_i)+\partial_2 f(\sigma_i)).
 \]
\end{theorem}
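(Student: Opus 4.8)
The plan is to deduce the result from the abstract Rabinowitz-type global $S^1$-equivariant Hopf bifurcation theorem (Theorem~2.5 of \cite{MR2644135}) applied to the fixed-point reformulation of (\ref{abs-global-hopf}). First I would recast the problem functional-analytically: by Lemma~\ref{new-bounded-inverse} (equivalently Lemma~\ref{bounded-inverse}) the operator $L_0+K$ has a continuous, indeed compact, inverse, so that (\ref{abs-global-hopf}) is equivalent to the fixed-point equation
\[
x=(L_0+K)^{-1}\left[\frac{\mathsf{p}}{2\pi}\mathscr{N}_0(x,\,\sigma,\,\mathsf{p})+K(x)\right],
\]
whose right-hand side is an $S^1$-equivariant completely continuous (hence condensing) field, the continuity of $\mathscr{N}_0$ coming from Lemma~\ref{new-new-bounded-N-0} (resp. Lemma~\ref{bounded-N-0}) together with the compactness of $(L_0+K)^{-1}$ or of the embedding $C^1_{2\pi}\hookrightarrow C_{2\pi}$. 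The set $M=\{(x_\sigma,\,\sigma,\,\mathsf{p})\}$ of trivial periodic solutions is a complete two-dimensional manifold by hypothesis, and (S3) at every center guarantees, via the Implicit Function Theorem as in the proof of Theorem~\ref{localhopf}, that it satisfies the submanifold condition needed by the abstract theorem.

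Second, I would record the local picture at each center. By Lemma~\ref{lemma-4-5} every Hopf bifurcation point of (\ref{SDDE-general-original}) is a center of the formal linearization (\ref{eq-3-1}), so it suffices to analyze the isolated centers of (\ref{abs-global-hopf-1}). At such a center $(x_{\sigma_0},\,\sigma_0,\,\mathsf{p}_0)$ the computation already carried out in the proof of Theorem~\ref{localhopf} — the isotypical decomposition $V=\overline{\bigoplus_k V_k}$, the identification of $\Psi_k(\sigma,\beta)$ with $\frac{1}{ik\beta}\Delta_{(x_\sigma,\,\sigma)}(ik\beta)$, and the factorization of $\mu_1$ through $\deg(\Psi_H,\mathscr{D})=\gamma(x_{\sigma_0},\,\sigma_0,\,\beta_0)$ — shows that the local $S^1$-degree invariant equals $\epsilon_0\,\gamma(x_{\sigma_0},\,\sigma_0,\,\beta_0)$ with $\epsilon_0=\mathrm{sgn}\det(\partial_1 f(\sigma_0)+\partial_2 f(\sigma_0))$, which is the quantity appearing in the balance formula.

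Third — and this is the crux — I would establish the properness/compactness that makes the continuum amenable to the global theorem, and in particular that the period cannot collapse. This is exactly the role of Lemma~\ref{period-bound}: under (S4) (and, in cases ii) and iii), the extra regularity of $\tau$ or Lipschitz hypothesis on $g$), any nonconstant periodic solution of (\ref{SDDE-general-original}) has minimal period bounded below by a positive constant depending only on $L_f$ (and $L_g$, $|\dot\tau|_{L^\infty}$). Hence $\mathsf{p}$ is bounded away from $0$ along $\mathcal{S}$, so the degenerate point $(x_0,\,\sigma_0,\,0)$ is not in $\mathcal{S}$ and problem (\ref{abs-global-hopf}) is well posed on $V\times\mathbb{R}^2$; moreover bounded subsets of $\mathcal{S}$ then have periods in a compact subinterval of $(0,\infty)$, and the compact-inverse structure makes such bounded pieces precompact. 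Finally, Lemma~\ref{virtual-period} (together with Lemma~\ref{linear-eq-center}) shows that any limit of nonconstant periodic solutions with periods bounded away from $0$ that converges to a constant must converge to a genuine center of (\ref{eq-3-1}), so $\mathcal{S}\cap M$ consists only of centers.

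With these ingredients the hypotheses of Theorem~2.5 of \cite{MR2644135} are verified, and it yields the stated dichotomy: the connected component $C(x_0,\,\sigma_0,\,\mathsf{p}_0)$ is either unbounded, or it is bounded, has compact closure, meets $M$ in finitely many centers $(x_i,\,\sigma_i,\,\mathsf{p}_i)$, and the local invariants balance,
\[
\sum_{i=0}^{q}\epsilon_i\,\gamma(x_i,\,\sigma_i,\,2\pi/\mathsf{p}_i)=0 ,
\]
with $\epsilon_i=\mathrm{sgn}\det(\partial_1 f(\sigma_i)+\partial_2 f(\sigma_i))$ as identified in the second step. I expect the main obstacle to be precisely the third step: ruling out period collapse along a bounded continuum and securing precompactness in the presence of the (implicitly defined, state-dependent) delay. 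This is why the three alternative hypotheses i)--iii) of Lemma~\ref{period-bound} are threaded through the statement — each provides, under a different structural assumption, the uniform lower bound on minimal periods that the abstract theorem needs to treat bounded components as proper continua.
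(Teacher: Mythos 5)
Your proposal is correct and follows essentially the same route as the paper: the paper itself gives no detailed proof but simply invokes Theorem~2.5 of \cite{MR2644135} together with the arguments of Theorem~4.6 there, relying on exactly the ingredients you assemble (the fixed-point reformulation with the compact inverse $(L_0+K)^{-1}$, the local $S^1$-degree computation $\epsilon_i\gamma_i$ from the proof of Theorem~\ref{localhopf}, Lemma~\ref{lemma-4-5} to locate bifurcation points at centers, and Lemmas~\ref{period-bound} and \ref{virtual-period} to exclude period collapse and ensure well-posedness on $V\times\mathbb{R}\times\mathbb{R}_+$). Your write-up is in fact more explicit than the paper's, but there is no substantive difference in approach.
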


\section{Global Hopf bifurcation of a model of regulatory dynamics}\label{Regulatory-Model}
 We consider the following extended Goodwin's model for regulatory dynamics:
\begin{align}\label{SDDE-hes1} 
\left\{
\begin{aligned}
\frac{\mathrm{d}x(t)}{\mathrm{d}t} & =-\mu_m x(t)+\frac{\alpha_m}{1+\left(\frac{z(t-\tau)}{\tilde{z}}\right)^h},\\
  \frac{\mathrm{d}y(t)}{\mathrm{d}t} & =-\mu_p y(t)+\alpha_p x(t-\tau),\\
   \frac{\mathrm{d}z(t)}{\mathrm{d}t} & =-\mu_e z(t)+\alpha_e y(t-\tau),\\
 \tau(t)&=c(x(t)-x(t-\tau)),
\end{aligned}
\right.
\end{align}
 where $x$ is the  concentration of mRNA, $y$ is  the  concentration of the related protein; $z$ is the concentration of an active enzyme which controls the level of the metabolite functioning as repressor at the DNA level; $\mu_m$,  $\mu_p$ and $\mu_e$ are nonnegative degradation rates; $\alpha_m$, $\alpha_p$ and $\alpha_e$  are positive coefficients for the inhibition/activation terms;  $c$ and $\tilde{z}$ are positive constants; $h$ is an even positive integer.  The  Goodwin's model \cite{Goodwin} without delay ($\tau=0$) has been extensively studied in system biology modeling various regulatory dynamics. Note that if we freeze the delay $\tau$ at the stationary state in system~(\ref{SDDE-hes1}), it becomes the classic Goodwin's model without delay.
 
We are interested in the onset and termination of each Hopf bifurcation branch of periodic solutions  which are described as one of  the alternatives given in Theorem~\ref{global-new-th}. 
To be specific, we need to obtain the boundedness or unboundedness of the connected component of  the pairs of nonconstant periodic solution and parameter in the product space of the state and the parameter space.  In the following, we first analyze the local Hopf bifurcation of system~(\ref{SDDE-hes1})  and then consider  the boundedness of periodic solutions  of system~(\ref{SDDE-hes1}) for a  global Hopf bifurcation in light of Theorem~\ref{global-new-th}. 
 
 \subsection{Local Hopf bifurcation}
 Note that $h$ is an even positive integer. Every stationary point $(x,\,y,\,z)$ of  System~(\ref{SDDE-hes1}) satisfies that
 \begin{align}\label{SDDE-hes1-equi} 
\left\{
\begin{aligned}
 -\mu_m x+\frac{\alpha_m}{1+\left(\frac{z}{\tilde{z}}\right)^h}= &\, 0,\\
  -\mu_p y+\alpha_p x= &\, 0,\\
   -\mu_e z+\alpha_e y= &\, 0,
  \end{aligned}
\right.
\end{align}  and $(x,\,y,\,z)=\left(x_0,\,\frac{\alpha_p}{\mu_p} x_0,\,\frac{\alpha_e\alpha_p}{\mu_e\mu_p} x_0\right)$, where by Descartes' rule of signs we know that $x=x_0$ is the unique solution of \[
\mu_m\left(\frac{\alpha_e\alpha_p}{\mu_e\mu_p\tilde{z}}\right)^hx^{h+1}+\mu_m x-\alpha_m=0.
\]  Freezing the delay of system~(\ref{SDDE-hes1}) at $\tau=0$ and linearizing the resulting nonlinear system  at the stationary state $(x,\,y,\,z)=\left(x_0,\,\frac{\alpha_p}{\mu_p} x_0,\,\frac{\alpha_e\alpha_p}{\mu_e\mu_p} x_0\right)$ lead to the characteristic polynomial 
\begin{align}\label{ch4-charact-eq}
& \det\left(\lambda I-\begin{bmatrix*}[c]
-\mu_m &  0 & -\frac{h\alpha_m{z}^{h-1}}{\tilde{z}^h\left(1+\left(\frac{z}{\tilde{z}}\right)^h\right)^2}\notag\\
\alpha_p  &  -\mu_p & 0\\
0 & \alpha_e & -\mu_e
\end{bmatrix*}\right)\\ =& (\lambda+\mu_m)(\lambda+\mu_p)(\lambda+\mu_e)+\frac{h\alpha_m{z}^{h-1}}{\tilde{z}^h\left(1+\left(\frac{z}{\tilde{z}}\right)^h\right)^2},
\end{align}
which has a unique negative root and a pair of imaginary roots.
In the following, we discuss the existence of purely imaginary eigenvalues as the parameter $\alpha_m$ varies.   We have 

\begin{lemma}\label{lemma-4-1}
Let $(x,\,y,\,z)$ be a stationary state of system~(\ref{SDDE-hes1}). Then
the following equation of $(x,\,\alpha_m)$
\begin{align}\label{sec4-eq-3}
\left\{
\begin{aligned}
(\mu_m+\mu_p)(\mu_e+\mu_p)(\mu_e+\mu_m)= &\frac{h\alpha_m^3}{\tilde{z}^h\mu_m^2}\cdot\left(\frac{\alpha_e\alpha_p}{\mu_e\mu_p}\right)^{h-1}{x}^{h-3},\\
\mu_mx=& \frac{\alpha_m}{1+\left(\frac{\alpha_e\alpha_p}{\mu_e\mu_p\tilde{z}}x\right)^{h}},
\end{aligned}
\right.
\end{align} has a unique solution for $(x,\,\alpha_m)=(x^*,\,\alpha_m^*)$.
\end{lemma}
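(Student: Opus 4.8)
The plan is to eliminate $\alpha_m$ between the two equations of (\ref{sec4-eq-3}) and reduce everything to a single scalar equation in one positive unknown, whose solvability and uniqueness are transparent. Throughout I work in the physically relevant range $x>0$, $\alpha_m>0$; note that $\mu_m,\mu_p,\mu_e>0$ is forced by the formula for the stationary state $(x,y,z)=\left(x_0,\frac{\alpha_p}{\mu_p}x_0,\frac{\alpha_e\alpha_p}{\mu_e\mu_p}x_0\right)$ and by the factor $\mu_m^2$ appearing in a denominator of (\ref{sec4-eq-3}), so every quantity introduced below is well defined and positive.

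First I would abbreviate $\kappa=\frac{\alpha_e\alpha_p}{\mu_e\mu_p}$ and set $w=\left(\frac{\kappa x}{\tilde z}\right)^h=\frac{\kappa^h x^h}{\tilde z^h}$, so that $x\mapsto w$ is a strictly increasing bijection of $(0,\infty)$ onto itself. The second equation of (\ref{sec4-eq-3}) then reads $\alpha_m=\mu_m x(1+w)$, and along this stationary locus $x\mapsto\alpha_m=\mu_m x(1+w)$ is likewise a strictly increasing bijection of $(0,\infty)$ onto itself; in particular the pair $(x,\alpha_m)$ on the locus is determined by $x$ alone. Substituting $\alpha_m=\mu_m x(1+w)$ into the first equation of (\ref{sec4-eq-3}) and using the identity $\frac{\kappa^{h-1}x^h}{\tilde z^h}=\frac{w}{\kappa}$, the right-hand side collapses to $\frac{h\mu_m}{\kappa}\,w(1+w)^3$, so the first equation becomes
\[
w(1+w)^3=\frac{\kappa\,(\mu_m+\mu_p)(\mu_e+\mu_p)(\mu_e+\mu_m)}{h\mu_m}=:C,
\]
where $C$ depends only on the fixed parameters and $C>0$ since each factor on the right is positive.

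Next I would study the scalar equation $\psi(w)=C$ with $\psi(w):=w(1+w)^3$ on $(0,\infty)$. Since $\psi'(w)=(1+w)^2(1+4w)>0$, $\psi(w)\to 0$ as $w\to 0^+$ and $\psi(w)\to+\infty$ as $w\to+\infty$, the map $\psi$ is a continuous, strictly increasing bijection of $(0,\infty)$ onto $(0,\infty)$; hence there is a unique $w^*\in(0,\infty)$ with $\psi(w^*)=C$. Undoing the substitutions yields $x^*=\frac{\tilde z}{\kappa}(w^*)^{1/h}$ and $\alpha_m^*=\mu_m x^*(1+w^*)$, which solve (\ref{sec4-eq-3}). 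Conversely, any positive solution $(x,\alpha_m)$ of (\ref{sec4-eq-3}) satisfies $\alpha_m=\mu_m x(1+w(x))$ by the second equation and then $w(x)=w^*$ by the first, so $x=x^*$ and $\alpha_m=\alpha_m^*$. This gives existence and uniqueness.

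The argument is elementary once the change of variables is in place; the only points requiring care are the algebraic bookkeeping in the elimination — especially the identity $\frac{\kappa^{h-1}x^h}{\tilde z^h}=\frac{w}{\kappa}$, which is exactly what produces the clean form $w(1+w)^3=C$ — and the observation that the constant $C$ is genuinely positive, which is where the standing positivity of $\mu_m,\mu_p,\mu_e,\alpha_e,\alpha_p$ is used. I do not expect any substantive obstacle: the strict monotonicity of $w\mapsto w(1+w)^3$ on $(0,\infty)$ carries the entire proof.
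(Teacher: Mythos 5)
Your proposal is correct and follows essentially the same route as the paper: both eliminate $\alpha_m$ via the second equation ($\alpha_m/x=\mu_m(1+(\kappa x/\tilde z)^h)$), substitute into the first, and reduce to a single strictly monotone scalar equation in $x^h$ (the paper writes it as $x^h\bigl(1+(\kappa x/\tilde z)^h\bigr)^3=\mathrm{const}$, which is your $w(1+w)^3=C$ up to a constant factor). Your version merely makes the monotonicity check and the positivity restriction $x>0$, $\alpha_m>0$ explicit, which the paper leaves implicit.
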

\begin{proof}
 Noticing that by the second equation of (\ref{sec4-eq-3}), $\frac{\alpha_m}{x}=\mu_m\left(1+\left(\frac{\alpha_e\alpha_p}{\mu_e\mu_p\tilde{z}}x\right)^{h}\right)$, we rewrite the first equation of  (\ref{sec4-eq-3}) into
\begin{align*}
x^h\left(1+\left(\frac{\alpha_e\alpha_p}{\mu_e\mu_p\tilde{z}}x\right)^{h}\right)^3=\frac{(\mu_m+\mu_p)(\mu_e+\mu_p)(\mu_e+\mu_m)}{\frac{h\mu_m}{\tilde{z}^h}\left(\frac{\alpha_e\alpha_p}{\mu_e\mu_p\tilde{z}}\right)^{h-1}},
\end{align*}which has a unique positive solution for $x^h$ and hence for $x$ with $x=x^*$ for some $x^*>0$. Then $\alpha_m=\alpha_m^*$ with $\alpha_m^*=x^*\mu_m\left(1+\left(\frac{\alpha_e\alpha_p}{\mu_e\mu_p\tilde{z}}x^*\right)^{h}\right)$. The solution of (\ref{sec4-eq-3}) is $(x,\,\alpha_m)=(x^*,\,\alpha_m^*)$. \qed \end{proof}

\begin{lemma}Let  $\alpha_m^*$ be as in Lemma~\ref{lemma-4-1} and $\lambda=u\pm iv$ be the imaginary roots of the characteristic polynomial at (\ref{ch4-charact-eq}).  Then $u$ and $v$ are continuously differentiable with respect to $\alpha_m$ and  $u=0$ if and only if $\alpha_m=\alpha_m^*$. Moreover, 
\[
\frac{\mathrm{d}u}{\mathrm{d}\alpha_m}\,\vline_{\,\alpha_m=\alpha_m^*}>0.
\]
\end{lemma}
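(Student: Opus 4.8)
The plan is to read (\ref{ch4-charact-eq}) as a cubic $P(\lambda;\alpha_m)=\lambda^3+a_1\lambda^2+a_2\lambda+a_3(\alpha_m)$ with the fixed positive coefficients $a_1=\mu_m+\mu_p+\mu_e$, $a_2=\mu_m\mu_p+\mu_p\mu_e+\mu_e\mu_m$, and $a_3(\alpha_m)=\mu_m\mu_p\mu_e+B(\alpha_m)$, where $B(\alpha_m)=\dfrac{h\alpha_m z^{h-1}}{\tilde{z}^h\left(1+\left(\frac{z}{\tilde{z}}\right)^h\right)^2}$ evaluated at the stationary state $z=\frac{\alpha_e\alpha_p}{\mu_e\mu_p}x_0(\alpha_m)$. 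First I would note that $x_0$ depends smoothly on $\alpha_m$: the left side of $\mu_m\left(\frac{\alpha_e\alpha_p}{\mu_e\mu_p\tilde{z}}\right)^h x^{h+1}+\mu_m x-\alpha_m=0$ is strictly increasing in $x$ on $(0,\infty)$, so by the Implicit Function Theorem $\alpha_m\mapsto x_0(\alpha_m)$ is $C^\infty$ with $x_0'(\alpha_m)>0$; hence $B$, and so $a_3$, is $C^1$ in $\alpha_m$. Since the excerpt records that the cubic has one negative real root and a non-real conjugate pair $u\pm iv$ (so $v\neq 0$ and the three roots are pairwise distinct, hence simple), the Implicit Function Theorem applied to $P(\lambda;\alpha_m)=0$ at a simple root produces a $C^1$ branch $\lambda(\alpha_m)=u(\alpha_m)+iv(\alpha_m)$, giving the asserted differentiability of $u$ and $v$.

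Next I would establish $u=0\iff\alpha_m=\alpha_m^*$. A cubic $\lambda^3+a_1\lambda^2+a_2\lambda+a_3$ with $a_1,a_2>0$ has a purely imaginary conjugate pair of roots exactly when $a_1a_2=a_3$: substituting $\lambda=i\omega$ with $\omega\neq0$ forces $\omega^2=a_2$ and $a_3=a_1a_2$, while conversely $a_3=a_1a_2$ yields the factorization $P(\lambda)=(\lambda^2+a_2)(\lambda+a_1)$, whose roots are $-a_1$ and $\pm i\sqrt{a_2}$. Thus $u(\alpha_m)=0$ iff $B(\alpha_m)=a_1a_2-\mu_m\mu_p\mu_e=(\mu_m+\mu_p)(\mu_p+\mu_e)(\mu_e+\mu_m)$, the last equality being a routine polynomial identity. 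Rewriting $B(\alpha_m)$ by means of the stationary relation $\mu_m x_0\left(1+\left(\frac{z}{\tilde{z}}\right)^h\right)=\alpha_m$, this equation together with that relation is precisely the system (\ref{sec4-eq-3}) for the pair $(x_0(\alpha_m),\alpha_m)$; Lemma~\ref{lemma-4-1} then gives that its only solution is $(x^*,\alpha_m^*)$, so $u(\alpha_m)=0\iff\alpha_m=\alpha_m^*$.

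For the transversality, differentiate $P(\lambda(\alpha_m);\alpha_m)=0$ to get $\dfrac{\mathrm{d}\lambda}{\mathrm{d}\alpha_m}=-\dfrac{B'(\alpha_m)}{3\lambda^2+2a_1\lambda+a_2}$. At $\alpha_m=\alpha_m^*$, where $\lambda=i\omega_0$ with $\omega_0^2=a_2$, the denominator equals $-2a_2+2a_1 i\omega_0$, and a short computation gives
\[
\left.\frac{\mathrm{d}u}{\mathrm{d}\alpha_m}\right|_{\alpha_m=\alpha_m^*}=\operatorname{Re}\left.\frac{\mathrm{d}\lambda}{\mathrm{d}\alpha_m}\right|_{\alpha_m=\alpha_m^*}=\frac{B'(\alpha_m^*)}{2\,(a_1^2+a_2)}.
\]
Since $a_1^2+a_2>0$, it remains to show $B'(\alpha_m^*)>0$. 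Using $1+\left(\frac{z}{\tilde{z}}\right)^h=\frac{\alpha_m}{\mu_m x_0}$ to eliminate $\alpha_m$, one finds $B=(\text{positive constant})\cdot\dfrac{x_0^h}{1+\left(\frac{\alpha_e\alpha_p}{\mu_e\mu_p\tilde{z}}\right)^h x_0^h}$, which is a strictly increasing function of $x_0$; combined with $x_0'(\alpha_m)>0$ this yields $B'(\alpha_m)>0$ for every $\alpha_m$, in particular $B'(\alpha_m^*)>0$, so $\left.\frac{\mathrm{d}u}{\mathrm{d}\alpha_m}\right|_{\alpha_m=\alpha_m^*}>0$.

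The routine parts are the Routh--Hurwitz factorization and the implicit-function-theorem smoothness. The two steps needing care are identifying the condition $u=0$ with the system (\ref{sec4-eq-3}) in a form clean enough to invoke Lemma~\ref{lemma-4-1}, and proving the monotonicity $B'(\alpha_m^*)>0$ (equivalently that $B$ is increasing along the stationary branch), which is the real content behind the sign of the transversality constant.
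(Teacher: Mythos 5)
Your proof is correct and follows the same overall architecture as the paper's: smoothness of the root branch via the Implicit Function Theorem at a simple root, reduction of the condition $u=0$ to the system (\ref{sec4-eq-3}) so that Lemma~\ref{lemma-4-1} delivers uniqueness, and transversality via implicit differentiation combined with monotonicity of the constant term $c_0=B(\alpha_m)$ along the stationary branch. The execution differs in two useful ways. First, you identify the purely imaginary pair through the Routh--Hurwitz identity $a_3=a_1a_2$ and the factorization $(\lambda^2+a_2)(\lambda+a_1)$, which packages in one line what the paper obtains by splitting (\ref{ch4-charact-eq5}) into real and imaginary parts; the resulting condition $B=(\mu_m+\mu_p)(\mu_p+\mu_e)(\mu_e+\mu_m)$ is the same. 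Second, for the transversality you differentiate the complex equation directly and take the real part, arriving at the closed form $u'(\alpha_m^*)=B'(\alpha_m^*)/\bigl(2(a_1^2+a_2)\bigr)$; the paper instead solves a real $2\times 2$ system, and in its first differentiated equation the coefficient of $v'$ reads $-2v(\mu_m+\mu_p)$ where it should be $-2v(\mu_m+\mu_p+\mu_e)=-2a_1v$, so your denominator $4a_2^2+4a_1^2a_2$ is actually the correct one (the discrepancy is harmless for the sign, which is all that is needed). Your closed form for $B$ along the stationary branch, namely a positive constant times $x_0^h/\bigl(1+(\tfrac{\alpha_e\alpha_p}{\mu_e\mu_p\tilde z})^hx_0^h\bigr)$, also differs from the paper's rewriting of $c_0$ as a positive-coefficient polynomial in $x_0^h$; both are increasing in $x_0$, and combined with $x_0'(\alpha_m)>0$ both give $B'>0$, so the conclusion stands either way. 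The only caveat, shared equally with the paper, is that both arguments presuppose the root structure asserted after (\ref{ch4-charact-eq}) (one simple negative real root plus a genuinely non-real conjugate pair for all relevant $\alpha_m$), which is what guarantees simplicity of the roots and hence the applicability of the Implicit Function Theorem.
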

\begin{proof}Let $(x,\,y,\,z)=\left(x_0,\,\frac{\alpha_p}{\mu_p} x_0,\,\frac{\alpha_e\alpha_p}{\mu_e\mu_p} x_0\right)$ be a stationary state of System~(\ref{SDDE-hes1}) and let
\[
F(\lambda,\,\alpha_m)= (\lambda+\mu_m)(\lambda+\mu_p)(\lambda+\mu_e)+\frac{h\alpha_m{z}^{h-1}}{\tilde{z}^h\left(1+\left(\frac{z}{\tilde{z}}\right)^h\right)^2}.
\]Noticing that $z=\frac{\alpha_e\alpha_p}{\mu_e\mu_p} x_0$ and
\[
\frac{\mathrm{d}x_0}{\mathrm{d}\alpha_m}=\frac{1}{\mu_m+\mu_m(h+1)\left(\frac{\alpha_e\alpha_p}{\mu_e\mu_p\tilde{z}}\right)^hx_0^h},
\]we know that $F$ is continuously differentiable with respect to $(\lambda,\,\alpha_m)$. Let $(\lambda,\,\alpha_m)$ be such that $F(\lambda,\,\alpha_m)=0$. Then we have
\begin{align*}
\frac{\mathrm{d}F}{\mathrm{d}\lambda}=&(\lambda+\mu_m)(\lambda+\mu_p)(\lambda+\mu_e)\left(\frac{1}{\lambda+\mu_m}+\frac{1}{\lambda+\mu_p}+\frac{1}{\lambda+\mu_e}\right)\\
&=-\frac{h\alpha_m{z}^{h-1}}{\tilde{z}^h\left(1+\left(\frac{z}{\tilde{z}}\right)^h\right)^2}
\left(\frac{1}{\lambda+\mu_m}+\frac{1}{\lambda+\mu_p}+\frac{1}{\lambda+\mu_e}\right).
\end{align*}
Next we show that $\frac{\mathrm{d}F}{\mathrm{d}\lambda}\neq 0$ at every solution of $F(\lambda,\,\alpha_m)=0$. Otherwise, $F$ has a repeated root and the root satisfies
\[
\frac{1}{\lambda+\mu_m}+\frac{1}{\lambda+\mu_p}+\frac{1}{\lambda+\mu_e}=0
\]which lead to two distinct negative roots:
\[
\lambda=\frac{-(\mu_m+\mu_p+\mu_e)\pm\sqrt{(\mu_m+\mu_e)^2+\mu_p^2-\mu_p(\mu_m+\mu_e)}}{3}.
\]This is a contradiction. Then by the Implicit Function Theorem, $\lambda$ is continuously differentiable with respect to $\alpha_m$.

Next we bring $\lambda=u+iv$ into the  characteristic polynomial at (\ref{ch4-charact-eq}) we have
\begin{align}\label{ch4-charact-eq5}
\left\{
\begin{aligned}
&((u+\mu_m)(u+\mu_p)-v^2)(u+\mu_e)-(\mu_m+\mu_p+2u)v^2+c_0=0\\
&[(u+\mu_m)(u+\mu_p)-v^2+(u+\mu_e)(\mu_m+\mu_p+2u)]v=0,
\end{aligned}
\right.
\end{align}where $c_0=\frac{h\alpha_m{z}^{h-1}}{\tilde{z}^h\left(1+\left(\frac{z}{\tilde{z}}\right)^h\right)^2}=\frac{h\alpha_m^3}{\tilde{z}^h\mu_m^2}\cdot\left(\frac{\alpha_e\alpha_p}{\mu_e\mu_p}\right)^{h-1}{x}^{h-3}.$ If $u=0$, then (\ref{ch4-charact-eq5}) leads to
\begin{align}\label{sec4-eq-3-v-square}
\left\{
\begin{aligned}
(\mu_m+\mu_p)(\mu_e+\mu_p)(\mu_e+\mu_m)= &\, c_0,\\
 \mu_m\mu_p+\mu_e(\mu_m+\mu_p)= &\, v^2.
\end{aligned}
\right.
\end{align}where $x$ satisfies $\mu_mx= \frac{\alpha_m}{1+\left(\frac{\alpha_e\alpha_p}{\mu_e\mu_p\tilde{z}}x\right)^{h}}.$ By  Lemma~\ref{sec4-eq-3}, we have $\alpha_m=\alpha_m^*.$ By the uniqueness of $\alpha_m^*$, $u=0$ if and only if $\alpha_m=\alpha_m^*$.

To compute $\frac{\mathrm{d}u}{\mathrm{d}\alpha_m}$ at $\alpha_m=\alpha_m^*$, we take derivatives with respect to $\alpha_m$ on both sides of the equations at (\ref{ch4-charact-eq5}) and then let $u=0$, we obtain
\begin{align*}
\left\{
\begin{aligned}
[(\mu_e\mu_p+\mu_e\mu_m+\mu_m\mu_p)-3v^2]u'-2v(\mu_m+\mu_p)v'+c_0' &=0,\\
[2(\mu_m+\mu_p+\mu_e)v]u'+[(\mu_e\mu_p+\mu_e\mu_m+\mu_m\mu_p)-3v^2]v'= &\, 0.
\end{aligned}
\right.
\end{align*}Then we have
\[
\frac{\mathrm{d}u}{\mathrm{d}\alpha_m}\,\vline_{\,\alpha_m=\alpha_m^*}=\frac{-c'_0((\mu_e\mu_p+\mu_e\mu_m+\mu_m\mu_p)-3v^2)}{[(\mu_e\mu_p+\mu_e\mu_m+\mu_m\mu_p)-3v^2]^2+4v^2(\mu_m+\mu_p)(\mu_m+\mu_p+\mu_e)}.
\]
By the second equation of (\ref{sec4-eq-3-v-square}), we have
\[
\frac{\mathrm{d}u}{\mathrm{d}\alpha_m}\,\vline_{\,\alpha_m=\alpha_m^*}=\frac{2c'_0(\mu_e\mu_p+\mu_e\mu_m+\mu_m\mu_p)}{[(\mu_e\mu_p+\mu_e\mu_m+\mu_m\mu_p)-3v^2]^2+4v^2(\mu_m+\mu_p)(\mu_m+\mu_p+\mu_e)}.
\]Noticing that
\begin{align*}
c_0 & =\frac{h\alpha_m^3}{\tilde{z}^h\mu_m^2}\cdot\left(\frac{\alpha_e\alpha_p}{\mu_e\mu_p}\right)^{h-1}{x}^{h-3}\\
 & = \frac{h}{\tilde{z}^h\mu_m^2}\cdot\left(\frac{\alpha_e\alpha_p}{\mu_e\mu_p}\right)^{h-1}{x}^{h}\left(\frac{\alpha_m}{x}\right)^3\\
 & = \frac{h\mu_m}{\tilde{z}^h}\cdot\left(\frac{\alpha_e\alpha_p}{\mu_e\mu_p}\right)^{h-1}{x}^{h}\left(1+\left(\frac{\alpha_e\alpha_p}{\mu_e\mu_p\tilde{z}}x\right)^{h}\right)^3 
\end{align*}can be regarded as a fourth order polynomial of $x^h$ with positive coefficients, and that $\frac{\mathrm{d}x_0}{\mathrm{d}\alpha_m}=\frac{1}{\mu_m+\mu_m(h+1)\left(\frac{\alpha_e\alpha_p}{\mu_e\mu_p\tilde{z}}\right)^hx_0^h}>0$, we have
\[
\frac{\mathrm{d}c_0}{\mathrm{d}\alpha_m}\,\vline_{\,\alpha_m=\alpha_m^*}>0,
\] hence $\frac{\mathrm{d}u}{\mathrm{d}\alpha_m}\,\vline_{\,\alpha_m=\alpha_m^*}>0$.\qed
\end{proof}
Notice that $\frac{\mathrm{d}u}{\mathrm{d}\alpha_m}\,\vline_{\,\alpha_m=\alpha_m^*}>0$ implies the crossing number  at the stationary point $(x(\alpha_m^*),\,y(\alpha_m^*),\,z(\alpha_m^*))$ satisifes:
\[\gamma(x(\alpha_m^*),\,y(\alpha_m^*),\,z(\alpha_m^*),\,\alpha_m^*,\,v(\alpha_m^*))\neq
0.\] Moreover, we can check that conditions ($S1$--$S3$) for Theorem~\ref{localhopf} are satisfied. Then we have
 the following local Hopf bifurcation theorem for system~(\ref{SDDE-hes1}).
 \begin{theorem}\label{local-Hes1}
 Let  $\alpha_m^*$ be as in Lemma~\ref{lemma-4-1}. Then  system~(\ref{SDDE-hes1}) undergoes Hopf bifurcation near the stationary point $(x(\alpha_m^*),\,y(\alpha_m^*),\,z(\alpha_m^*))$ as $\alpha_m$ varies near $\alpha_m^*$.
 \end{theorem}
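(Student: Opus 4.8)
The plan is to verify the hypotheses of Theorem~\ref{new-localhopf} (equivalently Theorem~\ref{localhopf}, since by Theorems~\ref{localhopf} and \ref{new-localhopf} the two forms share the same set of Hopf bifurcation points) with $N=3$, state variable $(x,\,y,\,z)$, and bifurcation parameter $\sigma=\alpha_m$. First I would cast system~(\ref{SDDE-hes1}) in the abstract form: let $f(\theta_1,\,\theta_2,\,\alpha_m)$ be the right-hand side of the three differential equations, with $\theta_1$ the undelayed and $\theta_2$ the delayed copy of $(x,\,y,\,z)$, and let $g(\gamma_1,\,\gamma_2,\,\alpha_m)=c(\gamma_1^{(1)}-\gamma_2^{(1)})$ be the difference of the first coordinates. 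Since the Hill nonlinearity $\alpha_m/(1+(z/\tilde z)^h)$ is $C^\infty$ in $(z,\,\alpha_m)$ and all remaining terms are linear, $f$ is $C^2$ (indeed smooth), so (S1) holds; $g$ is affine, so (S2) holds.

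Next I would identify the stationary state and check (S3). By Lemma~\ref{lemma-4-1} and the preceding discussion, at $\alpha_m=\alpha_m^*$ there is a unique stationary point $\bigl(x^*,\,\tfrac{\alpha_p}{\mu_p}x^*,\,\tfrac{\alpha_e\alpha_p}{\mu_e\mu_p}x^*\bigr)$, and the algebraic equation forces $\tau_{\alpha_m}=c(x^*-x^*)=0$; that is, the delay vanishes at equilibrium. A direct computation shows $(\partial_1+\partial_2)f$ evaluated at this point is exactly the Jacobian matrix displayed in (\ref{ch4-charact-eq}), so its determinant equals the characteristic polynomial (\ref{ch4-charact-eq}) evaluated at $\lambda=0$, namely $\mu_m\mu_p\mu_e+c_0$ with $c_0>0$, which is strictly positive. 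Hence $(\partial_1+\partial_2)f$ is nonsingular and (S3) holds; the same computation shows that $0$ is not a characteristic value, so the stationary state is nonsingular.

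I would then verify that this stationary point is an isolated center with nonzero crossing number. Because $\tau_{\alpha_m}=0$, the factor $e^{-\omega\tau_{\alpha_m}}$ in (\ref{character-matrix}) equals $1$, so the characteristic equation (\ref{eq-3-2}) of the formal linearization reduces to $\det\bigl(\omega I-(\partial_1+\partial_2)f\bigr)=0$, which is precisely (\ref{ch4-charact-eq}). By the discussion following (\ref{ch4-charact-eq}) together with the lemma preceding this theorem, at $\alpha_m=\alpha_m^*$ this cubic has one simple negative real root and a pair of simple nonzero purely imaginary roots, and a purely imaginary root occurs for no other $\alpha_m$ near $\alpha_m^*$; thus the set of nonzero purely imaginary characteristic values is nonempty and discrete, and the center is isolated. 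Finally, the transversality estimate $\left.\frac{\mathrm{d}u}{\mathrm{d}\alpha_m}\right|_{\alpha_m=\alpha_m^*}>0$ from that lemma, together with simplicity of the imaginary pair, gives a nonzero crossing number $\gamma(x(\alpha_m^*),\,y(\alpha_m^*),\,z(\alpha_m^*),\,\alpha_m^*,\,v(\alpha_m^*))\neq 0$, as already remarked in the text. With (S1)--(S3), the isolated-center property, and $\gamma\neq 0$ in hand, Theorem~\ref{new-localhopf} yields the asserted sequence of nonconstant periodic solutions of (\ref{SDDE-hes1}) bifurcating from the stationary point as $\alpha_m$ crosses $\alpha_m^*$.

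The only genuinely delicate point, and the one I would be most careful about, is the vanishing of the delay at equilibrium: one must check that $\tau_{\alpha_m}=0$ renders the formal linearization an ordinary differential equation, so that the Goodwin-type polynomial (\ref{ch4-charact-eq}) is exactly what governs the bifurcation, and that the nonsingularity hypothesis (S3) and the isolated-center property are inherited from this ODE analysis and are not disturbed by the state dependence of $\tau$. Everything else reduces to routine checks of smoothness and of the positivity of the coefficients $\mu_m$, $\mu_p$, $\mu_e$, $c_0$.
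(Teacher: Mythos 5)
Your proposal is correct and follows essentially the same route as the paper: the paper's own justification is precisely the remark preceding the theorem, namely that (S1)--(S3) hold, that the vanishing of $\tau$ at equilibrium reduces the formal linearization to the ODE with characteristic polynomial (\ref{ch4-charact-eq}), and that the two lemmas of that subsection give an isolated center with $\left.\frac{\mathrm{d}u}{\mathrm{d}\alpha_m}\right|_{\alpha_m=\alpha_m^*}>0$ and hence a nonzero crossing number, so that Theorem~\ref{localhopf} (equivalently Theorem~\ref{new-localhopf}) applies. Your write-up merely makes these checks explicit, including the nonsingularity of $(\partial_1+\partial_2)f$ via the positivity of $\mu_m\mu_p\mu_e+c_0$, which is consistent with the paper's intent.
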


  \subsection{Global Hopf bifurcation}
  In this section, we develop a global Hopf bifurcation theory for  system~(\ref{SDDE-hes1}). By Lemma~\ref{sec4-eq-3} and Theorem~\ref{local-Hes1}, we know that $(x(\alpha_m^*),\,y(\alpha_m^*),\,z(\alpha_m^*))$ is the only Hopf bifurcation point and is an isolated center. To apply the global Hopf bifurcation theorem~\ref{global-new-th}, it remains to check condition (S4) and  one of the conditions among  i), ii) and iii) at Lemma~\ref{period-bound}. We first consider the boundedness of periodic solutions.
 \begin{theorem}\label{bounded-periodic-solutions}
 Let $(x,\,y,\,z)$ be a periodic solution of system~(\ref{SDDE-hes1}). Then   $(x,\,y,\,z)$ satisfies for every $t\in\mathbb{R}$,
 \begin{align*}
 0<x(t)\leq \frac{\alpha_m}{\mu_m},\, 0<y(t)\leq \frac{\alpha_p\alpha_m}{\mu_p\mu_m},\,0<z(t)\leq \frac{\alpha_e\alpha_p\alpha_m}{\mu_e\mu_p\mu_m}.
 \end{align*}
 \end{theorem}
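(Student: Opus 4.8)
The plan is to exploit the triangular (cascade) structure of system~(\ref{SDDE-hes1}) together with the elementary fact that a $C^1$ periodic function attains its extrema at interior points where its derivative vanishes. Since $h$ is an even positive integer, the nonlinearity $\alpha_m/(1+(z/\tilde z)^h)$ takes values in $(0,\,\alpha_m]$ for \emph{every} real $z$, so the first equation has the form $\dot x(t)=-\mu_m x(t)+\phi(t)$ with $0<\phi(t)\le\alpha_m$; analogously the second and third equations are driven by $\alpha_p x(t-\tau)$ and by $\alpha_e y(t-\tau)$ respectively. Because a periodic solution is defined on all of $\mathbb{R}$ and $\tau$ is the function associated to $x$ by Lemma~\ref{Lemma-2-1}, every delayed evaluation appearing below is unambiguous, even if $\tau$ should fail to be pointwise nonnegative.

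First I would treat $x$. Let $t_-$ and $t_+$ be points at which $x$ attains its minimum and maximum over a period; since $x$ is $C^1$ and periodic, $\dot x(t_\pm)=0$, and the first equation gives $\mu_m x(t_\pm)=\alpha_m/(1+(z(t_\pm-\tau)/\tilde z)^h)$. The right-hand side is strictly positive, so $x(t_-)>0$ and hence $x(t)\ge x(t_-)>0$ for all $t$; and it is at most $\alpha_m$, so $x(t_+)\le\alpha_m/\mu_m$ and hence $x(t)\le\alpha_m/\mu_m$ for all $t$.

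Next, with $0<x\le\alpha_m/\mu_m$ established, I would repeat the same extremum argument for $y$ using $\dot y=-\mu_p y+\alpha_p x(t-\tau)$: at a minimum of $y$ one has $\mu_p y=\alpha_p x(\cdot-\tau)>0$, and at a maximum $\mu_p y=\alpha_p x(\cdot-\tau)\le\alpha_p\alpha_m/\mu_m$, which yields $0<y(t)\le\alpha_p\alpha_m/(\mu_p\mu_m)$. Feeding this into the third equation $\dot z=-\mu_e z+\alpha_e y(t-\tau)$ and arguing once more at the extrema of $z$ gives $0<z(t)\le\alpha_e\alpha_p\alpha_m/(\mu_e\mu_p\mu_m)$. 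This cascade closes the proof.

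The only delicate point is the standing nondegeneracy of the degradation rates: the division in the extremum argument, and indeed the very meaning of the asserted bounds, requires $\mu_m,\mu_p,\mu_e>0$. When one of them vanishes, say $\mu_p=0$, the corresponding equation reads $\dot y=\alpha_p x(t-\tau)>0$, so $y$ is strictly increasing and no nonconstant periodic solution exists; hence the statement is then vacuous, and I would dispose of this in a remark. A secondary point worth one sentence is regularity: any continuous periodic solution of (\ref{SDDE-hes1}) is automatically $C^1$ because the right-hand side is continuous along the orbit, which legitimizes the use of $\dot x(t_\pm)=0$ and needs no smallness or neighborhood hypothesis.
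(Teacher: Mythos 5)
Your proof is correct, but it takes a genuinely different route from the paper's. You evaluate each equation at the points where the relevant component attains its maximum and minimum over a period (where the derivative of a $C^1$ periodic function vanishes), and read the bounds off directly; the cascade structure then propagates the bounds from $x$ to $y$ to $z$. The paper instead integrates the differential inequality $\dot{x}\leq -\mu_m x+\alpha_m$ via Gronwall, uses $x(t)=x(t+np)$ and lets $n\to\infty$ to extract the upper bound, obtains the lower bound $x\geq 0$ by applying the same device to the reflected variables $(\bar{x},\bar{y},\bar{z})=(-x,-y,-z)$, and finally upgrades $\geq 0$ to $>0$ by a separate local contradiction argument (if $x(t_0)=0$ then $\dot{x}(t_0)>0$, forcing $x<0$ just before $t_0$). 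Your extremum argument is shorter and delivers the strict inequalities in one step, at the cost of being specific to periodic solutions; the Gronwall route is the one that generalizes to asymptotic bounds for non-periodic solutions, which is presumably why the paper uses it, though nothing in the theorem requires that generality. Both arguments rest on the same two structural facts — $h$ even makes the Hill term take values in $(0,\alpha_m]$ for every real $z$, and the delayed arguments are well defined because the solution lives on all of $\mathbb{R}$ — and your closing remarks on the $C^1$ regularity of periodic solutions and on the degenerate case of a vanishing degradation rate are correct and harmless, though the paper implicitly assumes $\mu_m,\mu_p,\mu_e>0$ for the stated bounds to make sense.
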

 \begin{proof} Note that $h>0$ is an even integer. We have
 $\dot{x}(t)\leq -\mu_m x(t)+\alpha_m$, which by Gronwall's inequality leads to
 \begin{align}\label{bounded-1}
 x(t)\leq e^{-\mu_mt}x(0)+\frac{\alpha_m}{\mu_m}\left(1-e^{-\mu_m t}\right).
 \end{align}Since $x$ is periodic, there exists $p>0$ such that $x(t)=x(t+p)$ for every $t\in\mathbb{R}$ and for every $n\in\mathbb{N}$, we have $x(t)=x(t+np)$. Then  for every $t\in\mathbb{R}$  we have
$x(t)=x(t+np)\leq e^{-\mu_m(t+np)}x(0)+\frac{\alpha_m}{\mu_m}\left(1-e^{-\mu_m (t+np)}\right)\rightarrow  \frac{\alpha_m}{\mu_m}$ as $n\rightarrow \infty.$ Therefore, we have $x(t)\leq \frac{\alpha_m}{\mu_m}$ for every $t\in\mathbb{R}$. 

By the same token, with $x(t-\tau)\leq \frac{\alpha_m}{\mu_m}$, we obtain from the second equation of system~(\ref{SDDE-hes1}) that
$y(t)\leq \frac{\alpha_p\alpha_m}{\mu_p\mu_m}$, $t\in\mathbb{R}$, and subsequently from the third equation of system~(\ref{SDDE-hes1}) that $z(t)\leq \frac{\alpha_e\alpha_p\alpha_m}{\mu_e\mu_p\mu_m}$ for every $t\in\mathbb{R}$. 

To obtain lower bounds of $x,\,y$ and $z$, let $\bar{x}=-x$, $\bar{y}=-y$ and $\bar{z}=-z$. Then  system~(\ref{SDDE-hes1}) becomes
\begin{align}\label{SDDE-hes1-bar} 
\left\{
\begin{aligned}
\frac{\mathrm{d}\bar{x}(t)}{\mathrm{d}t} & =-\mu_m \bar{x}(t)-\frac{\alpha_m}{1+\left(\frac{\bar{z}(t-\tau)}{\bar{z}}\right)^h},\\
  \frac{\mathrm{d}\bar{y}(t)}{\mathrm{d}t} & =-\mu_p \bar{y}(t)+\alpha_p \bar{x}(t-\tau),\\
   \frac{\mathrm{d}\bar{z}(t)}{\mathrm{d}t} & =-\mu_e \bar{z}(t)+\alpha_e \bar{y}(t-\tau),\\
 \tau(t)&=c(\bar{x}(t-\tau)-\bar{x}(t)),
\end{aligned}
\right.
\end{align}
We have
 $\dot{\bar{x}}(t) < -\mu_m \bar{x}(t)$, which  leads to
 \begin{align}\label{bounded-1}
 \bar{x}(t) < e^{-\mu_mt}\bar{x}(0).
 \end{align} Note that $\bar{x}$ is also $p$-periodic. For every $t\in\mathbb{R}$  we have
\[
\bar{x}(t)=\bar{x}(t+np) < e^{-\mu_m(t+np)}\bar{x}(0)\rightarrow 0 \mbox{ as $n\rightarrow \infty.$ }
\]Therefore, we have $\bar{x}(t)\leq 0$ for every $t\in\mathbb{R}$. By the same token, with $\bar{x}(t-\tau)\leq 0$, we obtain from the second equation of system~(\ref{SDDE-hes1-bar}) that
$\bar{y}(t)\leq 0$, $t\in\mathbb{R}$, and subsequently from the third equation of system~(\ref{SDDE-hes1-bar}) that $\bar{z}(t)\leq 0$ for every $t\in\mathbb{R}$. 
Then by the definition of $(\bar{x},\,\bar{y},\,\bar{z})$, we obtain that for every $t\in\mathbb{R}$, $x(t)\geq 0,\,y(t)\geq 0,\,z(t)\geq 0.$

If there exists $t_0\in\mathbb{R}$ such that $x(t_0)=0$, then by the first equation of system~(\ref{SDDE-hes1}) we have $\dot{x}(t_0)>0$. By the continuity of $\dot{x}$, there exists $\delta>0$ such that $x$ is strictly increasing in $(t_0-\delta,\,t_0+\delta)$.
so that $x(t)<0$ for $t\in (t_0-\delta,\,t_0)$. This is a contradiction.  By the same token we have $y(t)>0$ and $z(t)>0$ for every $t\in\mathbb{R}$.\qed
 \end{proof}

 \begin{lemma}\label{sec4-lipschitz}
 Let $f_0: \mathbb{R}^3\times\mathbb{R}^3\times\mathbb{R}\rightarrow\mathbb{R}^3$ be defined by
 \[
 f_0(\theta_1,\,\theta_2)=-\left(\begin{matrix}
 \mu_m\\
 \mu_p\\
 \mu_e
 \end{matrix}\right)\cdot \theta_1+\left(\begin{matrix}
 \frac{\alpha_m}{1+\left(\frac{z_2}{\,\tilde{z}}\right)^h}\\
 \alpha_p x_2\\
 \alpha_e y_2
 \end{matrix}\right)
 \]where $\theta_1=(x_1,\,y_1,\,z_1)$ and $\theta_2=(x_2,\,y_2,\,z_2)$. Then $f_0$ is Lipschitz continuous  with a Lipschitz constant
 \[
 L_f=\max\left\{\mu_m,\,\mu_p,\,\mu_e,\,\alpha_p,\,\alpha_e,\,\frac{\alpha_mh_0}{\tilde{z}}\right\},
 \]where $h_0=\frac{h\left(1-\frac{2}{h+1}\right)^{\frac{h-1}{h}}}{\left(1+\frac{h-1}{h+1}\right)^2}.$
 \end{lemma}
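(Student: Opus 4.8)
The plan is to split $f_0$ into the part linear in $\theta_1$ and the feedback part in $\theta_2$, estimating each separately in the Euclidean norm. Write $f_0(\theta_1,\theta_2)=-D\theta_1+G(\theta_2)$ with $D=\mathrm{diag}(\mu_m,\mu_p,\mu_e)$ and $G(\theta_2)=\bigl(\tfrac{\alpha_m}{1+(z_2/\tilde z)^h},\,\alpha_p x_2,\,\alpha_e y_2\bigr)$, where $\theta_2=(x_2,y_2,z_2)$. By the triangle inequality, $|f_0(\theta_1,\theta_2)-f_0(\bar\theta_1,\bar\theta_2)|\le |D(\theta_1-\bar\theta_1)|+|G(\theta_2)-G(\bar\theta_2)|$; since $D$ is diagonal, $|D(\theta_1-\bar\theta_1)|\le\max\{\mu_m,\mu_p,\mu_e\}\,|\theta_1-\bar\theta_1|$.

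For $G$ the only nonlinear component is the Hill-type function $g(\zeta):=\alpha_m/(1+(\zeta/\tilde z)^h)$. First I would compute $g'(\zeta)=-\alpha_m h\,\zeta^{h-1}\tilde z^{-h}(1+(\zeta/\tilde z)^h)^{-2}$ and, using that $h$ is even so that $(\zeta/\tilde z)^h=(|\zeta|/\tilde z)^h\ge 0$ and $|\zeta^{h-1}|=|\zeta|^{h-1}$, rewrite $|g'(\zeta)|=\tfrac{\alpha_m}{\tilde z}\phi(u)$ with $u=|\zeta|/\tilde z\ge 0$ and $\phi(u)=h u^{h-1}(1+u^h)^{-2}$. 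Then I maximize $\phi$ over $u\ge 0$: since $h\ge 2$ we have $\phi(0)=0$ and $\phi(u)\to 0$ as $u\to\infty$, so the supremum is attained at an interior critical point, and solving $\phi'(u)=h u^{h-2}(1+u^h)^{-3}\bigl[(h-1)-(h+1)u^h\bigr]=0$ gives the unique positive root $u_\ast$ with $u_\ast^h=(h-1)/(h+1)=1-\tfrac{2}{h+1}$, which is a maximum because $\phi'$ changes sign from $+$ to $-$ there. Substituting $u_\ast$ and using $u_\ast^{h-1}=(u_\ast^h)^{(h-1)/h}$, one obtains $\max_{u\ge 0}\phi(u)=\phi(u_\ast)=\dfrac{h\,(1-\tfrac{2}{h+1})^{(h-1)/h}}{\bigl(1+\tfrac{h-1}{h+1}\bigr)^2}=h_0$. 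Hence $|g'(\zeta)|\le\alpha_m h_0/\tilde z$ for all $\zeta\in\mathbb{R}$, so by the Mean Value Theorem $g$ is Lipschitz with constant $\alpha_m h_0/\tilde z$.

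Combining, $|G(\theta_2)-G(\bar\theta_2)|^2=(g(z_2)-g(\bar z_2))^2+\alpha_p^2(x_2-\bar x_2)^2+\alpha_e^2(y_2-\bar y_2)^2\le \max\{\alpha_p,\alpha_e,\alpha_m h_0/\tilde z\}^2\,|\theta_2-\bar\theta_2|^2$, so the feedback part is Lipschitz with constant $\max\{\alpha_p,\alpha_e,\alpha_m h_0/\tilde z\}$. Adding the two estimates yields $|f_0(\theta_1,\theta_2)-f_0(\bar\theta_1,\bar\theta_2)|\le L_f\bigl(|\theta_1-\bar\theta_1|+|\theta_2-\bar\theta_2|\bigr)$ with $L_f=\max\{\mu_m,\mu_p,\mu_e,\alpha_p,\alpha_e,\alpha_m h_0/\tilde z\}$, as claimed. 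The only genuinely nontrivial step is the maximization of $\phi$: one must locate the critical point $u_\ast^h=(h-1)/(h+1)$ and carry out the algebraic simplification so that the bound comes out exactly as $h_0$ rather than a cruder constant; everything else is routine linear algebra together with the Mean Value Theorem.
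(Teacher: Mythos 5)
Your proof is correct and follows essentially the same route as the paper: split off the diagonal linear part in $\theta_1$, bound the feedback part in $\theta_2$ by the maximum of $\alpha_p$, $\alpha_e$ and the supremum of the derivative of the Hill function, and locate that supremum at $u^h=(h-1)/(h+1)=1-\tfrac{2}{h+1}$ to obtain exactly $h_0$. The paper states the maximization more tersely (just exhibiting the critical point $t=\pm(1-\tfrac{2}{h+1})^{1/h}$), whereas you carry out the sign analysis of $\phi'$ explicitly; the content is the same.
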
 
\begin{proof} We use the Mean Value theorem for integrals to obtain  a Lipschitz constant. Let $\tilde{\theta_1}=(\tilde{x}_1,\,\tilde{y}_1,\,\tilde{z}_1)$ and $\tilde{\theta_2}=(\tilde{x}_2,\,\tilde{y}_2,\,\tilde{z}_2)$. Then we have
\begin{align}\label{f-0-lip}
\left|f_0(\theta_1,\,\theta_2)-f_0(\tilde{\theta}_1,\,\tilde{\theta}_2)\right| \leq &
\max\left\{\mu_m,\,\mu_p,\,\mu_e\right\}|\theta_1-\tilde{\theta}_1|\notag\\
& +\max\left\{\alpha_p,\,\alpha_e,\,\sup_{z_2}\left|\frac{\mathrm{d}}{\mathrm{d}z_2}\frac{\alpha_m}{1+\left(\frac{z_2}{\,\tilde{z}}\right)^h}\right|\right\}|\theta_2-\tilde{\theta}_2|.
\end{align}We have
\begin{align*}
\frac{\mathrm{d}}{\mathrm{d}z_2}\frac{\alpha_m}{1+\left(\frac{z_2}{\,\tilde{z}}\right)^h}=\frac{\alpha_mh}{\tilde{z}}\frac{\left(\frac{z_2}{\,\tilde{z}}\right)^{h-1}}{\left(1+\left(\frac{z_2}{\,\tilde{z}}\right)^h\right)^2},
\end{align*}. Noticing that the map $\mathbb{R}\ni t\rightarrow \frac{t^{h-1}}{(1+t^h)^2}$ vanishes at $t=0$ and $t=\infty$ and that
\[
\frac{\mathrm{d}}{\mathrm{d}t}\frac{t^{h-1}}{(1+t^h)^2}=0,
\] if and only if $t=\pm\left(1-\frac{2}{h+1}\right)^{\frac{1}{h}}$, we obtain that
$\sup_{z_2}\left|\frac{\mathrm{d}}{\mathrm{d}z_2}\frac{\alpha_m}{1+\left(\frac{z_2}{\,\tilde{z}}\right)^h}\right|=\frac{\alpha_mh_0}{\tilde{z}}$ with \[
h_0=\frac{h\left(1-\frac{2}{h+1}\right)^{\frac{h-1}{h}}}{\left(1+\frac{h-1}{h+1}\right)^2},
\] and the supremum is achieved at $\frac{z_2}{\,\tilde{z}}=\left(1-\frac{2}{h+1}\right)^{\frac{1}{h}}$. Then by (\ref{f-0-lip})  $f_0$ is Lipschitz continuous  with a Lipschitz constant $L_f=\max\left\{\mu_m,\,\mu_p,\,\mu_e,\,\alpha_p,\,\alpha_e,\,\frac{\alpha_mh_0}{\tilde{z}}\right\}.$
\hfill\qed
\end{proof}

To apply the global Hopf bifurcation theorem,  we also use  Lemma~\ref{period-bound} to show the closure of all nontrivial periodic solutions bifurcating from the stationary point $(x(\alpha_m^*),\,y(\alpha_m^*),\,z(\alpha_m^*))$ will not include constant solution with zero period.
\begin{lemma}\label{c-1-tau} 
Let $(x,\,y,\,z)$ be a periodic solution of system~(\ref{SDDE-hes1}). If $\alpha_m<\frac{1}{c}$, then $\tau:\mathbb{R}\rightarrow\mathbb{R}$ given by $\tau(t)=c(x(t)-x(t-\tau(t)))$ exists and is continuously differentiable. 
\end{lemma}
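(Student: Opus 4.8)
The plan is to combine the a priori bounds on the periodic solution supplied by Theorem~\ref{bounded-periodic-solutions} with a global Implicit Function Theorem argument for the scalar equation $a=c\bigl(x(t)-x(t-a)\bigr)$, exactly as in the proof of Lemma~\ref{Lemma-2-1}(ii); the new feature is that the quantitative hypothesis $\alpha_m<1/c$ is precisely what keeps the relevant partial derivative away from zero. First I would record the crucial pointwise estimate on $\dot x$.

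By Theorem~\ref{bounded-periodic-solutions} we have $0<x(t)\le \alpha_m/\mu_m$ and $z(t)>0$ for all $t\in\mathbb{R}$, and since $h$ is an even positive integer, $\bigl(z(t-\tau)/\tilde z\bigr)^{h}\ge 0$. Hence, directly from the first equation of system~(\ref{SDDE-hes1}),
\[
-\alpha_m \;\le\; -\mu_m x(t) \;\le\; \dot x(t) \;=\; -\mu_m x(t)+\frac{\alpha_m}{1+\bigl(z(t-\tau)/\tilde z\bigr)^{h}} \;\le\; \alpha_m,
\]
so $|\dot x(t)|\le\alpha_m$ for every $t\in\mathbb{R}$; in particular $x$ is $\alpha_m$-Lipschitz on $\mathbb{R}$ by the Mean Value Theorem. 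This estimate is purely pointwise and does not presuppose continuity of $t\mapsto\tau(t)$. Under the hypothesis $\alpha_m<1/c$ it gives $c|\dot x(t)|\le c\alpha_m<1$ for all $t$.

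For existence, I would apply Lemma~\ref{Lemma-2-1}(i) with $g(\gamma_1,\gamma_2,\sigma)=c\bigl((\gamma_1)_1-(\gamma_2)_1\bigr)$ (equivalently, observe that $a\mapsto a-c\bigl(x(t)-x(t-a)\bigr)$ is continuous with values tending to $\pm\infty$ as $a\to\pm\infty$, so it vanishes at some $a=\tau(t)$). Uniqueness and Lipschitz continuity of $\tau$ then follow from the $\alpha_m$-Lipschitz property of $x$: if $\tau(t)=c\bigl(x(t)-x(t-\tau(t))\bigr)$ and $\tau(s)=c\bigl(x(s)-x(s-\tau(s))\bigr)$, then
\[
|\tau(t)-\tau(s)|\le c|x(t)-x(s)|+c|x(t-\tau(t))-x(s-\tau(s))|\le 2c\alpha_m|t-s|+c\alpha_m|\tau(t)-\tau(s)|,
\]
and since $c\alpha_m<1$ this yields $|\tau(t)-\tau(s)|\le \tfrac{2c\alpha_m}{1-c\alpha_m}|t-s|$, which also forces the solution $a=\tau(t)$ to be unique for each $t$ (and, by uniqueness and periodicity of $x$, $\tau$ inherits the period of $x$). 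Consequently $t\mapsto z(t-\tau(t))$ is continuous, so $\dot x$, given by the first equation of system~(\ref{SDDE-hes1}), is continuous; that is, $x\in C^{1}(\mathbb{R};\mathbb{R}^{3})$.

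Finally, to upgrade $\tau$ to class $C^1$, I would set $F(a,t)=a-c\bigl(x(t)-x(t-a)\bigr)$. Now that $x\in C^1$, $F$ is $C^1$, and $\partial_a F(a,t)=1-c\dot x(t-a)\ge 1-c\alpha_m>0$ for all $(a,t)$; thus $F(\cdot,t)$ is strictly increasing with a unique zero $a=\tau(t)$, and the Implicit Function Theorem gives $\tau\in C^1$ with $\dot\tau(t)=\dfrac{c\dot x(t)-c\dot x(t-\tau(t))}{1-c\dot x(t-\tau(t))}$, a ratio with bounded numerator and denominator bounded below by $1-c\alpha_m>0$, so $\tau\in C^1(\mathbb{R};\mathbb{R})$ as claimed. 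The one genuine obstacle is the apparent circularity — the Implicit Function Theorem needs $x\in C^1$, which in turn needs $\tau$ (hence $z(\cdot-\tau(\cdot))$) continuous — and it is resolved precisely by the direct Lipschitz estimate of the third paragraph, which uses only the pointwise bound $|\dot x|\le\alpha_m$ together with the standing assumption $\alpha_m<1/c$.
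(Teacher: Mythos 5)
Your proposal is correct, and its core is the same as the paper's: establish the pointwise bound $|\dot x|\le\alpha_m$ from the a priori bounds of Theorem~\ref{bounded-periodic-solutions}, note that $\alpha_m<1/c$ forces $\partial_a\bigl(a-c(x(t)-x(t-a))\bigr)=1-c\dot x(t-a)\ge 1-c\alpha_m>0$, and conclude $\tau\in C^1$ by the Implicit Function Theorem. Where you genuinely improve on the paper is the intermediate step. The paper disposes of existence \emph{and continuity} of $\tau$ with the single sentence ``follows from Lemma~\ref{Lemma-2-1}'', but part (i) of that lemma yields only existence (the paper itself remarks immediately afterwards that the resulting $\tau$ need not be continuous), and part (ii) requires $x$ to lie in a small $C^1$-neighborhood of a constant, which an arbitrary periodic solution need not do. This matters because the Implicit Function Theorem step needs $F(a,t)=a-c(x(t)-x(t-a))$ to be $C^1$ in $t$, i.e.\ it needs $x\in C^1$, which in turn needs $t\mapsto z(t-\tau(t))$ — hence $\tau$ — to be continuous: exactly the circularity you identify. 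Your direct contraction estimate
\[
(1-c\alpha_m)\,|\tau(t)-\tau(s)|\le 2c\alpha_m|t-s|,
\]
which uses only the pointwise bound $|\dot x|\le\alpha_m$ and the Mean Value Theorem, breaks this circle, and as a bonus gives uniqueness of $\tau(t)$ for each $t$ and periodicity of $\tau$ for free. So your argument is not just valid; it supplies the justification the paper's proof tacitly assumes.
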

\begin{proof} The existence and continuity of $\tau$ follows from Lemma~\ref{Lemma-2-1}.  Let $f_1: \mathbb{R}^2\rightarrow\mathbb{R}$ be defined by
\[
f_1(\tau,\,t)=\tau-c(x(t)-x(t-\tau)).
\]
Then $f_1$ is continuously differentiable with respect to $(\tau,\,t)$. Moreover, by (\ref{dot-x-1-c}) we have 
\[
\frac{\partial f_1(\tau,\,t)}{\partial \tau}=1-c\dot{x}(t-\tau).
\]
By the first equation of system~(\ref{SDDE-hes1}) and by Lemma~\ref{bounded-periodic-solutions} we have for every $t\in\mathbb{R}$, $\dot{x}(t)<\alpha_m$ and
\begin{align}\label{dot-x-1-c}
\dot{x}(t)\geq -\mu_m\frac{\alpha_m}{\mu_m}+\frac{\alpha_m}{1+\left(\frac{\alpha_e\alpha_p\alpha_m}{\mu_e\mu_p\tilde{z}}\right)^h}
>-\alpha_m.
\end{align}
Then we have $|\dot{x}|<\alpha_m$ and by (\ref{dot-x-1-c}) we have 
\[
\frac{\partial f_1(\tau,\,t)}{\partial \tau}=1-c\dot{x}(t-\tau)>0.
\]
By the Implicit Function Theorem, $\tau$ is continuously differentiable at $t\in\mathbb{R}$.\qed
\end{proof}
It follows from Lemma~\ref{c-1-tau} and ii) of Lemma~\ref{period-bound} that
if $\alpha_m<\frac{1}{c}$, then the period $\mathsf{p}$ of every nonconstant periodic solution 
satisfies $\mathsf{p}\geq\frac{4}{L_f(2+\|\dot{\tau}\|_{L^\infty})}>0$. 

Now we are in the position to state the global Hopf bifurcation theorem.

\begin{theorem} Let  $\alpha_m^*$ be as in Lemma~\ref{lemma-4-1} and $p^*=\frac{2\pi}{v^*}$ where $v^*>0$ is the imaginary part of eigenvalue of the formal linearization of  system~(\ref{SDDE-hes1}) at $\alpha_m=\alpha_m^*$. Suppose that
$\alpha_m^*<\frac{1}{c}$. There exists a connected component $\mathcal{C}$  of the closure of all nonconstant periodic solution of  system~(\ref{SDDE-hes1}) bifurcating from $(\alpha_m^*,\,p^*,\,x(\alpha_m^*),\,y(\alpha_m^*),\,z(\alpha_m^*))\in\mathbb{R}^2\times C(\mathbb{R};\mathbb{R}^3)$, which  satisfies that 
\begin{enumerate}
\item[$i)$] either the projection of $\mathcal{C}$ onto the parameter space of  the period $\mathsf{p}$ is unbounded.
\item[$i)$] or the projection of $\mathcal{C}$ onto the parameter space of $\alpha_m$ does not cross $\alpha=0$ but is not contained in any compact subset of the interval $(0,\,\frac{1}{c})$;

\end{enumerate}
\end{theorem}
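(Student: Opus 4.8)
The plan is to invoke the abstract global Hopf bifurcation result, Theorem~\ref{global-new-th}, for system~(\ref{SDDE-hes1}) with bifurcation parameter $\sigma=\alpha_m$ (here $N=3$), and then to unwind its conclusion using the a priori bounds proved above. First I would collect the hypotheses of Theorem~\ref{global-new-th} that are already in hand: (S1) and (S2) hold because the right-hand side of~(\ref{SDDE-hes1}) is smooth in all of its arguments; (S4) is Lemma~\ref{sec4-lipschitz}, which also supplies the explicit constant $L_f$; (S3) at the unique stationary state reduces to $\det\big(\partial_1 f(\alpha_m)+\partial_2 f(\alpha_m)\big)\neq 0$, and evaluating the characteristic polynomial~(\ref{ch4-charact-eq}) at $\lambda=0$ gives $\mu_m\mu_p\mu_e+c_0\neq 0$, so this is automatic; by Lemma~\ref{lemma-4-1} the formal linearization has the single isolated center $(x(\alpha_m^*),y(\alpha_m^*),z(\alpha_m^*))$, and by Lemma~\ref{lemma-4-5} this is the only possible Hopf bifurcation point. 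Finally, since $\alpha_m^*<\frac1c$, Lemma~\ref{c-1-tau} makes $\tau$ continuously differentiable along every periodic solution in the admissible range, so condition ii) of Lemma~\ref{period-bound} applies and yields a positive lower bound for the minimal period; hence $\mathcal S$ is well posed in $V\times\mathbb R\times\mathbb R_+$ and $M$, the set of trivial periodic solutions, is complete.

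With this preparation, Theorem~\ref{global-new-th} gives the dichotomy: the connected component $\mathcal C:=C(x_0,\sigma_0,\mathsf p_0)$ of the bifurcation point in $\mathcal S$ is either unbounded in $V\times\mathbb R\times\mathbb R_+$, or $\mathcal C\cap M$ is a finite set $\{(x_i,\sigma_i,\mathsf p_i)\}_{i=0}^q$ with $\sum_{i=0}^q\epsilon_i\,\gamma(x_i,\sigma_i,2\pi/\mathsf p_i)=0$. I would rule out the second case: by Lemma~\ref{lemma-4-5} the only center, hence the only point of $M$ that can be a bifurcation point, is $(x(\alpha_m^*),y(\alpha_m^*),z(\alpha_m^*))$, for which $\frac{\mathrm d u}{\mathrm d\alpha_m}\big|_{\alpha_m^*}>0$ gives crossing number $\gamma(x(\alpha_m^*),y(\alpha_m^*),z(\alpha_m^*),\alpha_m^*,v^*)\neq 0$, while $\epsilon_0=\mathrm{sgn}\det\big(\partial_1 f(\alpha_m^*)+\partial_2 f(\alpha_m^*)\big)\neq 0$. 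The sum then collapses to the single nonzero term $\epsilon_0\gamma_0$, a contradiction. Therefore $\mathcal C$ is unbounded in $V\times\mathbb R\times\mathbb R_+$.

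It remains to translate this abstract unboundedness into the two stated alternatives. Theorem~\ref{bounded-periodic-solutions} shows that for any periodic solution the sup-norm of $(x,y,z)$ is controlled by $\alpha_m$ (via $\alpha_m/\mu_m$, $\alpha_p\alpha_m/(\mu_p\mu_m)$, $\alpha_e\alpha_p\alpha_m/(\mu_e\mu_p\mu_m)$), so the $(x,y,z)$-projection of $\mathcal C$ is bounded on any portion of $\mathcal C$ where $\alpha_m$ stays bounded; in particular, on the admissible region $\alpha_m<\frac1c$ unboundedness of $\mathcal C$ cannot come through the state component alone. Consequently, if the projection of $\mathcal C$ onto the period parameter $\mathsf p$ is bounded — i.e.\ the first alternative fails — the remaining unboundedness must be carried by the $\alpha_m$-component, which forces the $\alpha_m$-projection of $\mathcal C$ not to be contained in any compact subset of $(0,\frac1c)$. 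For the last clause, that $\mathcal C$ does not cross $\alpha_m=0$, I would argue by the virtual-period Lemma~\ref{virtual-period}: along any sequence in $\mathcal C$ with $\alpha_{m,k}\to 0$, Theorem~\ref{bounded-periodic-solutions} forces $\|(x_k,y_k,z_k)\|\to 0$, while the minimal periods stay bounded away from $0$ (near $\alpha_m=0$ one has $\|\dot\tau\|_{L^\infty}\le \frac{2c\alpha_m}{1-c\alpha_m}\to 0$, so the lower bound of Lemma~\ref{period-bound}(ii) is uniform); hence the limit is the constant state and Lemma~\ref{virtual-period} would make $\pm i\,m\,2\pi/T_0$ roots of the characteristic equation of the linearization at $\alpha_m=0$, namely $(\lambda+\mu_m)(\lambda+\mu_p)(\lambda+\mu_e)=0$, which has no purely imaginary root — a contradiction. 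Thus $\alpha_m>0$ on all of $\mathcal C$.

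I expect the \emph{main obstacle} to be exactly this last translation step: fixing the region of $V\times\mathbb R\times\mathbb R_+$ on which Theorem~\ref{global-new-th} is applied so that the ``return to $M$'' branch is genuinely precluded, and controlling the minimal period together with the delay derivative $\dot\tau$ uniformly as $\alpha_m$ tends to the two endpoints $0$ and $\frac1c$ of the admissible interval; the rest is bookkeeping on top of the lemmas already established.
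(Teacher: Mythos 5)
Your overall route is the paper's: verify (S1)--(S4), invoke Theorem~\ref{global-new-th}, rule out the ``finite return to $M$'' alternative because $(x(\alpha_m^*),y(\alpha_m^*),z(\alpha_m^*))$ is the unique center and its crossing number is nonzero, and then use Theorem~\ref{bounded-periodic-solutions} to force the unboundedness of $\mathcal C$ into the period or the parameter. The genuine gap is the one you yourself flag at the end and leave unresolved: Theorem~\ref{global-new-th} requires that \emph{every} periodic solution of the system satisfy one of conditions i)--iii) of Lemma~\ref{period-bound}, and Lemma~\ref{c-1-tau} delivers condition ii) only for $\alpha_m<\frac{1}{c}$. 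Applying the global theorem to~(\ref{SDDE-hes1}) with $\alpha_m$ ranging over all of $\mathbb{R}$ is therefore not justified; moreover, even granting abstract unboundedness, ``unbounded $\alpha_m$-projection'' is not the same statement as ``not contained in any compact subset of $(0,\frac{1}{c})$.'' The paper's resolution is a change of parameter $\alpha_m=q(\alpha)=\frac{2}{c\pi}\left(\arctan\alpha-\frac{\pi}{2}\right)+\frac{1}{c}$, an increasing homeomorphism from $\mathbb{R}$ onto $(0,\frac{1}{c})$; the global theorem is applied to the reparametrized system~(\ref{SDDE-hes2}), whose hypotheses hold for \emph{all} $\alpha\in\mathbb{R}$, and the unbounded component $\mathcal C_0$ is carried back by the homeomorphism $(q,\mathrm{id})$, which converts unboundedness in $\alpha$ precisely into the stated ``escapes every compact subset of $(0,\frac{1}{c})$'' alternative. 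This device is the missing ingredient in your proposal.

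Two smaller points. For the clause that $\mathcal C$ does not cross $\alpha_m=0$, the paper simply observes that at $\alpha_m=0$ the equation $\dot x=-\mu_m x$ admits no nonconstant periodic solution, and the reparametrization places $\alpha_m=0$ at $\alpha=-\infty$, so the component cannot reach it; your argument via Lemma~\ref{virtual-period} and the absence of purely imaginary roots of $(\lambda+\mu_m)(\lambda+\mu_p)(\lambda+\mu_e)=0$ is a valid, somewhat heavier, alternative that also handles limit points of the closure. Your uniform estimate on $\|\dot\tau\|_{L^\infty}$ near $\alpha_m=0$ becomes unnecessary once the reparametrization is in place.
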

\begin{proof}We first show that if $\alpha_m=0$, system~(\ref{SDDE-hes1}) has no nonconstant periodic solutions. Otherwise, let $(x,\,y,\,z)$ be a  nonconstant periodic solution with $\alpha_m=0$. Then from  system~(\ref{SDDE-hes1})  $\dot{x}=-\mu_mx$ implies that $x=0$ and subsequently $y=z=0$. This is a contradiction.
\end{proof}

In the following we consider $\alpha_m$ in $(0,\,\frac{1}{c})$ and introduce the following change of variables:
\begin{align}
\alpha_m=q(\alpha)=\frac{2}{c\pi}\left(\arctan \alpha-\frac{\pi}{2}\right)+\frac{1}{c},
\end{align}
where $q$ is an increasing function of $\alpha$ with $\lim_{\alpha\rightarrow-\infty}q(\alpha)=0$
 and $\lim_{\alpha\rightarrow+\infty}q(\alpha)=\frac{1}{c}$. Then system~(\ref{SDDE-hes1}) is rewritten as
 \begin{align}\label{SDDE-hes2} 
\left\{
\begin{aligned}
\frac{\mathrm{d}x(t)}{\mathrm{d}t} & =-\mu_m x(t)+\frac{q(\alpha)}{1+\left(\frac{z(t-\tau)}{\tilde{z}}\right)^h},\\
  \frac{\mathrm{d}y(t)}{\mathrm{d}t} & =-\mu_p y(t)+\alpha_p x(t-\tau),\\
   \frac{\mathrm{d}z(t)}{\mathrm{d}t} & =-\mu_e z(t)+\alpha_e y(t-\tau),\\
 \tau(t)&=c(x(t)-x(t-\tau)),
\end{aligned}
\right.
\end{align} with $\alpha\in\mathbb{R}$ and $\alpha^*=q^{-1}(\alpha_m^*)$ the critical value of $\alpha$ for a unique Hopf bifurcation point. By Theorem~\ref{local-Hes1}
There exists a connected component $\mathcal{C}_0$  of the closure of all nonconstant periodic solution of  system~(\ref{SDDE-hes2}) bifurcating from the stationary point $(\alpha^*,\,p^*,\,x(\alpha^*),\,y(\alpha^*),\,z(\alpha^*))\in\mathbb{R}^2\times C(\mathbb{R};\mathbb{R}^3)$.

By Lemma~\ref{sec4-lipschitz}, condition (S4) is satisfied by system~(\ref{SDDE-hes2}). By  Lemma~\ref{c-1-tau}, the function $\tau$ defined by $\tau(t)=c(x(t)-x(t-\tau(t)))$ for  a  nonconstant periodic solution $(x,\,y,\,z)$ of system~(\ref{SDDE-hes2}) is continuously differentiable. Hence by Lemma~\ref{period-bound}, the period $\mathsf{p}$ of every nonconstant periodic solution $(x,\,y,\,z)$ of system~(\ref{SDDE-hes2}) is positive. Notice that $(\alpha^*,\,p^*,\,x(\alpha^*),\,y(\alpha^*),\,z(\alpha^*))$ is the only bifurcation point of system~(\ref{SDDE-hes2}), by Theorem~\ref{global-new-th}, the connected component $\mathcal{C}_0$ is unbounded in $\mathbb{R}^2\times C(\mathbb{R};\mathbb{R}^3)$.

Notice that by Theorem~\ref{bounded-periodic-solutions}, the projection of $\mathcal{C}_0$ onto the space of $(x,\,y,\,z)\in C(\mathbb{R};\mathbb{R}^3)$ is bounded. The unboundedness of $\mathcal{C}_0$ is either because of the unbounded projection onto the parameter space of  the period $\mathsf{p}$, or the projection of $\mathcal{C}$ onto the parameter space of $\alpha$.

Notice that $q$ induce a homeomorphism $(q,\,id): \mathbb{R}^2\times C(\mathbb{R};\mathbb{R}^3)\rightarrow \mathbb{R}^2\times C(\mathbb{R};\mathbb{R}^3)$ defined by
\[
(q,\,id)(\alpha,\,h)=(q(\alpha),\,h).
\]The image $\mathcal{C}=(q,\,id)(\mathcal{C}_0)$ of $\mathcal{C}_0$ under  $(q,\,id)$
is a connected component  of the closure of all nonconstant periodic solution of  system~(\ref{SDDE-hes1}) bifurcating from the bifurcation point \[(\alpha_m^*,\,p^*,\,x(\alpha_m^*),\,y(\alpha_m^*),\,z(\alpha_m^*))\in\mathbb{R}^2\times C(\mathbb{R};\mathbb{R}^3),\] which  satisfies that either the projection of $\mathcal{C}$ onto the parameter space of  the period $\mathsf{p}$ is unbounded, or the projection of $\mathcal{C}$ onto the parameter space of $\alpha_m$ does not cross the hyperplane $\alpha_m=0$ but is not contained in any compact subset of the interval $(0,\,\frac{1}{c})$. \qed

\section{Concluding remarks}\label{Conclude}
Motivated by the extended Goodwin's model with a state-dependent delay governed by an algebraic equation, we developed a global Hopf bifurcation theory for differential-algebraic equations with state-dependent delay, using the $S^1$-equivariant degree. This is based on the framework described in \cite{MR2644135} where the technique of formal linearization is employed to obtain auxiliary linear systems at the stationary states which indicate local and global Hopf bifurcation using a homotopy argument. We remark that the local and global Hopf bifurcation theory we developed for system~(\ref{SDDE-general}) is also applicable for systems with the delay  given by $\tau(t)=h(x_t)$ where $h$  is a function of $x_t(s)=x(t+s), -r\leq s\leq 0,\,r>0$, provided that $\tau$ is continuous.

The local and global Hopf bifurcation theories are applied to the extended Goodwin's model which describes intracellular processes in the genetic regulatory dynamics. We obtained two alternatives for the connected component $\mathcal{C}$ of periodic solutions in the Fuller space $\mathbb{R}^2\times C(\mathbb{R};\mathbb{R}^3)$. Namely, the projection of $\mathcal{C}$ onto   the parameter space of  the period $\mathsf{p}$ is unbounded, or the projection  onto the parameter space of $\alpha_m$   is not contained in any compact subset of the interval $(0,\,\frac{1}{c})$. We remark that in the previous case, there exists a sequence of periodic solutions with periods going to $\infty$. From (\ref{abs-global-hopf}), system~(\ref{SDDE-hes1}) can be represented as
\begin{align*}
\frac{2\pi}{\mathsf{p}}\frac{\mathrm{d}x}{\mathrm{d}t}=\mathscr{N}_0(x,\,\alpha_m,\,
{\mathsf{p}}),\,\mathsf{p}>0,
\end{align*}
where $x$ is normalized to be $2\pi$-periodic. Notice from the definition of $\mathscr{N}_0$ at (\ref{N-0-definition}) that $\mathsf{p}$ appears only in the time domain of $\mathscr{N}_0$.   Note also that the periodic solutions are uniformly bounded with $\alpha_m\in (0,\,\frac{1}{c})$. Then with $\mathsf{p}\rightarrow\infty$, this alternative implies the possibility that the system has a sequence of nonconstant periodic solutions with the limiting profile satisfying the algebraic equation $\mathscr{N}_0(x,\,\alpha_m,\,\mathsf{p})=0.$ See (\cite{mallet1992boundary},\cite{mallet1996boundary},\cite{mallet2003boundary}) for a discussion of limiting profiles for differential equations with state-dependent delays.

If the projection of $\mathcal{C}$ onto the parameter space of  the period $\mathsf{p}$ is bounded, we have the latter alternative that the projection of $\mathcal{C}$ onto the parameter space of  the period $\alpha_m$ is not  contained in any compact subset of the interval $(0,\,\frac{1}{c})$. Since $\mathcal{C}$ will not cross the hyperplane $\alpha_m=0$, and will not blow up at $\alpha_m=\frac{1}{c}$ with the boundedness of the solutions and periods, $\mathcal{C}$  must cross the hyperplane  $\alpha_m=\frac{1}{c}$ leaving the solutions at $\alpha_m\geq \frac{1}{c}$ out of the scope of the discussion.

We also remark that the state-dependent delay in system~(\ref{SDDE-general}) may be negative or positive and is not {\it a priori} advanced or retarded type delay differential equations. It remains open to investigate this type of systems  in  general settings for a qualitative theory including existence and uniqueness of solutions. For systems with mixed type constant delays, see, among many others, \cite{Rustichini, mallet1999mixed}.
 \bibliographystyle{siam}

\end{document}